\newcommand{\vertex}[3]{\node [vertex] (#1) at (#2, #3 * 1.7) {};
\node at (#2, #3 * 1.7) {$\alpha_{#1}$};}
\newcommand{\edge}[5]{\draw[->] (#2) edge [edge, color=#4, #5] node[midway,
fill=white,inner sep=1]{$x_{#1}$} (#3);}
\newcommand{\edgeone}[3]{\edge{1}{#1}{#2}{black}{#3}}
\newcommand{\edgetwo}[3]{\edge{2}{#1}{#2}{red}{#3}}
\newcommand{\edgethr}[3]{\edge{3}{#1}{#2}{blue}{#3}}
\newcommand{\edgefou}[3]{\edge{4}{#1}{#2}{green}{#3}}
\algrenewcommand{\algorithmiccomment}[1]{\hfill[{\it #1}]}
\newcommand{\libsemigroups}{{\sc libsemigroups}\xspace}
\newcommand{\N}{\mathbb{N}}
\newcommand{\M}{\mathcal{M}}
\newcommand{\stab}{\operatorname{Stab}}
\newcommand{\sym}{\operatorname{Sym}}
\newcommand{\id}{\operatorname{id}}
\newcommand{\im}{\operatorname{im}}
\newcommand{\proj}{\operatorname{Proj}}
\renewcommand{\ker}{\operatorname{ker}}
\newcommand{\dom}{\operatorname{dom}}
\renewcommand{\P}{\mathcal{P}}
\renewcommand{\L}{\mathscr{L}}
\renewcommand{\H}{\mathscr{H}}
\newcommand{\D}{\mathscr{D}}
\newcommand{\R}{\mathscr{R}}
\newcommand{\J}{\mathscr{J}}
\newcommand{\K}{\mathcal{K}}
\newcommand{\ov}[1]{\overline{#1}}
\newcommand{\set}[2]{\{#1:#2\}}
\newcommand{\bn}{\mathbf{n}}
\newcommand{\genset}[1]{\langle#1\rangle}
\newcommand{\GAP}{{\sc GAP}\xspace}
\newcommand{\Orb}{{\sc Orb} }
\newcommand{\Semigroups}{{\sc Semigroups} }
\renewcommand{\to}{\longrightarrow}
\newcommand{\AND}{\quad\text{and}\quad}
\renewcommand{\iff}{\quad\text{if and only if}\quad}
\renewcommand{\l}[1]{\mu_{#1}}
\renewcommand{\r}[1]{\nu_{#1}}
\numberwithin{equation}{section} 
\newtheorem{thm}[equation]{Theorem}
\newtheorem{prop}[equation]{Proposition}
\newtheorem{lem}[equation]{Lemma}
\newtheorem{cor}[equation]{Corollary}
\newtheorem{defn}[equation]{Definition}
\newtheorem{exam}[equation]{Example}
\newtheorem*{assumption}{Assumptions}
\newenvironment{ass}{\begin{assumption}\rm}{\end{assumption}}
\begin{document}
\title{Computing finite semigroups}
\author{J. East, A. Egri-Nagy, J. D. Mitchell, and Y. P\'eresse}
\maketitle

\begin{abstract}
  Using a variant of Schreier's Theorem, and the theory of Green's relations,
  we show how to reduce the computation of an arbitrary subsemigroup of a
  finite regular semigroup to that of certain associated subgroups. Examples of
  semigroups to which these results apply include many important classes:
  transformation semigroups, partial permutation semigroups and inverse
  semigroups, partition monoids, matrix semigroups, and subsemigroups of finite
  regular Rees matrix and $0$-matrix semigroups over groups.  For any
  subsemigroup of such a semigroup, it is possible to, among other things,
  efficiently compute its size and Green's relations, test membership,
  factorize elements over the generators, find the semigroup generated by the
  given subsemigroup and any collection of additional elements, calculate the
  partial order of the $\D$-classes, test regularity, and determine the
  idempotents. This is achieved by representing the given subsemigroup without
  exhaustively enumerating its elements. It is also possible to compute the
  Green's classes of an element of such a subsemigroup without determining the
  global structure of the semigroup.  
\end{abstract}

\tableofcontents
\listofalgorithms
\listoffigures


\section{Introduction}

A \textit{semigroup} is a set with an associative binary operation.  There are
many articles in the literature concerned with the idea of investigating
semigroups using a computer; early examples are \cite{Cannon1969aa,
Cannon1971aa, Forsythe1955aa, Jurgensen1977aa, Perrot1972aa}. There are also
several examples of software packages specifically for computing with
semigroups, such as {\sf AUTOMATE} \cite{Champarnaud1991aa}, {\sf Monoid}
\cite{Linton1997aa}, {\sf SgpWin} \cite{McAlister2006aa}, {\sf Semigroupe}
\cite{Pin2009aa}, and more general computational algebra systems, such as {\sf
Magma} \cite{Bosma1997aa}, \GAP \cite{GAP4}, and {\sf Sage} \cite{Stein2013aa},
with some functionality relating to semigroups; see also \cite{Cousineau1973aa}.

Semigroups are commonly represented either by presentations (abstract generators
and defining relations) or by a generating set consisting of a specific type of
element, such as transformations, matrices, or binary relations. In this paper,
we are solely concerned with semigroups represented by a generating set. 

Computing with semigroups defined by generators or with finitely presented
semigroups is \textbf{hard}; it is shown in \cite{Beaudry1988aa} that testing
membership in a finite commutative transformation semigroup is NP-complete, and
it is well-known that determining any `sensible' property of a finitely
presented semigroup is undecidable by the famous results of Post
\cite{Post1947aa} and Markov \cite{Markov1947aa}. However, in spite of the fact
that the general case is hard, it is still possible to compute with semigroups
efficiently in many particular instances.  Perhaps more importantly, it is
possible to perform calculations using a computer that it would be impossible
(several times over) to do by hand.  

Algorithms, and their implementations, for computing semigroups defined by a
generating set fall into two classes: those that exhaustively enumerate the
elements, and those that do not.  Examples of the first type are the algorithms
described in \cite{Froidure1997aa} and implemented in {\sf Semigroupe}
\cite{Pin2009aa}, and those in {\sf SgpWin} \cite{McAlister2006aa}.
Exhaustively enumerating and storing the elements of a semigroup quickly
becomes impractical.  To illustrate, a \textit{transformation} is a function from
the set $\{1,\dots, n\}$ to itself for some $n\in \N$. A semigroup whose
elements are transformations and whose operation is composition of functions is
called a \textit{transformation semigroup}.   For example, if each of the
$9^9=387420489$ transformations on a $9$-element set is stored as a tuple of
$9$ integers from $1$ to $9$ and $\alpha$ is the number of bits required to
store such an integer, then 
$$9^{9+1}\cdot \alpha=3486784401\cdot\alpha \text{ bits}$$ 
are required to store these transformations. In \GAP, for example, such an
integer requires 16-bits, and so approximately $6$ gigabytes of memory would be
required in this case. Therefore if we want to exhaustively enumerate a
semigroup, then we must be happy to do so with relatively small semigroups.
Exhaustive algorithms have the advantage that they are relatively
straightforward to apply; if the multiplication of a class of semigroups can be
defined to a computer, then these algorithms can be applied.  For
example, in  {\sf Semigroupe} \cite{Pin2009aa} it is possible to compute with
semigroups of transformations, partial transformations, and
several types of matrix semigroups including boolean matrices. 

Non-exhaustive algorithms are described in
\cite{Konieczny1994aa,Lallement1990aa,Linton1998aa,Linton2002aa}, and the
latter were implemented in the {\sf Monoid} package \cite{Linton1997aa} for
\GAP 3 and its later incarnations in \GAP 4. In examples where it is not
possible to store the elements, these methods can be used to determine
structural information about a semigroup, such as its size and Green's
relations (see Section~\ref{section-prerequisites} for the relevant
definitions). In many examples, the non-exhaustive algorithms have better
performance than their exhaustive analogues. However, on the down side, the
non-exhaustive algorithms described in
\cite{Lallement1990aa,Linton1998aa,Linton2002aa} only apply to transformation
semigroups. The methods in \cite{Konieczny1994aa} are analogues of the methods
in \cite{Lallement1990aa} in the context of semigroups of binary relations;
but an implementation does not appear to be readily available.

To one degree or another, the articles
\cite{Beaudry1988aa,Konieczny1994aa,Lallement1990aa,Linton1998aa,Linton2002aa}
use variants of Schreier's Theorem \cite[Theorem 2.57]{B.-Eick2004aa} and the
theory of Green's relations to reduce the computation of the semigroup to that
of its Sch\"utzenberger groups.  It is then possible to use the well-developed
and efficient algorithms from Computational Group Theory
\cite{B.-Eick2004aa,Seress2003ab,Sims1970aa}, stemming from the Schreier-Sims
Algorithm, to compute with these subgroups.  In this paper, we go one step
further by giving a computational paradigm for arbitrary subsemigroups of
finite regular semigroups.  Semigroups to which the paradigm can be efficiently
applied include many of the most important classes: transformation semigroups,
partial permutation semigroups
and inverse semigroups, partition monoids, matrix semigroups, and subsemigroups
of finite regular Rees matrix and $0$-matrix semigroups. We generalise and
improve the central notions in \cite{Lallement1990aa,Linton1998aa,Linton2002aa}
from transformation semigroups to arbitrary subsemigroups of an arbitrary finite
regular semigroup.  For such a subsemigroup, it is possible to efficiently
compute its size and Green's classes, test membership, factorize elements over
the generators, find the semigroup generated by the given subsemigroup and any
collection of additional elements, calculate the partial order of the
$\D$-classes, test regularity, and determine the idempotents. This is achieved
by representing the given subsemigroup without exhaustively enumerating its
elements. In particular, our methods can be used to determine properties of
semigroups, where it is not possible to store every element of that semigroup.
It is also possible to compute the Green's classes of an element of such a
subsemigroup without determining the global structure of the
semigroup. 

Although not described here, it is also possible to use the data structures
provided to find the group of units (if it exists), minimal ideal, find a small
generating set, and test if a semigroup satisfies various properties such as
being simple, completely regular, Clifford and so on.  The algorithms described
in this paper are implemented in their full generality in the \GAP \cite{GAP4}
package \Semigroups \cite{Mitchell2016aa}, which is open source software.

The analogue of Cayley's Theorem for semigroups states that every finite
semigroup is isomorphic to a transformation semigroup.  Consequently, it could
be argued that it is sufficient to have computational tools available for
transformation semigroups only. An analogous argument could be made for
arbitrary groups with respect to permutation groups, but developments in
computational group theory suggest otherwise.  For example, the Matrix Group
Recognition Project has produced efficient algorithms for computing with groups
of matrices over finite fields; \cite{MGRP} and \cite{Leedham-Green2001aa}.
Similarly, for some classes of semigroups, such as subsemigroups of the
partition monoid or a Rees matrix semigroup, the only known faithful
transformation representations are those that act on the elements of the
semigroup itself. Hence, it is necessary in such examples to exhaustively
compute the elements of the semigroup before a transformation representation is
available.  At this point any transformation representation, and the
non-exhaustive methods that could be applied to it, are redundant. Therefore,
to compute with such semigroups without exhaustively enumerating them, it is
necessary to have non-exhaustive algorithms that apply directly to the given
semigroups and, in particular, do not require a transformation representation.
It is such algorithms that we present in this paper. 

When considering matrix semigroups, it is straightforward to determine a
transformation representation. Even in these cases, it is sometimes preferable
to compute in the native matrix representation: in particular, where there are
methods for matrix groups that are more efficient than computing a
permutation representation.

The algorithms in this paper only apply to subsemigroups of a regular
semigroup.  However, it would be possible to modify several of these
algorithms, including the main one (Algorithm~\ref{algorithm-enumerate}), so
that they apply to subsemigroups $S$ of a non-regular semigroup $U$. In
particular, if it were possible to determine whether a given element was
regular or not in $U$, then we could use the data structure for $\R$-classes
described in Section~\ref{section-algorithms} for the regular elements, and
perform an exhaustive enumeration of $\R$-classes of non-regular elements
in $U$. Or alternatively, it might be possible to use a combination of the
approaches described in this paper and those in \cite{Konieczny1994aa,
Lallement1990aa}.  Such an approach would be possible with, say, semigroups of
binary relations.  It is possible to check that a binary relation is regular as
an element of the semigroup of all binary relations in polynomial time; see
\cite{Fitz-Gerald1977aa, Schein1976aa}. However, we did not yet follow this
approach either in the paper or in the \Semigroups package since it is
relatively easy to find a transformation representation of a semigroup of
binary relations. 

This paper is organised as follows.  In Section~\ref{section-prerequisites}, we
recall some well-known mathematical notions, and establish some notation that
is required in subsequent sections.  In Section~\ref{section-paradigm}, we
provide the mathematical basis that proves the validity of the algorithms
presented in Sections~\ref{section-algorithms}.  In
Section~\ref{section-special-cases}, we show that transformation semigroups,
partial permutation semigroups and inverse semigroups, partition monoids,
semigroups of matrices over a finite field, and subsemigroups of finite regular
Rees matrix or $0$-matrix semigroups satisfy the conditions from
Section~\ref{section-paradigm} and, hence, belong to the class of semigroups
with which we can compute efficiently.  Detailed algorithms are presented as
pseudocode in Section~\ref{section-algorithms}, including some remarks about
how the main algorithms presented can be simplified in the case of regular and
inverse semigroups.  In Section~\ref{section-examples} we give several detailed
examples. The final section, Section~\ref{section-benchmarks} is devoted to a
discussion of the performance of the algorithms presented herein, and a
comparison with the algorithm described in~\cite{Froidure1997aa}.


\section{Mathematical prerequisites}\label{section-prerequisites}

A semigroup $S$ is \textit{regular} if for every $x\in S$ there exists $x'\in S$
such that $xx'x=x$. A semigroup $S$ is a \textit{monoid} if it has an identity
element, i.e.\ an element $e\in S$ such that $es=se=s$ for all $s\in S$. 
If $S$ is a semigroup, we write $S^1$ for the monoid
obtained by adjoining an identity $1_S\not\in S$ to $S$. Note that with this
definition $S ^ 1$ always contains one more element ($1_S$) than $S$ 
regardless of whether or not $S$ has an existing identity element. 

For any set $\Omega$, the set $\Omega^\Omega$ of transformations of $\Omega$ is
a semigroup under composition of functions, known as the \textit{full
transformation monoid on $\Omega$}.  The identity element of $\Omega^\Omega$
is the identity function on $\Omega$, which will be denoted $\id_\Omega$.  We
denote the full transformation monoid on the finite set
$\{1,\ldots, n\}$ by $T_n$.  Throughout this article, we will write functions
to the right of their arguments and compose functions from left to right.

A \textit{subsemigroup} $T$ of a semigroup $S$ is just a subset which is a
semigroup under the same operation as $S$; we denote this by $T\leq S$. 
If $X$ is a subset of a semigroup $S$, then the least subsemigroup of $S$
containing $X$ is denoted by $\genset{X}$; this is also referred to as the
\textit{subsemigroup generated by $X$}.  We denote the cardinality of a set $X$ by
$|X|$.

Let $S$ be a semigroup and let $x,y\in S$ be arbitrary. We say that $x$ and $y$
are $\L$-related if the principal left ideals generated by $x$ and $y$ in $S$
are equal; in other words, $S^1x=S^1y$. We write $x\L y$ to denote that $x$ and
$y$ are $\L$-related. In Section~\ref{section-paradigm}, we often want to
distinguish between the cases when elements are $\L$-related in a semigroup $U$
or in a subsemigroup $S$ of $U$.  We write $x\L^Sy$ or $x\L^Uy$ to
differentiate these cases.

Green's $\R$-relation is defined dually to Green's $\L$-relation; Green's
$\H$-relation is the meet, in the lattice of equivalence relations on $S$, of
$\L$ and $\R$; and $\D$ is the join. We will refer to the equivalence classes
as $\mathscr{K}$-classes where $\mathscr{K}$ is any of $\R$, $\L$, $\H$, or
$\D$, and the $\mathscr{K}$-class of $x\in S$ will be denoted by $K_x$, or
$K_x^S$ if it is necessary to explicitly refer to the semigroup where the
relation is defined. We denote the set of $\mathscr{K}$-classes of a semigroup
$S$ by $S/\mathscr{K}$.

If $S$ is a semigroup and $x, y\in S$, then $x$ and $y$ are $\J$-related if
they generate the same principal two-sided ideal of $S$, i.e.\ $S ^ 1 x S ^ 1 =
S ^ 1 y S ^ 1$.  In a finite semigroup, $x\J y$ if and only if $x\D y$.
Containment of principal ideals induces a partial order on the $\J$-classes of
$S$ (and hence $\D$-classes if $S$ is finite), sometimes denoted $\leq_{\J}$;
that is, $J_x\leq_{\J}J_y$ if and only if $S^1xS^1\subseteq S^1yS^1$. 


\begin{prop}[cf. Proposition A.1.16 in \cite{Rhodes2009aa}]
  Let $U$ be a semigroup and let $S$ be a subsemigroup of $U$.  Suppose that
  $x$ and $y$ are regular elements of $S$. Then $x\mathscr{K}^Uy$ if and only
  if $x\mathscr{K}^Sy$, where $\mathscr{K}$ is any of $\R$, $\L$ or $\H$.
\end{prop}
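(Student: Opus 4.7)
The plan is to reduce everything to the $\L$ case and prove the two containments separately, using regularity of $x$ and $y$ only for the harder direction.

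First I would dispose of the easy direction $x\L^S y \Rightarrow x\L^U y$. If $S^1x=S^1y$, then there exist $s,t\in S^1$ with $x=sy$ and $y=tx$; viewed as equalities in $U$, these give $x\in U^1y$ and $y\in U^1x$, so $U^1x=U^1y$, i.e.\ $x\L^U y$. No regularity is required for this direction, and the same argument handles $\R$ and $\H$.

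For the converse $x\L^U y \Rightarrow x\L^S y$, suppose $x=uy$ and $y=vx$ for some $u,v\in U^1$. Since $y$ is regular in $S$, choose $y'\in S$ with $y=yy'y$. Then
\[
x \;=\; uy \;=\; u(yy'y) \;=\; (uy)y'y \;=\; xy'y,
\]
and $xy'\in S$ because $x,y'\in S$. Hence $x\in S^1y$. A symmetric computation using $x=xx'x$ for some $x'\in S$ gives $y=yx'x\in S^1x$. Together these yield $S^1x=S^1y$, so $x\L^Sy$. The dual argument, using $x=xx'x$ and $y=yy'y$ to absorb the ``bad'' multipliers on the right, handles $\R$.

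Finally, for $\H$ I would simply observe that $\H=\L\cap\R$ in any semigroup, so $x\H^Uy$ iff $x\L^Uy$ and $x\R^Uy$, which by the two cases above is equivalent to $x\L^Sy$ and $x\R^Sy$, i.e.\ $x\H^Sy$. The only real obstacle in the whole argument is the trick of inserting the factorization $y=yy'y$ (and dually) to pull an element of $U^1$ into $S$; once that is spotted, everything else is symbolic manipulation.
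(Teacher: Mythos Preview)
Your argument is correct and is the standard proof of this fact. The paper does not actually give a proof of this proposition; it merely cites Proposition A.1.16 of \cite{Rhodes2009aa}, so there is nothing to compare against beyond noting that your absorption trick $x=uy=uyy'y=xy'y$ (and its dual) is exactly the well-known reason why the statement holds.
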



Note that the previous result does not necessarily hold for $\mathscr{K}=\D$.

Let $S$ be a semigroup and let $\Omega$ be a set. A function $\Psi:\Omega\times
S^1\to \Omega$ is a \textit{right action} of $S$ on $\Omega$ if  
\begin{itemize}
  \item $((\alpha, s)\Psi, t)\Psi=(\alpha, st)\Psi$;
  \item $(\alpha, 1_S)\Psi=\alpha$.
\end{itemize}
For the sake of brevity, we will write $\alpha\cdot s$ instead of $(\alpha,
s)\Psi$, and we will say that $S$ acts on $\Omega$ on the right.  \textit{Left
actions} are defined analogously, and we write $s\cdot \alpha$ in this case,
and say $S$ acts on $\Omega$ on the left.  The \textit{kernel} of a function
$f: X\to Y$, where $X$ and $Y$ are any sets, is the equivalence relation
$\set{(x,y)\in X\times X}{(x)f = (y)f}$.  A right action of a semigroup $S$ on
a set $\Omega$ induces a homomorphism from $S$ to the full transformation
monoid on $\Omega$ defined by mapping $s\in S$ to the transformation defined by
$$\alpha\mapsto \alpha \cdot s\quad\text{for all}\quad \alpha\in \Omega.$$ An
action is called \textit{faithful} if the induced homomorphism is injective.
The \textit{kernel of a right action} of a semigroup $S$ on a set $\Omega$ is
just the kernel of the induced homomorphism, i.e.~the equivalence relation
$\set{(s,t)\in S\times S}{\alpha\cdot s = \alpha\cdot t\ (\forall \alpha\in
\Omega)}$. The kernel of a left action is defined analogously.

If $S$ acts on the sets $\Omega$ and $\Omega'$ on the right, then we say that
$\lambda: \Omega\to \Omega'$ is a \textit{homomorphism of right actions} if
$(\alpha \cdot s)\lambda=(\alpha)\lambda\cdot s$ for all $\alpha \in \Omega$
and $s\in S^1$. \textit{Homomorphisms of left actions} are defined analogously.
An \textit{isomorphism} of (left or right) actions is a bijective homomorphism
of (left or right) actions. 

If $\Omega$ is a set, then we denote the set of subsets of $\Omega$ by
$\P(\Omega)$.  If $S$ is a semigroup acting on the right on $\Omega$, then the
action of $S$ on $\Omega$ induces a natural action of $S$ on $\P(\Omega)$,
which we write as:
\begin{equation}\label{equation-induced-action}
  \Sigma\cdot s=\{\alpha\cdot s:\alpha\in \Sigma\}\qquad \text{for each}\
  \Sigma\subseteq \Omega.
\end{equation}
We will denote the function from $\Sigma$ to $\Sigma\cdot s$ defined by
$\alpha\mapsto \alpha\cdot s$ by $s|_{\Sigma}$.  We define the
\textit{stabiliser} of $\Sigma$ under $S$ to be
$$\stab_S(\Sigma)=\set{s\in S^1}{\Sigma\cdot s=\Sigma}.$$
The quotient of the stabiliser by the kernel of its action on $\Sigma$,
i.e.~the congruence 
$$\set{(s,t)}{s,t\in\stab_S(\Sigma),\ s|_\Sigma=t|_\Sigma},$$
is isomorphic to
$$S_{\Sigma}=\set{s|_{\Sigma}}{s\in \stab_S(\Sigma)}$$
which in the case that $\Sigma$ is finite, is a subgroup of the symmetric group
$\sym(\Sigma)$ on $\Sigma$. The stabiliser $S_{\Sigma}$ can also be seen as a
subgroup of $\sym(\Omega)$ by extending the action of its elements so that they
fix $\Omega\setminus \Sigma$ pointwise.  It is immediate that $s|_{\Sigma}\cdot
t|_{\Sigma}=(st)|_{\Sigma}$ for all $s,t\in \stab_S(\Sigma)$. 

If $S$ acts on $\Omega$, the \textit{strongly connected component} (usually
abbreviated to s.c.c.) of an element $\alpha\in\Omega$ is the set of all
$\beta\in\Omega$ such that $\beta=\alpha\cdot s$ and $\alpha=\beta\cdot t$ for
some $s,t\in S^1$. We write $\alpha\sim \beta$ if $\alpha$ and $\beta$ belong to
the same s.c.c.~and the action is clear from the context. 

If $\alpha\in\Omega$, then 
$$\alpha \cdot S ^ 1 = \set{\alpha\cdot s}{s\in S ^ 1},$$
is a disjoint union of strongly connected components of the action of $S$. Note
that $\alpha\cdot S ^ 1$ might consist of more than one strongly connected
component.  If $S$ is a group, then $\alpha \cdot S ^ 1$ has only one
strongly connected component, which is usually called the \textit{orbit} of
$\alpha$ under $S$.  


\begin{prop}\label{prop-technical}
  Let $S=\genset{X}$ be a semigroup that acts on a finite set $\Omega$ on the
  right and let $\Sigma_1, \ldots, \Sigma_n\subseteq \Omega$ be the elements of
  a strongly connected component of the action of $S$ on $\P(\Omega)$. Then the
  following hold:
  \begin{enumerate}[\rm (a)]

    \item if $\Sigma_1\cdot u_i= \Sigma_i$ for some $u_i\in S^1$, then
      there exists $\ov{u_i}\in S^1$ such that $\Sigma_i\cdot \ov{u_i}= \Sigma_1$,
      $(u_i\ov{u_i})|_{\Sigma_1}=\id_{\Sigma_1}$, and
      $(\ov{u_i}u_i)|_{\Sigma_i}=\id_{\Sigma_i}$;

    \item $S_{\Sigma_i}$ and $S_{\Sigma_j}$ are conjugate subgroups of
      $\sym(\Omega)$ for all $i,j\in \{1,\ldots, n\}$;

    \item if $u_1=\ov{u_1}=1_S$ and $u_i, \ov{u_i}\in S$ are as in part (a)
      for $i>1$, then $S_{\Sigma_1}$ is generated by
      $$\{(u_ix\ov{u_j})|_{\Sigma_1}:1\leq i,j\leq n,\ x\in X,\ \Sigma_i\cdot
      x=\Sigma_j\}.$$
  \end{enumerate}
\end{prop}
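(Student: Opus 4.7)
For part (a), my plan is to exploit that $\Sigma_i$ lies in the same s.c.c.~as $\Sigma_1$: there exists some $v \in S^1$ with $\Sigma_i \cdot v = \Sigma_1$, hence $u_i v \in \stab_S(\Sigma_1)$, and $(u_i v)|_{\Sigma_1}$ is a surjection of the finite set $\Sigma_1$ to itself, forcing it to be a permutation of finite order, say $k$. I would then set $\ov{u_i} := v(u_i v)^{k-1}$ (and $\ov{u_1} := 1_S$). Direct computation gives $\Sigma_i \cdot \ov{u_i} = \Sigma_1 \cdot (u_i v)^{k-1} = \Sigma_1$ and $u_i \ov{u_i} = (u_i v)^k$, which restricts to $\id_{\Sigma_1}$. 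For the third identity, I would observe that the first forces $u_i|_{\Sigma_1}: \Sigma_1 \to \Sigma_i$ to be injective, and it is surjective by hypothesis, so it is a bijection; its two-sided inverse must therefore coincide with $\ov{u_i}|_{\Sigma_i}$, yielding $(\ov{u_i} u_i)|_{\Sigma_i} = \id_{\Sigma_i}$.

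For part (b), the bijection from (a) gives $|\Sigma_1| = |\Sigma_i|$, so I can extend $u_i|_{\Sigma_1}$ to a permutation $\pi \in \sym(\Omega)$ by choosing any bijection $\Omega \setminus \Sigma_1 \to \Omega \setminus \Sigma_i$; note that $\pi^{-1}|_{\Sigma_i} = \ov{u_i}|_{\Sigma_i}$ by (a). For any $s \in \stab_S(\Sigma_1)$, a direct check shows that $\pi^{-1} s|_{\Sigma_1} \pi$ agrees with $(\ov{u_i} s u_i)|_{\Sigma_i}$ on $\Sigma_i$ and fixes $\Omega \setminus \Sigma_i$ pointwise, so it lies in $S_{\Sigma_i}$. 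The symmetric computation, using the inverse bijection $\ov{u_i}|_{\Sigma_i}$, shows $\pi S_{\Sigma_i} \pi^{-1} \subseteq S_{\Sigma_1}$, giving $\pi$-conjugacy; conjugacy of $S_{\Sigma_i}$ with $S_{\Sigma_j}$ then follows by transitivity through $\Sigma_1$.

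For part (c), I would adapt Schreier's generator argument. Writing $s \in \stab_S(\Sigma_1) \setminus \{1_S\}$ as a word $s = x_{j_1} \cdots x_{j_m}$ in $X$, set $\Sigma_{i_k} := \Sigma_1 \cdot (x_{j_1} \cdots x_{j_k})$; each such $\Sigma_{i_k}$ lies in the s.c.c.~of $\Sigma_1$ because the suffix $x_{j_{k+1}} \cdots x_{j_m}$ returns it to $\Sigma_1$, so it equals one of $\Sigma_1, \ldots, \Sigma_n$. Inserting $\ov{u_{i_k}} u_{i_k}$ between successive letters does not change the value on $\Sigma_1$, since by (a) each insertion restricts to the identity on the intermediate subset $\Sigma_{i_k}$; with the conventions $u_{i_0} = \ov{u_{i_m}} = 1_S$, this telescopes $s|_{\Sigma_1}$ into the product
$$(u_{i_0} x_{j_1} \ov{u_{i_1}})|_{\Sigma_1} \cdot (u_{i_1} x_{j_2} \ov{u_{i_2}})|_{\Sigma_1} \cdots (u_{i_{m-1}} x_{j_m} \ov{u_{i_m}})|_{\Sigma_1},$$
and each factor stabilises $\Sigma_1$ (since $\Sigma_1 \cdot u_{i_{k-1}} = \Sigma_{i_{k-1}}$, $\Sigma_{i_{k-1}} \cdot x_{j_k} = \Sigma_{i_k}$, and $\Sigma_{i_k} \cdot \ov{u_{i_k}} = \Sigma_1$), hence is a generator of the claimed form.

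The main obstacle will be part (a), specifically securing $(\ov{u_i} u_i)|_{\Sigma_i} = \id_{\Sigma_i}$: this depends crucially on the finiteness of $\Omega$ to promote one-sided invertibility on $\Sigma_1$ to an honest bijection between $\Sigma_1$ and $\Sigma_i$. Once (a) is in place, parts (b) and (c) are routine—a conjugation by an extended bijection and a telescoping of a word in the generators, respectively.
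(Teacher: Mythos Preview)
Your proof is correct. The paper takes a very different route: rather than arguing directly, it observes that the action of $S$ on $\Omega$ induces a homomorphism $\theta:S\to\Omega^\Omega$ onto a transformation semigroup $(S)\theta$ with the same action on $\Omega$, and then simply cites Lemma~2.2 and Theorems~2.1 and~2.3 of Linton--Pfeiffer--Robertson--Ru\v{s}kuc~\cite{Linton1998aa} for parts (a), (b), and (c) respectively. In other words, the paper treats this proposition as a known fact for transformation semigroups, transported to the abstract setting via $\theta$.

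Your argument, by contrast, is a self-contained proof that re-derives the Linton et al.\ results from first principles. The core moves---forcing $(u_iv)|_{\Sigma_1}$ to have finite order by finiteness of $\Sigma_1$, extending the bijection $u_i|_{\Sigma_1}$ to a conjugating element of $\sym(\Omega)$, and the telescoping insertion of $\ov{u_{i_k}}u_{i_k}$ to rewrite a word in $\stab_S(\Sigma_1)$ as a product of Schreier generators---are exactly the ideas one would expect to find in the cited source, so the mathematical content is ultimately the same. What you gain is independence from the reference; what the paper gains is brevity and a clean separation of the ``reduce to transformations'' step from the combinatorial work already done elsewhere.
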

\begin{proof} 
  Let $\theta: S\to \Omega^\Omega$ be the homomorphism induced by the action of
  $S$ on $\Omega$.  Then $(S)\theta$ is a transformation semigroup and the actions
  of $(S)\theta$ and $S$ on $\Omega$ are equal. Hence (a), (b), and (c) are just
  Lemma 2.2 and Theorems 2.1 and 2.3 in~\cite{Linton1998aa}, respectively.
\end{proof}\smallskip 


We will refer to the generators of $S_{\Sigma_1}$ in
Proposition~\ref{prop-technical}(c) as \textit{Schreier generators} of
$S_{\Sigma_1}$, due to the similarity of this proposition and Schreier's
Theorem \cite[Theorem 2.57]{B.-Eick2004aa}.  If $S$ is a semigroup acting on a
set $\Omega$, if $\Sigma, \Gamma\subseteq \Omega$ are such that $\Sigma\sim
\Gamma$ and if $u\in S$ is any element such that $\Sigma \cdot u = \Gamma$,
then we write $\ov{u}$ to denote an element of $S$ with the properties in 
Proposition~\ref{prop-technical}(a).

The analogous definitions can be made, and an analogous version of
Proposition~\ref{prop-technical} holds for left actions.  In the next section
there are several propositions involving left and right actions in the same
statement, and so we define the following notation for left actions.  If $S$ is
a semigroup acting on the left on a set $\Omega$ and $\Sigma\subseteq \Omega$,
then the induced left action of $S$ on $\P(\Omega)$ is defined analogously to
\eqref{equation-induced-action}; the function $\alpha\mapsto s\cdot \alpha$ is
denoted by ${}_{\Sigma}|s$; and we define
$${}_{\Sigma}S=\set{{}_{\Sigma}|s}{s\cdot \Sigma=\Sigma}$$
and in the case that $\Omega$ is finite, ${}_{\Sigma}S\leq \sym(\Omega)$.


\section{From transformation semigroups to arbitrary regular semigroups}
\label{section-paradigm}

In this section, we will generalise the results of \cite{Linton1998aa} from
transformation semigroups to subsemigroups of an arbitrary finite regular
semigroup. 

Suppose that $U$ is a regular semigroup whose Green's structure is known in
the sense that it is possible to efficiently verify if two elements are
$\L^U$-, $\R^U$-, $\H^U$-, or $\D^U$-related.  For instance, $U$ might be the
full transformation monoid or the symmetric inverse monoid on a finite set; the
Green's relations of which are described in
Propositions~\ref{prop-greens-full-trans}
and~\ref{prop-greens-symmetric-inverse}, respectively.  In this paper, the
central notion is to use our knowledge of the Green's structure of $U$ to
produce algorithms to compute any subsemigroup $S$ of $U$ specified by a
generating set.  Roughly speaking, the subsemigroup $S$ can be decomposed into
its $\R$-classes, and an $\R$-class can be decomposed into the stabiliser $S_L$
(under right multiplication on $\P(U)$) of an $\L^U$-class $L$ and the
s.c.c.~of $L$ under the action of $S$ on the $U/\L^U$.  Decomposing $S$ in this
way will permit us to efficiently compute many aspects of the structure of $S$
without enumerating its elements exhaustively. For instance, using this
decomposition, we can test membership in $S$, compute the size, Green's
structure, idempotents, elements, and maximal subgroups of $S$.

Throughout the remainder of this section we suppose that $U$ is an arbitrary
finite semigroup, and $S$ is a subsemigroup of $U$.  


\subsection{Actions on Green's classes}

We require the following actions of $S$: the action on $\P(U)$ induced
by multiplying elements of $U$ on the right by $s\in S^1$, i.e.:
\begin{equation}\label{equation-action-on-U}
  As=\set{as}{a\in A}  
\end{equation}
where $s\in S^1$ and $A\subseteq U$; and the action of $S$ on $U/\L^U$ defined by 
\begin{equation}\label{equation-action-on-L}
  L_x\cdot s=L_{xs}
\end{equation}
for all $x\in U$ and $s\in S^1$. The latter defines an action since $\L^U$ is a
right congruence.  

In general, the actions defined in \eqref{equation-action-on-U} and
\eqref{equation-action-on-L} are not equal when restricted to $U/\L^U$. For
example, it can be the case that $L_x s\subsetneq L_{xs}$ and, in
particular, $L_x s\not\in U/\L^U$.  However, the actions do coincide in one case,
as described in the next lemma, which is particularly important here. 


\begin{lem}\label{lem-equiv-actions}
  Let $x, y \in U$ be arbitrary. Then $L_x, L_y\in U/\L^U$ belong to the same
  s.c.c.\ of the action of $S$ defined by \eqref{equation-action-on-L} if and
  only if $L_x$ and $L_y$ belong to the same s.c.c.~of the action of $S$
  defined by \eqref{equation-action-on-U}.
\end{lem}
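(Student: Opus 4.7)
The plan is to prove the two implications separately, with the reverse direction being essentially immediate and the forward direction requiring a short finiteness/congruence argument.

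For the backward direction, suppose $L_x$ and $L_y$ lie in the same s.c.c.\ under the action of \eqref{equation-action-on-U}, so $L_x s = L_y$ and $L_y t = L_x$ as subsets of $U$, for some $s,t \in S^1$. Then $xs \in L_x s = L_y$, hence $xs \L^U y$, so $L_{xs} = L_y$; symmetrically $L_{yt} = L_x$. This is exactly the statement that $L_x$ and $L_y$ lie in the same s.c.c.\ under \eqref{equation-action-on-L}.

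For the forward direction, assume $L_{xs} = L_y$ and $L_{yt} = L_x$ for some $s,t \in S^1$. Since $\L^U$ is a right congruence, for any $a \in L_x$ we have $as \L^U xs$, hence $as \in L_{xs} = L_y$; thus $L_x s \subseteq L_y$, and symmetrically $L_y t \subseteq L_x$. The key step is to upgrade these inclusions to equalities. From $L_{xst} = L_{yt} = L_x$, we get $x \L^U xst$, i.e.\ $U^1 x = U^1 x st$. Now pick any $a \in L_x$; since $a \in U^1 x = U^1 xst$, we can write $a = u \cdot xst$ for some $u \in U^1$. Let $a' = ux$. Then $a' \in U^1 x$ gives $U^1 a' \subseteq U^1 x$, while $a = a' st \in U^1 a'$ combined with $a \L^U x$ gives $U^1 x = U^1 a \subseteq U^1 a'$; therefore $a' \L^U x$, so $a' \in L_x$ and $a = a' \cdot st$. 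This shows $L_x \subseteq L_x \cdot st$, and hence $L_x \cdot st = L_x$.

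To conclude, write $L_x = L_x st = (L_x s) t \subseteq L_y t \subseteq L_x$, which forces $L_y t = L_x$ as subsets of $U$. The symmetric argument (swap the roles of $(x,s)$ and $(y,t)$) yields $y \L^U y \cdot ts$, hence $L_y \cdot ts = L_y$, and then $L_y = (L_y t) s \subseteq L_x s \subseteq L_y$, so $L_x s = L_y$. Thus $L_x$ and $L_y$ lie in the same s.c.c.\ of the action \eqref{equation-action-on-U}. The only point requiring a little care is that set inclusions are not automatically equalities: the bridge is the observation that $x \L^U xst$ lets us pull any element of $L_x$ back through the action of $st$, which is the sole place where finiteness of $U$ (implicit in Green's relations on $U/\L^U$) is used.
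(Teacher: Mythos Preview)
Your proof is correct. The backward direction matches the paper's ``trivial'' observation, and your forward direction is a valid direct argument.

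The paper takes a shorter route for the forward direction: it simply invokes Green's Lemma (Howie, Lemma~2.2.1) to conclude that $z\mapsto zs$ is a bijection $L_x\to L_{xs}=L_y$, whence $L_xs=L_y$ as sets (and symmetrically $L_yt=L_x$). Implicit in that citation is the hypothesis $x\R^U xs$, which the paper does not spell out; it follows from stability of the finite semigroup $U$ (from $x\L^U xst$ one gets $x\mathscr{J}^U xs$, and $xs\leq_{\R}x$ then forces $x\R^U xs$). Your argument avoids this detour entirely: you prove $L_x\subseteq L_x\cdot st$ by hand and then squeeze, which is essentially the surjectivity half of Green's Lemma done in place. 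This is more self-contained and, contrary to your closing remark, does not use finiteness of $U$ anywhere---your proof goes through for an arbitrary semigroup $U$.
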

\begin{proof}
  The converse implication is trivial.
  If $L_x, L_y\in U/\L^U$ belong to the same strongly connected component under
  the action \eqref{equation-action-on-L}, then, there exists $s\in S^1$ such that
  $L_{xs}=L_y$. Hence, by Green's Lemma \cite[Lemma 2.2.1]{Howie1995aa}, the
  function from $L_x$ to $L_{xs}=L_y$ defined by $z\mapsto zs$ for all $z\in
  L_x$ is a bijection and so $L_x\cdot s=L_{xs}=\set{ys}{y\in L_x}$.
\end{proof}


Although $S$ does not, in general, act on the $\L$-classes of $U$ by right
multiplication, by Lemma~\ref{lem-equiv-actions}, it does act by right
multiplication on the set of $\L^U$-classes within a strongly connected component
of its action.  We will largely be concerned with the strongly connected
components of the restriction of the action on $\P(U)$ in
\eqref{equation-action-on-U} to $U/\L^U$.  In this context, by
Lemma~\ref{lem-equiv-actions}, we may, without ambiguity, use the
actions defined in \eqref{equation-action-on-U} and
\eqref{equation-action-on-L} interchangeably. 

If $x, y\in S$, then we write $L_x^U\sim L_y^U$ and $R_x^U\sim R_y^U$ to denote
that these $\L^U$- and $\R^U$-classes belong to the same strongly connected
component of an action defined in \eqref{equation-action-on-U} or
\eqref{equation-action-on-L}.
We will make repeated use of the following lemma later in this section. 

\begin{lem} \label{lem-greens-simple}
  Let $x\in S$ and $s,t\in S^1$ be arbitrary. Then:
  \begin{enumerate}[\rm (a)] 

    \item
      $L_x ^ U\sim L_{xs}^ U$ if and only if $x\R^S xs$;

    \item
      $R_x ^ U\sim R_{tx}^ U$ if and only if $x \L^S tx$;

    \item
       $L_x ^ U\sim L_{xs}^ U$ and $R_x ^ U\sim R_{tx}^ U$ together imply that
       $x \D^S txs$. 
  \end{enumerate}
\end{lem}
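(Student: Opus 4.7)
The plan is to prove (a) and (b) symmetrically, then obtain (c) by a short general argument.

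For (a), the direction $(\Leftarrow)$ is immediate: if $xs\cdot a = x$ for some $a\in S^1$, then $L_x\cdot s = L_{xs}$ and $L_{xs}\cdot a = L_x$ by the definition of the action in~\eqref{equation-action-on-L}, witnessing the s.c.c.\ relation. For $(\Rightarrow)$, assume $L_x^U$ and $L_{xs}^U$ lie in the same s.c.c., so that there is some $v\in S^1$ with $L_{xs}\cdot v = L_x$. By Lemma~\ref{lem-equiv-actions}, within an s.c.c.\ the action~\eqref{equation-action-on-L} agrees set-theoretically with~\eqref{equation-action-on-U}, so $z\mapsto zs$ is a bijection $L_x\to L_{xs}$ and $z\mapsto zv$ is a bijection $L_{xs}\to L_x$. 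Their composition $z\mapsto z(sv)$ is therefore a permutation of the finite set $L_x$, so some power $(sv)^n$ acts as the identity; in particular $x = x(sv)^n = xs\cdot(vs)^{n-1}v$ with $(vs)^{n-1}v\in S^1$. Combined with $xs = x\cdot s$, this shows $xS^1 = xsS^1$, i.e., $x\R^S xs$.

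Part (b) is dual, using the analogous left-action version of Lemma~\ref{lem-equiv-actions} mentioned at the end of Section~\ref{section-prerequisites}. For (c), recall that Green's $\R$-relation is always a left congruence. Applying (a) to $x$ and $s$ gives $x\R^S xs$, hence $tx\R^S txs$; applying (b) to $x$ and $t$ gives $x\L^S tx$. Linking these, $x\L^S tx\R^S txs$, and since $\D^S$ is the join of $\L^S$ and $\R^S$ in the lattice of equivalences on $S$, this yields $x\D^S txs$.

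The main obstacle is the forward direction of (a): the s.c.c.\ hypothesis initially gives only that $xsv\L^U x$ for some $v\in S^1$, which produces witnesses in $U^1$ rather than in $S^1$, and so is not sufficient on its own for $x\R^S xs$. The combination of Lemma~\ref{lem-equiv-actions} with the finiteness of $L_x$ is precisely what converts this $U^1$-level information into the $S^1$-level equation $xs\cdot a = x$ required.
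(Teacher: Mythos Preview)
Your proof is correct and follows essentially the same route as the paper. The only notable difference is in the forward direction of (a): the paper invokes Proposition~\ref{prop-technical}(a) directly to obtain $\ov{s}\in S^1$ with $xs\ov{s}=x$, whereas you inline that argument by hand, using Lemma~\ref{lem-equiv-actions} and finiteness of $L_x^U$ to extract the required element of $S^1$ as a power of $sv$; for (c), the paper uses that $\L^S$ is a right congruence rather than that $\R^S$ is a left congruence, which is the symmetric version of your step.
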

\begin{proof}
  We prove only parts (a) and (c), since part (b) is the dual of (a).
  
  \textbf{(a).}
  ($\Rightarrow$)
  By assumption, $L_{xs}^U$ and $L_x^U$ belong to the same s.c.c.~of the action
  of $S$ on $U/\L^U$ by right multiplication. Hence, by
  Proposition~\ref{prop-technical}(a), there exists $\ov{s}\in S^1$ such that
  $L_{xs}^U\ov{s}=L_x^U$ and $s\ov{s}$ acts on $L_x^U$ as the
  identity.  Hence, in
  particular, $xs\ov{s}=x$ and so $xs\R^S x$. 

  ($\Leftarrow$)
  Suppose $x\R^S xs$. Then there exists $t\in S^1$ such that
  $xst=x$. It follows that $L_{x}^U\cdot s= L_{xs}^U$
  and $L_{xs}^U\cdot t = L_{x}^U$. Hence
  $L_{x}^U\sim L_{xs}^U$.\smallskip

  \textbf{(c).} 
  Suppose that $L_{x}^U \sim L_{xs}^U$ and $R_{x}^U\sim R_{tx}^U$. Then,
  by parts (a) and (b), $x \R^S xs$ and $x\L^Stx$. The latter implies that
  $xs\L^Stxs$, and so $x\D^S txs$. 
\end{proof}


\subsection{Faithful representations of stabilisers}

Let $U$ be an arbitrary finite semigroup, and let $S$ be any subsemigroup of
$U$. 

The algorithms presented in this paper involve computing with the groups $S_L$
arising from the stabiliser $\stab_S(L)$ of an $\L^U$-class $L$.  It is
impractical to compute directly with $S_L$ or $U_L$ with their natural actions
on $L$.  For example, the $\L$-class $L$ of any transformation $x\in T_{10}$
with $5$ points in its image has $5,103,000$ elements but, in this case,
$U_{L}$ has a faithful permutation representation on only $5$ points.

With the preceding comments in mind, throughout this section we will make
statements about arbitrary faithful representations of $U_{L}$ or $S_L$, 
$L\in U/\L^U$, rather than about the natural action of $U_L$ (or $S_L$) on $L$.
More specifically, if $x\in U$ and $\zeta$ is any faithful representation of
$U_{L_x^U}$, then we define 
\begin{equation}\label{equation-demux}
  \l{x}:\stab_U(L_x^U)\to (U_{L_x^U})\zeta\qquad \text{by} \qquad
  (u)\l{x} = (u|_{L_x^U})\zeta\qquad\text{for all}\ u\in U.
\end{equation}
It is clear that $\l{x}$ is a homomorphism.  Since $S$ is a subsemigroup of
$U$, it follows that $\stab_S(L_x^U)$ is a subsemigroup of $\stab_U(L_x^U)$ and
$S_{L_x^U}$ is a subgroup of $U_{L_x^U}$. Hence $\l{x}:\stab_U(L_x^U)\to
(U_{L_x^U})\zeta$ restricted to $\stab_S(L_x^U)$ is a homomorphism from
$\stab_S(L_x^U)$ to $(S_{L_x^U})\zeta$.

It is possible that, since $\zeta$ depends on the $\L^U$-class of $x$ in $U$, we
should use $\zeta_{L_x^U}$ to denote the faithful representation given above.
However, to simplify our notation we will not do this.  Note that, with
this definition, $(s,t)\in \ker(\l{x})$ if and only if $s|_{L_{x}^U} =
t|_{L_x^U}$.  To further simplify our notation, we will write $S_{x}$ and $U_{x}$ to
denote $(\stab_{S}(L_x^U))\l{x}=(S_{L_x^U})\zeta$ and
$(\stab_{U}(L_x^U))\l{x}=(U_{L_x^U})\zeta$, respectively.  Analogously, for
every $x\in U$ we suppose that we have a homomorphism $\r{x}$ from
$\stab_U(R_x^U)$ into a group where the image of $\r{x}$ is isomorphic to
${}_{R_x^U}U$. We write ${}_{x}S$ and ${}_{x}U$ for
$(\stab_S(R_x^U))\r{x}$ and $(\stab_U(R_x^U))\r{x}$, respectively.

In the case that $S$ is a transformation, partial permutation, matrix, or
partition semigroup, or a subsemigroup of a Rees $0$-matrix semigroup,  we will
show in Section~\ref{section-special-cases} how to obtain faithful
representations of the stabilisers of $\L^U$- and $\R^U$-classes as permutation
or matrix groups. It is then possible to use algorithms, such as the
Schreier-Sims algorithm, from Computational Group Theory to compute with these
groups.

We will make repeated use of the following straightforward lemma. 

\begin{lem}\label{lem-free}
  Let $x\in U$ and $s,t\in \stab_{U}(L_x^U)$ be arbitrary. Then the following
  are equivalent:
  \begin{enumerate}[\rm (a)]
    \item $(s)\l{x}=(t)\l{x}$;
    \item $ys=yt$ for all $y\in L_x^U$;
    \item there exists $y\in L_x^U$ such that $ys=yt$.
  \end{enumerate}
\end{lem}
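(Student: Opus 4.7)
The plan is to prove the cycle (a)$\Rightarrow$(b)$\Rightarrow$(c)$\Rightarrow$(a), with the only non-trivial step being (c)$\Rightarrow$(b), which will follow immediately from the defining property of Green's $\L^U$-relation.

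First, I would unwind the definitions. By \eqref{equation-demux}, the map $\l{x}$ sends $s\in \stab_U(L_x^U)$ to $(s|_{L_x^U})\zeta$, where $\zeta$ is a \emph{faithful} representation of $U_{L_x^U}$. Since $\zeta$ is in particular injective, $(s)\l{x} = (t)\l{x}$ is equivalent to the equality of transformations $s|_{L_x^U} = t|_{L_x^U}$, which is by definition the statement that $ys=yt$ for every $y\in L_x^U$. This gives (a)$\iff$(b) directly, and (b)$\Rightarrow$(c) is immediate because $L_x^U$ is non-empty (it contains $x$).

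The substantive step is (c)$\Rightarrow$(b). Suppose $y\in L_x^U$ satisfies $ys=yt$, and let $z\in L_x^U$ be arbitrary. Then $z \L^U y$, so by definition $U^1 z = U^1 y$, and in particular $z \in U^1 y$. Choose $a\in U^1$ with $z=ay$. Then
\[
zs = (ay)s = a(ys) = a(yt) = (ay)t = zt,
\]
using associativity and the hypothesis $ys=yt$. Since $z\in L_x^U$ was arbitrary, (b) holds.

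I do not foresee any real obstacle here: the lemma is essentially a restatement that $\zeta$ is faithful combined with the translation-invariance built into Green's $\L$-relation. The only thing one has to be a little careful about is the implicit use of $U^1$ rather than $U$ when writing $z=ay$, which is harmless since multiplication by the adjoined identity is trivial.
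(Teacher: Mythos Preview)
Your proof is correct and follows essentially the same approach as the paper: unwind the definition of $\l{x}$ using the faithfulness of $\zeta$ to get (a)$\iff$(b), and for the only substantive step use that any $z\in L_x^U$ can be written as $z=ay$ with $a\in U^1$. The paper organizes the cycle as (a)$\Rightarrow$(b)$\Rightarrow$(c)$\Rightarrow$(a) rather than proving (a)$\iff$(b) outright, but the content is identical.
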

\begin{proof} 
  (a) $\Rightarrow$ (b) If $(s)\l{x}=(t)\l{x}$, then $(s|_{L_x^U})\zeta=
  (t|_{L_x^U})\zeta$ and so $s|_{L_x^U}= t|_{L_x^U}$. It follows that 
  $ys=yt$ for all $y\in L_x^U$.

  (b) $\Rightarrow$ (c) is trivial. 
  
  (c) $\Rightarrow$ (a)
  Suppose that $y\in L_x^U$ is such that $ys=yt$. If $z\in L_x^U$ is arbitrary,
  then there exists $u\in U^1$ such that $z=uy$. Hence $zs=uys=uyt=zt$ and so
  $s|_{L_x^U}=t|_{L_x^U}$. It follows, by the definition of $\mu_{x}$, that
  $(s,t)\in \ker(\l{x})$ and so $(s)\l{x} = (t)\l{x}$. 
\end{proof}

The analogue of Lemma~\ref{lem-free} also holds for $\stab_{U}(R_x^U)$ and
$\r{x}$; the details are omitted.


\subsection{A decomposition for Green's classes}

In this section, we show how to decompose an $\R$- or $\L$-class of our
subsemigroup $S$ as briefly discussed above. 

\begin{prop}[cf. Theorems 3.3 and 4.3 in \cite{Linton1998aa}]\label{prop-main-0}
 If $x\in S$ is arbitrary,  then:
  \begin{enumerate}[\rm (a)] 
    \item $\set{L_{y}^U}{y\R^S x}$ is a strongly connected component of
      the right action of $S$ on $\set{L_x^U}{x\in S}$;
    \item $\set{R_{y}^U}{y\L^S x}$ is a strongly connected component of the
      left action of $S$ on $\set{R_x^U}{x \in S}$.
  \end{enumerate}
\end{prop}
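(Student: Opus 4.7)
The plan is to prove part (a) directly and then obtain part (b) by a completely symmetric argument using Lemma~\ref{lem-greens-simple}(b) in place of (a); so I focus on (a). I want to establish the set equality between the s.c.c.\ of $L_x^U$ under the right action of $S$ on $\{L_z^U:z\in S\}$ and the set $\{L_y^U:y\R^S x\}$, which splits into two inclusions.

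For the inclusion $\{L_y^U:y\R^S x\}\subseteq\text{s.c.c.}(L_x^U)$, I would just unpack definitions: if $y\R^S x$, then there are $s,t\in S^1$ with $xs=y$ and $yt=x$, and hence $L_x^U\cdot s = L_{xs}^U = L_y^U$ and $L_y^U\cdot t = L_{yt}^U = L_x^U$. This immediately witnesses that $L_y^U$ and $L_x^U$ lie in the same s.c.c., and of course $y=xs\in S$ so this is a legitimate element of the state set on which $S$ acts.

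For the converse inclusion, suppose $L$ is an $\L^U$-class lying in the s.c.c.\ of $L_x^U$ in $\{L_z^U:z\in S\}$. Then there exists $s\in S^1$ with $L_x^U\cdot s=L=L_{xs}^U$, and by the assumption there also exists $t\in S^1$ with $L\cdot t = L_x^U$. In particular $L_x^U\sim L_{xs}^U$, so by Lemma~\ref{lem-greens-simple}(a) we conclude $x\R^S xs$. Setting $y=xs\in S$ gives $L=L_y^U$ with $y\R^S x$, as required.

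I do not expect a genuine obstacle here; the work has essentially been done in Lemma~\ref{lem-greens-simple}, and the proposition is really a repackaging of it at the level of s.c.c.'s. The only mild subtlety to be careful about is that the state set of the action is $\{L_z^U:z\in S\}$ rather than the larger $U/\L^U$, so I would confirm (as above) that the witnesses $xs$ produced when tracing around the s.c.c.\ genuinely lie in $S$, which is automatic because $x\in S$ and $s\in S^1$. Part (b) is then an entirely dual argument: replace the right action by the left action, $\R^S$ by $\L^S$, and invoke Lemma~\ref{lem-greens-simple}(b) where (a) was used.
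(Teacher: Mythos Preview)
Your proof is correct and follows essentially the same approach as the paper: both reduce the claim to Lemma~\ref{lem-greens-simple}(a), the paper compressing your two inclusions into a single biconditional. Your extra remark about the state set being $\{L_z^U:z\in S\}$ is a harmless elaboration; otherwise the arguments coincide.
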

\begin{proof}
  We prove only the first statement, since the second is dual.
  Suppose $y\in S$ and $x\not =y$. Then $y\R^Sx$ if and only if there exists
  $s\in S$ such that $xs=y$ and $xs\R^S x$. By
  Lemma~\ref{lem-greens-simple}(a), $xs\R^S x$ if and only if $L_{x}^U\sim
  L_{xs}^U$.
\end{proof}

\begin{cor}\label{cor-greens-simple}
  Let $x, y, s, t\in S$. Then the following hold:
  \begin{enumerate}[\rm(a)]
    \item if $x \R^S y$ and $xs \L^U y$, then $xs \R^S y$;
    \item if $x \L^S y$ and $tx \R^U y$, then $tx \L^S y$.
  \end{enumerate}
\end{cor}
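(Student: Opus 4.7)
The plan is to derive (a) directly from Lemma \ref{lem-greens-simple}(a) together with Proposition \ref{prop-main-0}(a), and then observe that (b) is dual.

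For part (a), first I would translate the hypothesis $x \R^S y$ into information about $\L^U$-classes. By Proposition \ref{prop-main-0}(a), the set $\{L_z^U : z \R^S x\}$ is a single strongly connected component of the right action of $S$ on $\{L_z^U : z\in S\}$, so $x \R^S y$ yields $L_x^U \sim L_y^U$. Next, the hypothesis $xs \L^U y$ is just the equality $L_{xs}^U = L_y^U$. Combining these gives $L_x^U \sim L_{xs}^U$.

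Now Lemma \ref{lem-greens-simple}(a) says exactly that $L_x^U \sim L_{xs}^U$ is equivalent to $x \R^S xs$. Since $\R^S$ is an equivalence relation, $x \R^S y$ and $x \R^S xs$ combine to give $xs \R^S y$, completing part (a).

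Part (b) follows by the left-right dual argument, using Proposition \ref{prop-main-0}(b) and Lemma \ref{lem-greens-simple}(b) in place of their right-handed counterparts. I do not anticipate any real obstacle: the whole content of the corollary is a short chain through the s.c.c.\ characterisations already established, and the only thing one has to be careful about is keeping the distinction between the ambient semigroup $U$ and the subsemigroup $S$ consistent, so that the equality $L_{xs}^U = L_y^U$ is genuinely the right handle to feed into the s.c.c.\ relation $\sim$ from Lemma \ref{lem-greens-simple}(a).
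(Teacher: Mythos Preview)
Your argument is correct and matches the paper's proof essentially step for step: both use Proposition~\ref{prop-main-0}(a) to get $L_x^U\sim L_y^U$ from $x\R^S y$, combine this with $L_{xs}^U=L_y^U$, and then invoke Lemma~\ref{lem-greens-simple}(a) and transitivity. (The paper even cites Lemma~\ref{lem-greens-simple}(b) at that step, which is evidently a typo for (a), so your version is if anything cleaner.)
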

\begin{proof}
  Again, we only prove part (a).  Since $xs\L^U y$, it follows that $L_{xs}^U =
  L_{y}^U$ and, since  $x\R^S y$, by Proposition~\ref{prop-main-0}(a),
  $L_{x}^U\sim L_{y}^U = L_{xs}^U$.  Hence $xs \R^S x$ by
  Lemma~\ref{lem-greens-simple}(b), and, since $x \R^S y$ by assumption, the
  proof is complete. 
\end{proof}


\begin{prop}[cf. Theorems 3.7 and 4.6 in \cite{Linton1998aa}]\label{prop-main-1}
  Suppose that $x\in S$ and there exists $x'\in U$ where $xx'x=x$
  (i.e.~$x$ is regular in $U$).  Then the following hold:
  \begin{enumerate}[\rm (a)]
    \item $L_x^U\cap R_x^S$ is a group
      under the multiplication $*$ defined by $s\ast t=sx't$ for all $s,t \in
      L_x^U\cap R_x^S$ and its identity is $x$;
    \item $\phi:S_{x}\to L_x^U\cap R_x^S$ defined by 
      $((s)\l{x})\phi=xs$, for all $s\in \stab_S(L_x^U)$, is an isomorphism;
    \item $\phi^{-1}:L_x^U\cap R_x^S\to S_{x}$ is defined by 
      $(s)\phi^{-1}=(x's)\l{x}$ for all $s\in L_x^U\cap R_x^S$.
    \end{enumerate}
\end{prop}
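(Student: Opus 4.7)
The plan is to define $\phi$ and its candidate inverse $\phi^{-1}$ as in (b) and (c), verify they are mutually inverse bijections, and then deduce (a) by transporting the group structure of $S_x$ through $\phi$. The technical workhorse I would use throughout is the identity $yx'x = y$ for every $y \in L_x^U$, which follows by writing $y = ax$ with $a \in U^1$ and applying $xx'x = x$.

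For $\phi$, well-definedness is straightforward: if $s \in \stab_S(L_x^U)$, then $xs \in L_x^U \cdot s = L_x^U$, and Lemma~\ref{lem-greens-simple}(a) yields $x \R^S xs$ (the s.c.c.\ of $L_x^U$ trivially contains itself); independence of the representative of $(s)\l{x}$ is immediate from Lemma~\ref{lem-free}. The main obstacle, as I see it, lies with $\phi^{-1}$: one needs to show that $(x's)\l{x}$ actually belongs to $S_x$, not merely to $U_x$. To handle this, given $s \in L_x^U \cap R_x^S$, I would use $x \R^S s$ to write $s = xu$ for some $u \in S^1$. The workhorse identity then yields $y \cdot x's = yx'xu = yu$ for every $y \in L_x^U$, so $(x's)\l{x} = (u)\l{x}$ by Lemma~\ref{lem-free}. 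Since $\L^U$ is a right congruence and $xu = s \L^U x$, right multiplication by $u$ maps $L_x^U$ into itself, and finiteness of $L_x^U$ combined with the injectivity coming from $xuw = x$ (for some $w \in U^1$) forces $L_x^U \cdot u = L_x^U$. Hence $u \in \stab_S(L_x^U)$ and therefore $(x's)\l{x} \in S_x$.

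Verifying that the composites $\phi\phi^{-1}$ and $\phi^{-1}\phi$ are identities is then short: the former sends $(s)\l{x}$ to $(x'xs)\l{x} = (s)\l{x}$ by Lemma~\ref{lem-free} applied to $yx'xs = ys$, while the latter sends $s$ to $xx's = xx'xu = xu = s$. This yields (b) and (c) at the level of bijections. To close out (a), I would transport the multiplication of $S_x$ along $\phi$: for $s, t \in L_x^U \cap R_x^S$,
\begin{equation*}
  \bigl((s)\phi^{-1} \cdot (t)\phi^{-1}\bigr)\phi = \bigl((x's x't)\l{x}\bigr)\phi = x \cdot x's x't = (xx's)\, x' t = s x' t,
\end{equation*}
and the identity $(x'x)\l{x}$ of $S_x$ maps under $\phi$ to $x \cdot x'x = x$. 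Thus $(L_x^U \cap R_x^S, \ast)$ is a group with identity $x$ and $\phi$ is a group isomorphism.
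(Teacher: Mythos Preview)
Your proof is correct and follows essentially the same approach as the paper. Both arguments hinge on the same key step: given $s\in L_x^U\cap R_x^S$, one uses $x\R^S s$ to produce $u\in S^1$ with $xu=s$, observes that $(x's)\l{x}=(u)\l{x}$ via Lemma~\ref{lem-free}, and concludes that $(x's)\l{x}\in S_x$. The paper packages this as surjectivity of $\phi$ together with a one-sided inverse $\theta$ (your $\phi^{-1}$), verifies the homomorphism property directly, and deduces (a) from (b); you instead establish both composites are identities and then transport the group structure of $S_x$ through $\phi$ to obtain (a) and the homomorphism property simultaneously. These are organisational variants of the same argument. One small remark: your justification that $u\in\stab_S(L_x^U)$ via finiteness and injectivity is correct but slightly roundabout---since $L_{xu}^U=L_s^U=L_x^U$, it follows immediately from Green's Lemma (as in the proof of Lemma~\ref{lem-equiv-actions}) that $L_x^U u=L_x^U$ setwise.
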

\begin{proof}
  We begin by showing that $x$ is an identity under the multiplication $*$ of
  $L_x^U\cap R_x^S$.
  Since $x'x\in L_x^U$ and $xx'\in R_x^U$ are idempotents, it follows that $x'x$ is
  a right identity for $L_x^U$ and $xx'$ is a left identity for $R_x^S\subseteq
  R_x^U$.  So, if $s\in L_x^U\cap R_x^S$ is arbitrary, then 
  \begin{equation*}
    x*s=xx's=s=sx'x=s*x,
  \end{equation*}
  as required.

  We will prove that part (b) holds, which implies part (a). \smallskip
  
  \noindent\textbf{$\phi$ is well-defined.} If $s\in \stab_S(L_x^U)$, then
  $xs\L^U x$. Hence, by Corollary~\ref{cor-greens-simple}(a), $xs\R^S x$ and so
  $((s)\l{x})\phi=xs\in L_x^U\cap R_x^S$. If $t\in\stab_S(L_x^U)$ is such that
  $(t)\l{x}=(s)\l{x}$, then, by Lemma~\ref{lem-free}, $xt=xs$.\smallskip

  \noindent\textbf{$\phi$ is surjective.} 
  Let $s\in L_x^U\cap R_x^S$ be arbitrary. Then $xx's=x*s=s$ since $x$ is the
  identity of $L_x^U\cap R_x^S$. It follows that  
  $$L_x^U\cdot x's=L^U_s=L^U_x$$
  and so $x's\in \stab_U(L_x^U)$.  Since $x\R^{S}s$, there exists $u\in S^1$
  such that $xu=s=xx's$.  It follows that $u\in \stab_S(L_x^U)$ and, by
  Lemma~\ref{lem-free}, $(u)\l{x}=(x's)\l{x}$. Thus $((u)\l{x})\phi=xu=s$ and
  $\phi$ is surjective. 
  \smallskip

  \noindent\textbf{$\phi$ is a homomorphism.} Let $s, t\in \stab_S(L_x^U)$.
  Then, since $xs\in L_x^U$ and $x'x$ is a right identity for $L_x^U$, 
  $$((s)\l{x})\phi*((t)\l{x})\phi=xs*xt=xsx'xt=xst=
    ((st)\l{x})\phi=((s)\l{x}\cdot (t)\l{x})\phi,$$ 
  as required. 
  \smallskip

  \noindent\textbf{$\phi$ is injective.}
  Let $\theta: L_x^U\cap R_x^S\to S_{x}$ be defined by $(y)\theta=(x'y)\l{x}$
  for all $y\in L_x^U\cap R_x^S$. We will show that $\phi\theta$ is the
  identity mapping on $S_x$, which implies that $\phi$ is injective, that
  $(y)\theta\in S_x$ for all $y\in L_x^U\cap R_x^S$ (since $\phi$ is
  surjective), and also proves part (c) of the proposition. If $s\in
  \stab_S(L_x^U)$, then $((s)\l{x})\phi\theta=(xs)\theta=(x'xs)\l{x}$.  But
  $xx'xs=xs$ and so $(x'xs)\l{x}=(s)\l{x}$ by Lemma~\ref{lem-free}.  Therefore,
  $((s)\l{x})\phi\theta=(s)\l{x}$, as required. 
\end{proof}\smallskip


We state the analogue of Proposition~\ref{prop-main-1} for the action of $S$ on
$U/\R^U$ by left multiplication. 


\begin{prop}\label{prop-main-1.5}
  Suppose that $x\in S$ and there exists $x'\in U$ where $xx'x=x$.
  Then the following hold:
  \begin{enumerate}[\rm (a)]

    \item
      $L_x^S\cap R_x^U$ is a group under the multiplication $*$ defined by
      $s\ast t=sx't$ for all $s,t \in L_x^S\cap R_x^U$  and its identity is
      $x$;

    \item
      $\phi:{}_{x}S\to L_x^S\cap R_x^U$ defined by $((s)\r{x})\phi=sx$, for all
      $s\in \stab_S(R_x^U)$, is an isomorphism; 

    \item
      $\phi^{-1}:L_x^S\cap R_x^U\to {}_{x}S$ is defined by 
      $(s)\phi^{-1}=(sx')\r{x}$ for all $s\in L_x^S\cap R_x^U$.
  \end{enumerate}
\end{prop}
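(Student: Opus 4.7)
The plan is to mirror the proof of Proposition~\ref{prop-main-1} nearly verbatim, interchanging left and right throughout. Everything I need has a dual form recorded (or noted) in the excerpt: the left-action analogues of Lemma~\ref{lem-equiv-actions} and Lemma~\ref{lem-free}, together with Corollary~\ref{cor-greens-simple}(b) and Proposition~\ref{prop-main-0}(b). In particular, the dual of Lemma~\ref{lem-free} states that for $s,t \in \stab_U(R_x^U)$ the following are equivalent: $(s)\r{x} = (t)\r{x}$; $sy = ty$ for every $y \in R_x^U$; and $sy = ty$ for some $y \in R_x^U$.

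First I would verify that $x$ is an identity for $*$. The idempotent $xx' \in R_x^U$ is a left identity of $R_x^U$ and the idempotent $x'x \in L_x^U$ is a right identity of $L_x^U$, so for each $s \in L_x^S \cap R_x^U$ both $x * s = xx's = s$ and $s * x = sx'x = s$. I would then prove (b); part (a) follows at once because $\phi$ transports the group structure of ${}_xS$ onto $L_x^S \cap R_x^U$ with operation $*$.

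For well-definedness of $\phi$: given $s \in \stab_S(R_x^U)$ one has $sx \R^U x$, and Corollary~\ref{cor-greens-simple}(b), applied with the corollary's $x,y$ both equal to $x$ and with $t = s$, upgrades this to $sx \L^S x$, so $sx \in L_x^S \cap R_x^U$; independence of the $\r{x}$-preimage follows from the dual of Lemma~\ref{lem-free}, since $(s)\r{x} = (t)\r{x}$ forces $sx = tx$. For surjectivity, fix $s \in L_x^S \cap R_x^U$; the identity $sx'x = s * x = s$ shows $sx' \cdot R_x^U = R_s^U = R_x^U$ abstractly, hence pointwise by the left-action dual of Lemma~\ref{lem-equiv-actions}, so $sx' \in \stab_U(R_x^U)$. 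Since $s \L^S x$, there exists $u \in S^1$ with $ux = s$, and the same dual of Lemma~\ref{lem-equiv-actions} applied to $u \cdot R_x^U = R_{ux}^U = R_s^U = R_x^U$ gives $u \in \stab_S(R_x^U)$. Now $sx'$ and $u$ agree on $x \in R_x^U$, so the dual of Lemma~\ref{lem-free} yields $(sx')\r{x} = (u)\r{x}$, whence $((u)\r{x})\phi = ux = s$. The homomorphism property is a direct computation: $((s)\r{x})\phi * ((t)\r{x})\phi = sx \cdot x' \cdot tx = s(xx'tx) = s \cdot tx = stx = ((st)\r{x})\phi$, using that $xx'$ is a left identity on $R_x^U \ni tx$.

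For injectivity and part (c) simultaneously, I would introduce $\theta : L_x^S \cap R_x^U \to {}_xS$ by $(s)\theta = (sx')\r{x}$ and verify $\phi\theta = \id_{{}_xS}$: for $s \in \stab_S(R_x^U)$, $((s)\r{x})\phi\theta = (sxx')\r{x} = (s)\r{x}$ by the dual of Lemma~\ref{lem-free}, because $sxx' \cdot x = s \cdot xx'x = sx$. The only real obstacle is the standard bookkeeping in a dual argument: one must keep $xx'$ as the left identity on $R_x^U$, $x'x$ as the right identity on $L_x^U$, and replace every use of Lemma~\ref{lem-free} by the corresponding statement for $\r{x}$ in place of $\l{x}$.
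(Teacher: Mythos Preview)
Your proposal is correct and is exactly the approach the paper takes: the paper states Proposition~\ref{prop-main-1.5} as the dual of Proposition~\ref{prop-main-1} without giving a separate proof, so spelling out the left/right interchange, as you do, is precisely what is required. Your handling of the bookkeeping (using $xx'$ as a left identity on $R_x^U$, $x'x$ as a right identity on $L_x^U$, and the dual of Lemma~\ref{lem-free} for $\r{x}$) matches the paper's argument for Proposition~\ref{prop-main-1} line by line.
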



We can also characterise an $\H^S$-class $H$ in our subsemigroup $S$ in terms
of the stabilisers of the $\L^U$- and $\R^U$-class containing $H$. Note that in
the special case that $S = U$, it follows immediately from
Proposition~\ref{prop-main-1} that $U_x$ is isomorphic to $H_x^U = L_x^U\cap
R_x^U$ under the operation $*$ defined in the proposition.

\begin{prop}[cf. Theorem 5.1 in \cite{Linton1998aa}] \label{prop-main-2}
  Suppose that $x\in S$ and there exists $x'\in U$ where $xx'x=x$.
  Then the following hold:
  \begin{enumerate}[\rm (a)] 

    \item $\Psi: {}_{x}S\to U_{x}$ defined by
      $((s)\r{x})\Psi=(x'sx)\l{x}$, $s\in \stab_S(R_x^U)$, is an
      embedding;
    
    \item $H_x^S$ is a group under the multiplication $s*t=sx't$, with identity
      $x$, and it is isomorphic to $S_{x}\cap ({}_{x}S)\Psi$;
    
    \item $H_x^S$ under $*$ is isomorphic to the Sch\"utzenberger group of
      $H_x^S$.
  \end{enumerate}
\end{prop}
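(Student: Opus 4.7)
For part (a), my plan is to show that $\Psi$ is a well-defined injective homomorphism. The crux is well-definedness: that $x'sx\in\stab_U(L_x^U)$ for every $s\in\stab_S(R_x^U)$. I would first note that $xx'$ is an idempotent that acts as a left identity on $R_x^U$, so $xx'\cdot sx=sx$; then the dual of Proposition~\ref{prop-technical}(a), applied to the group ${}_{R_x^U}S$, supplies $s'\in\stab_S(R_x^U)$ with $s's$ acting as the identity on $R_x^U$. In particular $s'\cdot sx=x$, giving $sx\L^U x$. Since $\L^U$ is a right congruence, for every $y\in L_x^U$ we have $y\cdot x'sx\L^U xx'sx=sx\L^U x$, so $L_x^U\cdot x'sx\subseteq L_x^U$; equality follows from Green's Lemma. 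Independence of the representative $s$ comes from the dual of Lemma~\ref{lem-free}. For the homomorphism property, Lemma~\ref{lem-free} reduces matters to checking $xx'stx=xx'sxx'tx$ (testing both sides at $y=x$); both sides simplify to $stx$ using that $xx'$ acts as the left identity on $R_x^U$ and that $tx,stx\in R_x^U$. Injectivity is symmetric: $(x'sx)\l{x}=(x'tx)\l{x}$ forces $xx'sx=xx'tx$ by Lemma~\ref{lem-free}, i.e., $sx=tx$, i.e., $(s)\r{x}=(t)\r{x}$.

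For part (b), I would first observe that $H_x^S=(L_x^U\cap R_x^S)\cap(L_x^S\cap R_x^U)$, because $\L^S\subseteq\L^U$ and $\R^S\subseteq\R^U$. Propositions~\ref{prop-main-1}(a) and~\ref{prop-main-1.5}(a) show both factors are subgroups of $(H_x^U,*)$ with common identity $x$, so their intersection $H_x^S$ is itself a subgroup. Letting $\phi_U:U_x\to H_x^U$ be the isomorphism of Proposition~\ref{prop-main-1}(b) applied with $S=U$, the restriction $\phi_U|_{S_x}$ coincides with the $\phi$ of Proposition~\ref{prop-main-1}(b), with image $L_x^U\cap R_x^S$; and a short computation (using $xx'\cdot sx=sx$) shows $\phi_U|_{({}_xS)\Psi}$ has image equal to $({}_xS)\phi'=L_x^S\cap R_x^U$, where $\phi'$ is the isomorphism of Proposition~\ref{prop-main-1.5}(b). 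Therefore $\phi_U$ restricts to a group isomorphism $S_x\cap({}_xS)\Psi\to H_x^S$.

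For part (c), the right Sch\"utzenberger group $\Gamma(H_x^S)$ is, by definition, the image in $\sym(H_x^S)$ of the stabiliser $\{s\in S^1:H_x^S s=H_x^S\}$; it is classically known to act regularly on $H_x^S$, so $\beta:\Gamma(H_x^S)\to H_x^S$, $\sigma\mapsto x\cdot\sigma$, is a bijection. To upgrade $\beta$ to a group isomorphism onto $(H_x^S,*)$ I would compute, for permutations $\sigma_s,\sigma_t$ induced by $s,t\in S^1$ with $H_x^S s=H_x^S t=H_x^S$: $\beta(\sigma_s\sigma_t)=xst$, while $\beta(\sigma_s)*\beta(\sigma_t)=(xs)x'(xt)=(xs)(x'x)t=(xs)t=xst$, using that $xs\in L_x^U$ and $x'x$ is a right identity on $L_x^U$. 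The main obstacle is the well-definedness step in (a): establishing $x'sx\in\stab_U(L_x^U)$ is the one place where the argument requires more than routine unwinding, since one must extract a left inverse for the permutation $s$ induces on $R_x^U$ from the group structure of ${}_{R_x^U}S$ in order to conclude $sx\L^U x$.
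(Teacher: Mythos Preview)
Your proof is correct, and parts (b) and (c) are essentially the paper's argument: the same decomposition $H_x^S=(L_x^U\cap R_x^S)\cap(L_x^S\cap R_x^U)$, the same use of the earlier isomorphisms, and the same computation for the Sch\"utzenberger group. Your packaging of (b) via the single ambient isomorphism $\phi_U:U_x\to H_x^U$ and the observation $\phi_U(A\cap B)=\phi_U(A)\cap\phi_U(B)$ is arguably a little tidier than the paper's version, which establishes surjectivity onto $S_x\cap({}_xS)\Psi$ by a separate element-chasing argument.

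The genuine difference is in (a). You work by direct verification: you show $sx\L^U x$ (via an inverse for $s$ in the group ${}_{R_x^U}S$), then deduce $x'sx\in\stab_U(L_x^U)$, then check the homomorphism and injectivity identities by evaluating at $y=x$ using Lemma~\ref{lem-free}. The paper instead observes that $\Psi$ factors as $\phi\,\theta$, where $\phi:{}_xS\to L_x^S\cap R_x^U$ is the isomorphism of Proposition~\ref{prop-main-1.5}(b) and $\theta:H_x^U\to U_x$ is the isomorphism of Proposition~\ref{prop-main-1}(c) applied with $S=U$; since $L_x^S\cap R_x^U\subseteq H_x^U$, the composite is automatically a well-defined injective homomorphism, and the formula $((s)\nu_x)\phi\theta=(x'sx)\mu_x$ drops out. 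This buys the paper all three properties at once and makes the ``main obstacle'' you flag disappear entirely. Your direct route is perfectly sound, but the paper's composition is shorter and explains structurally why $\Psi$ must be an embedding.
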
 
\begin{proof} 
  \textbf{(a).}  Let $\phi:{}_{x}S\to L_x^S\cap R_x^U$ be the isomorphism defined in
  Proposition~\ref{prop-main-1.5}(b), and let $\theta: L_x^U\cap R_x^U\to
  U_{x}$ be the isomorphism defined in Proposition~\ref{prop-main-1}(c) (applied to
  $U$ as a subsemigroup of itself). Then, since $L_x^S\cap R_x^U\subseteq
  L_x^U\cap R_x^U$, $\phi\theta:{}_{x}S\to U_{x}$ is a embedding (being the
  composition of injective homomorphisms).  By definition,
  $((s)\r{x})\phi\theta=(x'sx)\l{x}=((s)\r{x})\Psi$, for all $s\in
  \stab_S(R_x^U)$.\smallskip 
  
  \textbf{(b).} Note that  
  $H_x^S=L_x^S\cap R_x^S=(L_x^U\cap R_x^S)\cap (R_x^U\cap L_x^S)$. 
  From Proposition~\ref{prop-main-1}(c), $\phi_1^{-1}:R_x^S\cap L_x^U\to
  S_{x}\leq U_{x}$ defined by 
  $$(s)\phi_1^{-1}=(x's)\l{x}$$
  is an isomorphism (where $R_x^S\cap L_x^U$ has multiplication $*$ defined in 
  Proposition~\ref{prop-main-1}(a) as $s*t=sx't$ for all $s,t\in R_x^S\cap
  L_x^U$).  Similarly, by Proposition~\ref{prop-main-1.5}(c), 
  $\phi_2^{-1}:R_x^U\cap L_x^S\to {}_{x}S$ defined by 
  $$(s)\phi_2^{-1}=(sx')\r{x}$$
  is an isomorphism. Hence if $s\in H_x^S$, then, by
  Proposition~\ref{prop-main-1}(a),
  $$(s)\phi_2^{-1}\Psi=(x'sx'x)\l{x}=(x's)\l{x}=(s)\phi_1^{-1}$$
  and so $\phi_1^{-1}$, restricted to $H_x ^ S$, is an injective homomorphism
  from $H_x^S$ under $*$ into $S_{x}\cap ({}_{x}S)\Psi$. 

  If $g\in S_{x}\cap ({}_{x}S)\Psi$, then there exists $a\in \stab_S(R_x^U)$
  such that $(x'ax)\l{x}=g$.  Since $ax\R^U x$ and $xx'$ is a
  left identity in $R_x^U$, it follows that  $xx'ax=ax$ and so 
  $ax=xx'ax=((x'ax)\l{x})\phi_1\in R_x^S\cap L_x^U\subseteq R_x^S$
  where $\phi_1$ is given in Proposition~\ref{prop-main-1}(b).
  Similarly,  $(a)\r{x}\in {}_{x}S$ implies that 
  $ax=((a)\r{x})\phi_2\in {R_x^U}\cap L_x^S\subseteq L_x^S$.
  Therefore $ax\in H_x^S$ and $(ax)\phi_1^{-1}=(x'ax)\l{x}=g$, and so
  $\phi_1^{-1}$ is surjective and thus an isomorphism from $H_x^S$ to
  $S_{x}\cap ({}_{x}S)\Psi$, as required. \smallskip

  \textbf{(c).} The \textit{Sch\"utzenberger group} of $H=H_x^S$ is defined to be
  the quotient of $\stab_S(H)$ by the kernel of its action on $H$ (by right
  multiplication on the elements of $H$). In our notation the Sch\"utzenberger
  group of $H$ is denoted $S_H$, however in the literature it is usually
  denoted by $\Gamma_R(H)$. It is well-known that $S_H$ acts transitively and
  freely on $H$, see for example \cite[Section A.3.1]{Rhodes2009aa}. It follows
  that $\phi:S_H\to H$ defined by
  $$(s|_{H})\phi=xs$$ 
  is a bijection. If $s|_H, t|_H\in S_H$, then $xs\in H$ and so
  $xsx'x=xs*x=xs$, since $x$ is the identity of $H$ by
  Proposition~\ref{prop-main-1}(a).  Thus 
  $$(s|_H\cdot t|_H)\phi=xst=xsx'xt=(s|_H)\phi*(t|_H)\phi$$
  and $\phi$ is an isomorphism, as required. 
\end{proof}\smallskip


The statement of Proposition~\ref{prop-main-2} can be simplified somewhat in the
case that the element $x\in S$ is regular in $S$ and not only in $U$.


\begin{cor}\label{cor-regular}
  Suppose that $x\in S$ and there exists $x'\in S$ where $xx'x=x$.
  Then the following three groups are isomorphic: $H_x^S$ under the
  multiplication $s*t=sx't$, $S_{x}$, and ${}_{x}S$.  Furthermore,
  $\Psi:{}_{x}S \to S_{x}$ defined by $((s)\r{x})\Psi=(x'sx)\l{x}$ is an
  isomorphism. 
\end{cor}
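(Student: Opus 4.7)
The plan is to show that, when $x$ is regular in $S$ itself (not merely in $U$), the two intersections appearing in Propositions~\ref{prop-main-1} and \ref{prop-main-1.5} both collapse to $H_x^S$. Once that identification is made, all three of the claimed isomorphisms, and the fact that $\Psi$ is an isomorphism, drop out of the preceding propositions with essentially no further work.

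The central step is to prove $L_x^U \cap R_x^S = H_x^S$; the dual equality $L_x^S \cap R_x^U = H_x^S$ is handled symmetrically. The inclusion $H_x^S \subseteq L_x^U \cap R_x^S$ is immediate because $\L^S \subseteq \L^U$ (the witnesses in $S^1$ also lie in $U^1$). For the reverse inclusion, take $y \in L_x^U \cap R_x^S$. Since $y \R^S x$, I would first verify that $y$ is itself regular in $S$: writing $y = xa$ and $x = yb$ with $a,b \in S^1$, a direct calculation shows that $y'' := bx' \in S$ satisfies $yy''y = (yb)(x'y) = x(x'y) = xx'xa = xa = y$. With both $x$ and $y$ now regular in $S$, the unnamed proposition at the start of Section~\ref{section-prerequisites} (the analogue of Proposition~A.1.16 in \cite{Rhodes2009aa}) upgrades $y \L^U x$ to $y \L^S x$, whence $y \in L_x^S \cap R_x^S = H_x^S$.

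With those two equalities in hand, Proposition~\ref{prop-main-1}(a)(b) gives that $H_x^S$ is a group under $*$ isomorphic to $S_x$, while Proposition~\ref{prop-main-1.5}(a)(b) gives $H_x^S \cong {}_xS$ under the same multiplication. In particular $|{}_xS| = |H_x^S| = |S_x|$. Since Proposition~\ref{prop-main-2}(a) already establishes that $\Psi$ is an injective homomorphism, and since these groups are finite, equality of orders forces $\Psi$ to be bijective, hence an isomorphism. The only substantive obstacle is the regularity-transfer step inside the $\R^S$-class of $x$; everything else is bookkeeping assembled from the two preceding propositions.
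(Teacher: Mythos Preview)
Your proof is correct and follows essentially the same route as the paper: both arguments hinge on the identification $L_x^U\cap R_x^S = H_x^S = R_x^U\cap L_x^S$ (the paper states this in one line, appealing implicitly to the fact that every element $\R^S$-related to a regular element is regular, which is exactly what your explicit computation of $y''=bx'$ verifies), and then invoke Propositions~\ref{prop-main-1}(b) and~\ref{prop-main-1.5}(b). The only minor divergence is in the last step: the paper shows $\Psi$ is an isomorphism by writing it as the composite $\phi\theta$ of the two isomorphisms ${}_xS\to H_x^S\to S_x$ coming from Propositions~\ref{prop-main-1.5}(b) and~\ref{prop-main-1}(c), whereas you use injectivity from Proposition~\ref{prop-main-2}(a) together with the equality of cardinalities $|{}_xS|=|H_x^S|=|S_x|$---both are equally valid and equally short.
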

\begin{proof} 
  Since $x$ is regular in $S$, it follows that $H_x^S=L_x^S\cap R_x^S=L_x^U\cap
  R_x^S$ and, similarly, $H_x^S=R_x^U\cap L_x^S$.  So, the first part of the
  statement follows by Propositions~\ref{prop-main-1}(b)
  and~\ref{prop-main-1.5}(b).

  By Propositions~\ref{prop-main-1.5}(b) and~\ref{prop-main-1}(c), respectively,
  there exist isomorphisms $\phi:{}_{x}S\to L_x^S\cap R_x^U$ and $\theta:
  L_x^U\cap R_x^S\to S_{x}$. Therefore since $H_x^S=L_x^U\cap R_x^S=R_x^U\cap
  L_x^S$, it follows that $\Psi=\phi\theta: {}_{x}S \to S_{x}$ is an
  isomorphism.
\end{proof}\smallskip


We collect some corollaries of what we have proved so far. 

\begin{cor} \label{cor-collect}
  If $x,y\in S$ are regular elements of $U$, then the following hold: 
  \begin{enumerate}[\rm (a)]   
    \item
      If $x\R^{S}y$, then $S_{L_x^U}$ and $S_{L_y^U}$ are conjugate subgroups
      of $\sym(U)$. In particular, $S_x$ and $S_y$ are isomorphic;

    \item
      $|R_x^S|$ equals the size of the group $S_{x}$ multiplied by
      the size of the s.c.c.~of $L_{x}^U$ under the action of $S$ on
      $\set{L_x^U}{x\in S}$;

    \item
      If $L_{x}^U\sim L_{y}^U$, then $|R^S_x|=|R^S_y|$;

    \item
      If $x\R^{S}y$ and $u\in S ^ 1$ is such that $L_{x}^U\cdot
      u=L_{y}^U$, then the function from $L_x^U\cap R_x^S$ to $L_y^U\cap
      R_x^S$
      defined by $s \mapsto su$ is a bijection. 
  \end{enumerate}
\end{cor}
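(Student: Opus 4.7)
The plan is to handle the four parts in order, each relying on results already established in the section.

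For part~(a), the starting point is Proposition~\ref{prop-main-0}(a): $x\R^Sy$ means $L_x^U$ and $L_y^U$ lie in a common strongly connected component of the right action of $S$ on $\set{L_z^U}{z\in S}$. Proposition~\ref{prop-technical}(b) then gives that $S_{L_x^U}$ and $S_{L_y^U}$ are conjugate subgroups of $\sym(U)$. For the isomorphism $S_x\cong S_y$, I would observe that the map $\l{x}$ of~\eqref{equation-demux} factors as $s\mapsto s|_{L_x^U}\mapsto (s|_{L_x^U})\zeta$ with $\zeta$ faithful; hence $S_x=(S_{L_x^U})\zeta\cong S_{L_x^U}$, and similarly $S_y\cong S_{L_y^U}$. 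Conjugate groups being isomorphic gives the result.

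For part~(b), I would decompose $R_x^S$ as the disjoint union of its intersections with the $\L^U$-classes of $U$. By Proposition~\ref{prop-main-0}(a), the non-empty pieces correspond exactly to the $\L^U$-classes $L_z^U$ in the s.c.c.~of $L_x^U$. For each such $L_z^U$, any representative $z\in L_z^U\cap R_x^S$ satisfies $z\R^Ux$, so $z$ is regular in $U$; Proposition~\ref{prop-main-1}(b) then identifies $L_z^U\cap R_x^S=L_z^U\cap R_z^S$ with the group $S_z$. Part~(a) yields $|S_z|=|S_x|$, and summing over the s.c.c.~gives the claimed formula. Part~(c) is then immediate from (b): $L_x^U$ and $L_y^U$ lie in a common s.c.c., whose cardinality is the same factor for both $|R_x^S|$ and $|R_y^S|$, while $|S_x|=|S_y|$ by the conjugacy of the associated stabilisers from Proposition~\ref{prop-technical}(b).

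For part~(d), the hypotheses $x\R^Sy$ together with Proposition~\ref{prop-main-0}(a) place $L_x^U$ and $L_y^U$ in a common s.c.c., so Proposition~\ref{prop-technical}(a) supplies $\ov u\in S^1$ with $L_y^U\cdot\ov u=L_x^U$, $(u\ov u)|_{L_x^U}=\id_{L_x^U}$, and $(\ov u u)|_{L_y^U}=\id_{L_y^U}$. The two identities show that $s\mapsto su$ from $L_x^U$ to $L_y^U$ has $t\mapsto t\ov u$ as a two-sided inverse. Moreover, $s=su\ov u$ for $s\in L_x^U$ gives $s\R^S su$, so $s\in R_x^S$ forces $su\in R_x^S$; dually $t=t\ov u u$ shows $t\in R_x^S$ forces $t\ov u\in R_x^S$. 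The two maps therefore restrict to mutually inverse bijections between $L_x^U\cap R_x^S$ and $L_y^U\cap R_x^S$. The only step demanding any real care is~(d), where one must simultaneously control the bijective transport of the $\L^U$-class (a Green's lemma issue) and the preservation of $\R^S$-membership under right multiplication; the pivotal tool is the pair of round-trip identities from Proposition~\ref{prop-technical}(a), without which $\R^S$-membership would not be manifestly symmetric in $u$ and $\ov u$.
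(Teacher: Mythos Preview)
Your proposal is correct and follows essentially the same route as the paper: part~(a) via Proposition~\ref{prop-main-0}(a) and Proposition~\ref{prop-technical}(b), part~(b) by partitioning $R_x^S$ into its $\L^U$-slices and invoking Proposition~\ref{prop-main-1}(b), part~(c) as an immediate consequence, and part~(d) using the $\ov u$ supplied by Proposition~\ref{prop-technical}(a). The only cosmetic difference is in~(d): the paper verifies $su\in L_y^U\cap R_x^S$ by appealing to Corollary~\ref{cor-greens-simple}(a), whereas you use the round-trip identity $su\ov u=s$ directly to get $s\R^S su$; both are valid and equally short.
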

\begin{proof} \textbf{(a).} 
  Since $x\R^{S}y$, it follows by Proposition~\ref{prop-main-0}(a) that $L_x^U$ and
  $L_y^U$ are in the same s.c.c.~of the action of $S$ on the $\L$-classes of $U$.
  Thus, by Proposition~\ref{prop-technical}(b), it follows that $S_{L_x^U}$ and
  $S_{L_y^U}$ are conjugate subgroups of the symmetric group on $U$, and so, in
  particular, are isomorphic. 
  \smallskip

  \textbf{(b).} 
  The set $R_x^S$ is partitioned by the sets $R_x^S\cap L_y^U=R_{y}^S\cap
  L_{y}^U$ for all $y\in R_x^S$. By Proposition~\ref{prop-main-1}(b),
  $|R_{y}^S\cap L_{y}^U|=|S_{y}|$ and by part (a), $|S_{y}|= |S_{x}|$ for all
  $y\in R_x^S$. Thus $|R_x^S|$ equals $|S_{x}|$ multiplied by the number of
  distinct $\L^U$-classes of elements of $R_x^S$.
  Proposition~\ref{prop-main-0}(a) says that $\set{L_{y}^U}{y\in R_x^S}$ is a
  s.c.c.~of the right action of $S$ on $\set{L_x^U}{x\in S}$.\smallskip

  \textbf{(c).} 
  This follows immediately from parts (a) and (b).
  \smallskip

  \textbf{(d).} 
  Let $s\in L_x^U\cap R_x^S$ be arbitrary. Then $s\R^Sx\R^Sy$ and 
  $su\L^U xu\L^Uy$, and so, by Corollary~\ref{cor-greens-simple}(a), 
  $su\R^S y$, i.e. $su\in L_y^U\cap R_x^S$. 
  By Proposition~\ref{prop-technical}(a), there exists $\overline{u}\in S^1$
  such that $su\overline{u}=s$ for all $s\in L_x^U\cap R_x^S$. Therefore
  $s\mapsto su$ and $t\mapsto t\overline{u}$ are mutually inverse bijections
  from $L_x^U\cap R_x^S$ to $L_y^U\cap R_x^S$ and
  back.
\end{proof}\smallskip

For the sake of completeness, we state the analgoue of
Corollary~\ref{cor-collect} for $\L$-classes. 

\begin{cor} \label{cor-collect-analogue}
  If $x,y\in S$ are regular elements of $U$, then the following hold: 
  \begin{enumerate}[\rm (a)]   

    \item
      If $x\L^{S}y$, then ${}_{R_x^U}S$ and ${}_{R_y^U}S$ are conjugate
      subgroups of $\sym(U)$. In particular, ${}_xS$ and ${}_yS$ are
      isomorphic;

    \item
      $|L_x^S|$ equals the size of the group ${}_{x}S$ multiplied by
      the size of the s.c.c.~of $R_{x}^U$ under the action of $S$ on
$\set{R_x^U}{x\in S}$;

    \item
      If $R_{x}^U\sim R_{y}^U$, then $|L^S_x|=|L^S_y|$;

    \item
      If $x\L^{S}y$ and $u\in S ^ 1$ is such that $u\cdot R_{x}^U = R_{y}^U$,
      then the function from $R_x^U\cap L_x^S$ to $R_y^U\cap L_x^S$ defined by
      $s\mapsto us$ is a bijection.
  \end{enumerate}
\end{cor}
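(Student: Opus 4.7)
The plan is to dualize the proof of Corollary~\ref{cor-collect}, replacing each right-action ingredient by its left-action counterpart: Proposition~\ref{prop-main-0}(b) in place of (a), Proposition~\ref{prop-main-1.5}(b) in place of Proposition~\ref{prop-main-1}(b), Corollary~\ref{cor-greens-simple}(b) in place of (a), and the left-action versions of Proposition~\ref{prop-technical}(a) and (b) whose existence is noted in the paragraph following Proposition~\ref{prop-technical}.

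For part (a), since $x\L^S y$, Proposition~\ref{prop-main-0}(b) places $R_x^U$ and $R_y^U$ in the same strongly connected component of the left action of $S$ on $\set{R_z^U}{z\in S}$. The left-action analogue of Proposition~\ref{prop-technical}(b) then asserts that ${}_{R_x^U}S$ and ${}_{R_y^U}S$ are conjugate subgroups of $\sym(U)$, and in particular isomorphic; since $\r{x}$ and $\r{y}$ are faithful, the images ${}_xS$ and ${}_yS$ are isomorphic as well.

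For part (b), I would partition $L_x^S$ by $\R^U$-class:
$$L_x^S=\bigsqcup_{y\in L_x^S}(L_x^S\cap R_y^U)=\bigsqcup_{y\in L_x^S}(L_y^S\cap R_y^U),$$
so that each block has size $|{}_yS|$ by Proposition~\ref{prop-main-1.5}(b), and these cardinalities all equal $|{}_xS|$ by (a). By Proposition~\ref{prop-main-0}(b), the number of blocks coincides with the size of the s.c.c.\ of $R_x^U$ under the left action of $S$, which yields (b). Part (c) is then immediate from (a) and (b).

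Finally, for (d), let $s\in R_x^U\cap L_x^S$. Then $us\in u\cdot R_x^U=R_y^U$, so $us\R^U y$; combined with $s\L^S x\L^S y$, Corollary~\ref{cor-greens-simple}(b) gives $us\L^S y$, so $us\in R_y^U\cap L_x^S$. The left-action version of Proposition~\ref{prop-technical}(a) supplies $\overline{u}\in S^1$ such that $\overline{u}u$ acts as the identity on $R_x^U$ and $u\overline{u}$ acts as the identity on $R_y^U$, which makes $s\mapsto us$ and $t\mapsto\overline{u}t$ mutually inverse bijections between $R_x^U\cap L_x^S$ and $R_y^U\cap L_x^S$. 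The only real obstacle here is bookkeeping: ensuring that each dualized statement we invoke has in fact been set up earlier in the paper, rather than having to recast the arguments from scratch.
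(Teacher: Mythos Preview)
Your proposal is correct and is exactly the intended approach: the paper states this corollary without proof, introducing it simply as ``the analogue of Corollary~\ref{cor-collect} for $\L$-classes,'' so the expected argument is precisely the dualization you carry out, using Proposition~\ref{prop-main-0}(b), Proposition~\ref{prop-main-1.5}(b), Corollary~\ref{cor-greens-simple}(b), and the left-action version of Proposition~\ref{prop-technical} (whose existence the paper explicitly notes).
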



\subsection{Membership testing}

Let $U$ be a semigroup and let $S$ be a subsemigroup of $U$. The next
proposition shows that testing membership in an $\R$-class of $S$ is equivalent
to testing membership in a stabiliser of an $\L$-class of $U$.  Since the latter
is a group, this reduces the problem of membership testing in $\R^S$-classes to
that of membership testing in a group, so we can then take advantage of
efficient algorithms from computational group theory; such as the Schreier-Sims
algorithm \cite[Section 4.4]{B.-Eick2004aa}.

\begin{prop}
  \label{prop-R-membership}
  Suppose that $x\in S$ and there is $x'\in U$ with $xx'x=x$. If $y\in U$ is
  arbitrary, then $y \R^S x$ if and only if $y \R^U x$, $L_{y}^U\sim
  L_{x}^U$, and $(x'yv)\l{x}\in S_{x}$ where $v\in S^1$ is any element such
  that $L_{y}^U\cdot v=L_{x}^U$.
\end{prop}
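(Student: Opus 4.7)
The plan is a standard two-direction argument, where both directions hinge on the isomorphism $\phi:S_x\to L_x^U\cap R_x^S$ from Proposition~\ref{prop-main-1} and on the fact that $xx'$ is a left identity for $R_x^U$ (so that $xx'y=y$ whenever $y\R^U x$, which guarantees $xx'yv=yv$ and hence that $x'yv$ really stabilises $L_x^U$ in $U$, making $(x'yv)\l{x}$ well-defined).

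For the forward direction, I would assume $y\R^S x$. Containment $\R^S\subseteq\R^U$ gives $y\R^U x$ for free. Proposition~\ref{prop-main-0}(a) says that $\{L_z^U:z\R^S x\}$ is an s.c.c.\ of the right action of $S$ on $\L^U$-classes, so $L_y^U\sim L_x^U$, and any witness $v\in S^1$ with $L_y^U\cdot v=L_x^U$ satisfies $yv\L^U x$. Combining $y\R^S x$ with $yv\L^U x$, Corollary~\ref{cor-greens-simple}(a) upgrades this to $yv\R^S x$, so $yv\in L_x^U\cap R_x^S$. Applying $\phi^{-1}$ from Proposition~\ref{prop-main-1}(c) yields $(x'yv)\l{x}=(yv)\phi^{-1}\in S_x$.

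For the reverse direction, I would assume $y\R^U x$, $L_y^U\sim L_x^U$, and $(x'yv)\l{x}\in S_x$ for the given $v$. By definition of $S_x$ there exists $t\in\stab_S(L_x^U)$ with $(t)\l{x}=(x'yv)\l{x}$. Since $x\in L_x^U$, Lemma~\ref{lem-free} gives $xt=xx'yv$, and because $y\R^U x$ the left identity property $xx'y=y$ reduces this to $xt=yv$. By Proposition~\ref{prop-main-1}(b), $xt\in L_x^U\cap R_x^S$, so $yv\R^S x$. Finally, because $L_y^U\sim L_x^U$, Proposition~\ref{prop-technical}(a) supplies $\ov v\in S^1$ with $yv\ov v=y$; this, together with $v\in S^1$, shows $y$ and $yv$ are mutually reachable from each other under right multiplication by elements of $S^1$, hence $y\R^S yv\R^S x$.

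The only subtle point I expect is bookkeeping around $U$ versus $S$: the element $x'$ lives in $U$, so one must verify that $x'yv\in\stab_U(L_x^U)$ before talking about $(x'yv)\l{x}$, and that in the reverse direction $L_y^U\sim L_x^U$ refers to the $S$-action (so that Proposition~\ref{prop-technical}(a) actually produces $\ov v\in S^1$, not merely in $U^1$). Both are handled by the identity $xx'y=y$ and by the fact that the hypothesis is stated in terms of the $S$-action on $\L^U$-classes. Everything else is a direct translation through $\phi$ and $\phi^{-1}$.
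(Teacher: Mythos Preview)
Your proposal is correct and follows essentially the same route as the paper's proof. The only cosmetic differences are that the paper cites Corollary~\ref{cor-collect}(d) (rather than Corollary~\ref{cor-greens-simple}(a)) to place $yv\in L_x^U\cap R_x^S$ in the forward direction, and in the reverse direction it invokes Lemma~\ref{lem-greens-simple}(a) (rather than Proposition~\ref{prop-technical}(a) directly) to conclude $y\R^S yv$; your added remark that $xx'y=y$ is needed to make $(x'yv)\l{x}$ well-defined is a nice point the paper leaves implicit.
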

\begin{proof}
  ($\Rightarrow$) 
  Since $R_x^S\subseteq R_x^U$, $y \R^U x$ and from
  Proposition~\ref{prop-main-0}(a), $L_{y}^U\sim L_{x}^U$.
  Suppose that $v\in S^1$ is such that $L_{y}^U\cdot v= L_{x}^U$. Then, by
  Corollary~\ref{cor-collect}(d),  $yv\in L_x^U\cap R_x^S$ and so, by
  Proposition~\ref{prop-main-1}(c), $(x'yv)\l{x}\in S_x$. 
  \smallskip

  ($\Leftarrow$) Since $y\in R_x^U$ and $xx'$ is a left identity in its
  $\R^U$-class, it follows that $xx'y=y$.  Suppose that $v\in S^1$ is any element
  such that $L_{y}^U\cdot v=L_{x}^U$ (such an element exists by
  assumption). Then, by assumption, $(x'yv)\mu_x\in S_x$ and so by 
  Proposition~\ref{prop-main-1}(b), $yv = x \cdot x'yv\in L_x^U\cap R_x^S$.
  But $L_{y}^U\sim L_{yv}^U$, and so, by
  Lemma~\ref{lem-greens-simple}(a), $y\R^S yv$, and so $x\R^Syv\R^Sy$, as
  required. 
\end{proof}\smallskip

The following corollary of Proposition~\ref{prop-R-membership} will be
important in Section~\ref{section-algorithms}.

\begin{cor}
  \label{cor-R-membership}
  Let $x\in S$ be such that there is $x'\in U$ with $xx'x=x$, and let $y\in U$
  be such that there exist $u\in S^1$ and $v\in U^1$ with $L_{x}^U \cdot u =
  L_{y}^U$ and $xuv = x$. Then $y \R^S x$ if and only if $y \R^U x$,
  $L_{y}^U\sim L_{x}^U$, and $(x'yv)\mu_x\in S_x$.
\end{cor}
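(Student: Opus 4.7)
The plan is to deduce this corollary from Proposition~\ref{prop-R-membership} by showing that, under the given hypotheses, the condition ``$(x'yv)\l{x} \in S_x$'' with $v \in U^1$ coincides with ``$(x'y\bar u)\l{x} \in S_x$'' for a suitable $\bar u \in S^1$ to which Proposition~\ref{prop-R-membership} applies directly. In either direction of the biconditional one has $y\R^U x$ and $L_y^U \sim L_x^U$ --- trivially and via Proposition~\ref{prop-main-0}(a) for $(\Rightarrow)$, and by assumption for $(\Leftarrow)$ --- so Proposition~\ref{prop-technical}(a) yields $\bar u \in S^1$ with $L_y^U \cdot \bar u = L_x^U$ and $(u\bar u)|_{L_x^U} = \id_{L_x^U}$; in particular, $xu\bar u = x$. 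This $\bar u$ is then substituted for the element ``$v$'' appearing in Proposition~\ref{prop-R-membership}.

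The technical crux, and what I expect to be the main work, is proving $yv = y\bar u$ as elements of $U$. Since $xuv = x$ by hypothesis and $xu\bar u = x$ by construction, $v$ and $\bar u$ agree at the $\L^U$-class representative $xu$. For any $z \in L_y^U = L_{xu}^U$ one has $z \L^U xu$, so $z = \rho xu$ for some $\rho \in U^1$, giving
$$zv = \rho xuv = \rho x = \rho xu\bar u = z\bar u.$$
Setting $z = y$ yields $yv = y\bar u$, whence $x'yv = x'y\bar u$ as elements of $U$, and therefore the conditions ``$(x'yv)\l{x} \in S_x$'' and ``$(x'y\bar u)\l{x} \in S_x$'' are the same (and are simultaneously well-defined). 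The corollary then follows in both directions by applying Proposition~\ref{prop-R-membership} with $\bar u$ playing the role of its ``$v$''.
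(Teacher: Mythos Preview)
Your proposal is correct and follows essentially the same route as the paper: obtain $\bar u\in S^1$ from Proposition~\ref{prop-technical}(a), use $xu\bar u=x=xuv$ together with $y\in L_{xu}^U$ to deduce $y\bar u=yv$ (via the very argument you spell out, which is the content the paper alludes to when it says ``$z\bar u=zv$ for all $z\in L_{xu}$''), and then invoke Proposition~\ref{prop-R-membership} with $\bar u$ in place of its ``$v$''. The only cosmetic difference is that you make the $z=\rho\,xu$ step explicit, whereas the paper leaves it implicit.
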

\begin{proof}
  ($\Rightarrow$) That $y\R^U x$ and $L_{y}^U\sim L_{x}^U$ follows from
  Proposition~\ref{prop-R-membership}. Suppose $u\in S$ and $v\in U$ are such
  that $L_{x}^U \cdot u = L_{y}^U$ and $xuv = x$.  By
  Proposition~\ref{prop-technical}(a) there exists $\ov{u}\in S^1$ such that
  $xu\ov{u} = x = xuv$. It follows that $z\ov{u} = zv$ for all $z\in L_{xu}^S$,
  and, in particular, $y\ov{u} = yv$.  Hence, by
  Proposition~\ref{prop-R-membership}, $(x'yv)\mu_x = (x'y\ov{u})\mu_x  \in
  S_x$.

  ($\Leftarrow$) It suffices by Proposition~\ref{prop-R-membership} to show
  that there exists $w\in S^1$ such that $L_{y}^U\cdot w= L_{x}^U$ and
  $(x'yw)\l{x}\in S_{x}$. Since $L_{x}^U\sim
  L_{y}^U$ and $L_{x}^U \cdot u = L_{y}^U$, by
  Proposition~\ref{prop-technical}(a) and Lemma~\ref{lem-free}, there exists
  $\ov{u}\in S^1$ such that $xu\ov{u} = x = xuv$. Hence, as above, $y\ov{u} =
  yv$ and so $(x'y\ov{u})\mu_x = (x'yv)\mu_x \in S_x$, as required.
\end{proof}

We state an analogue of Proposition~\ref{prop-R-membership} for
$\L$-classes, with a slight difference. 

\begin{prop}
  \label{prop-L-membership}
  Suppose that $x\in S$ and there is $x'\in U$ with $xx'x=x$. If $y\in U$ is
  arbitrary, then $y\L^S x$ if and only if $y \L^U x$, $R_{y}^U\sim R_{x}^U$,
  and $(x'vy)\l{x}\in ({}_{x}S)\Psi$ where $v\in S^1$ is any element such that
  $v\cdot R_{y}^U=R_{x}^U$ and $\Psi:{}_{x}S\to U_{x}$ defined by
  $((s)\r{x})\Psi= (x'sx)\l{x}$ is the embedding from
  Proposition~\ref{prop-main-2}(a).
\end{prop}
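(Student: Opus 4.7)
The plan is to argue by analogy with Proposition~\ref{prop-R-membership}, but with the twist that since we demanded a faithful representation via $\l{x}$ (not $\r{x}$), membership of $(x'vy)\l{x}$ must be tested inside $U_x$, which is why the condition involves the image $({}_{x}S)\Psi$ rather than ${}_{x}S$ itself. Throughout, I would use the factorisation $\Psi=\phi\theta$ from the proof of Proposition~\ref{prop-main-2}(a), where $\phi:{}_{x}S\to L_x^S\cap R_x^U$ is the isomorphism of Proposition~\ref{prop-main-1.5}(b) and $\theta:L_x^U\cap R_x^U\to U_x$ is the isomorphism of Proposition~\ref{prop-main-1}(c) applied to $U$.

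For the forward direction, suppose $y\L^S x$. Then $y\L^U x$ is immediate, and Proposition~\ref{prop-main-0}(b) gives $R_y^U\sim R_x^U$. Fix $v\in S^1$ with $v\cdot R_y^U=R_x^U$. By Corollary~\ref{cor-collect-analogue}(d), $vy\in R_x^U\cap L_x^S$, so Proposition~\ref{prop-main-1.5}(c) yields $(vy)\phi^{-1}=(vyx')\r{x}\in{}_{x}S$. Applying $\Psi$ gives $((vyx')\r{x})\Psi=(x'vyx'x)\l{x}$. Since $vy\in L_x^U$ and $x'x$ is a right identity on $L_x^U$, we have $vyx'x=vy$, and hence $(x'vy)\l{x}\in({}_{x}S)\Psi$.

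For the converse, suppose $y\L^U x$, $R_y^U\sim R_x^U$, and $(x'vy)\l{x}\in({}_{x}S)\Psi$ where $v\cdot R_y^U=R_x^U$. Note $vy\in R_x^U\cap L_x^U=H_x^U$. By hypothesis, there exists $s\in\stab_S(R_x^U)$ with $(x'sx)\l{x}=(x'vy)\l{x}$. By Lemma~\ref{lem-free}, taking $x\in L_x^U$ as the test element, $xx'sx=xx'vy$, and since $xx'$ is a left identity on $R_x^U$ while $sx,vy\in R_x^U$, this collapses to $sx=vy$. Then Proposition~\ref{prop-main-1.5}(b) tells us $sx\in L_x^S$, so $vy\L^S x$. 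Finally, the left-action analogue of Proposition~\ref{prop-technical}(a) produces $\ov{v}\in S^1$ with $\ov{v}v\cdot y=y$, so $y=\ov{v}\cdot vy\in S^1\cdot vy$; combined with $vy\L^S x$ this forces $S^1\cdot y=S^1\cdot x$ (in particular, $y\in S$), i.e.\ $y\L^S x$.

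The main delicacy is the identity $vyx'x=vy$ in the forward direction: it requires precisely that $x'x$ is a right identity on $L_x^U$ (a consequence of $xx'x=x$), and without it the image under $\Psi$ would not match the prescribed form. The backward direction is comparatively mechanical, but leans on the pseudo-inverse $\ov{v}$ guaranteed by the connectedness assumption $R_y^U\sim R_x^U$ to reconstruct $y$ from $vy$.
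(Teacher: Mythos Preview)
Your proof is correct and follows essentially the same route as the paper. The paper's argument is condensed into two lines: it invokes the direct left--right dual of Proposition~\ref{prop-R-membership} to get the equivalent condition $(vyx')\r{x}\in{}_{x}S$, and then observes that applying the embedding $\Psi$ (which is injective) converts this to $(x'vy)\l{x}\in({}_{x}S)\Psi$ via the same simplification $vyx'x=vy$ you make. Your write-up simply unpacks both directions of that dual explicitly rather than appealing to it as a known statement.
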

\begin{proof}
  The direct analogue of Proposition~\ref{prop-R-membership} states that
  $y\L^Sx$ if and only if  $y \L^U x$, $R_{y}^U\sim R_{x}^U$, and
  $(vyx')\r{x}\in {}_{x}S$. The last part is equivalent to 
  $((vyx')\r{x})\Psi=(x'vyx'x)\l{x}=(x'vy)\l{x}\in ({}_{x}S)\Psi$, as required. 
\end{proof}

Propositions~\ref{prop-R-membership} and~\ref{prop-L-membership} allow us to
express the elements of an $\R^S$- and $\L^S$-class in a particular form, which
will be of use in the algorithms later in the paper. 

\begin{cor} \label{cor-elements}
  Suppose that $x\in S$ and there is $x'\in U$ with $xx'x=x$. Then 
  \begin{enumerate}[\rm(a)]
    \item 
      if $\mathcal{U}$ is any subset of $S^1$ such that 
      $\set{L_{xu}^U}{u\in \mathcal{U}} = \set{L_{y}^U}{L_{y}^U \sim
      L_{x}^U}$, then 
      $$R_x^S  = \set{xsu}{s\in \stab_S(L_x^U),\ u\in \mathcal{U}};$$
    \item 
      if $\mathcal{V}$ is any subset of $S^1$ such that 
      $\set{R_{vx}^U}{v\in \mathcal{V}} = \set{R_{y}^U}{R_{y}^U \sim
      R_{x}^U}$, then 
      $$L_x^S = \set{vtx}{t\in \stab_S(R_x^U),\ v\in \mathcal{V}}.$$
  \end{enumerate}
\end{cor}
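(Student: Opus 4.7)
The plan is to prove part (a) directly from the machinery of Propositions~\ref{prop-main-0} and~\ref{prop-main-1} together with Corollary~\ref{cor-collect}(d), and then to obtain (b) by a dual argument.

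For the inclusion ($\supseteq$), fix $s\in\stab_S(L_x^U)$ and $u\in\mathcal{U}$. I would first apply Proposition~\ref{prop-main-1}(b) to conclude that $xs\in L_x^U\cap R_x^S$; in particular $L_{xs}^U=L_x^U$ and $xs\R^S x$. Combining this with the hypothesis $L_{xu}^U\sim L_x^U$ gives
$$L_{xsu}^U = L_{xs}^U\cdot u = L_x^U\cdot u = L_{xu}^U \sim L_x^U = L_{xs}^U,$$
so Lemma~\ref{lem-greens-simple}(a) delivers $xs\R^S xsu$, and by transitivity $xsu\in R_x^S$.

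For the inclusion ($\subseteq$), I would take an arbitrary $y\in R_x^S$. Proposition~\ref{prop-main-0}(a) forces $L_y^U\sim L_x^U$, and so by the hypothesis on $\mathcal{U}$ there exists $u\in\mathcal{U}$ with $L_y^U=L_{xu}^U$. This same $u$ satisfies $L_x^U\sim L_{xu}^U$, hence $x\R^S xu$ by Lemma~\ref{lem-greens-simple}(a). I would then invoke Corollary~\ref{cor-collect}(d), with $xu$ playing the role of the `$y$' appearing there, to obtain a bijection $L_x^U\cap R_x^S \to L_{xu}^U\cap R_x^S$ given by $z\mapsto zu$. Since $y$ lies in the latter set, there is $z\in L_x^U\cap R_x^S$ with $y=zu$, and Proposition~\ref{prop-main-1}(b) writes $z=xs$ for some $s\in\stab_S(L_x^U)$, whence $y=xsu$.

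Part (b) is the dual statement, proved verbatim but with Propositions~\ref{prop-main-0}(b), \ref{prop-main-1.5}(b), Corollary~\ref{cor-collect-analogue}(d), and Lemma~\ref{lem-greens-simple}(b) replacing their right-handed counterparts. The only subtlety is to verify, before applying Corollary~\ref{cor-collect}(d), that $x\R^S xu$; but this follows for free from the hypothesis on $\mathcal{U}$ via Lemma~\ref{lem-greens-simple}(a), so there is no real obstacle beyond careful bookkeeping of which lemma supplies which ingredient.
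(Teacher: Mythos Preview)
Your proof is correct and follows essentially the same route as the paper. The only difference is cosmetic: for the inclusion $R_x^S\subseteq\{xsu\}$ the paper works directly with Proposition~\ref{prop-technical}(a) and Proposition~\ref{prop-R-membership} to produce $\ov{u}$ and then write $y=x\cdot s\cdot u$ with $xs=y\ov{u}$, whereas you obtain the same decomposition by invoking the bijection of Corollary~\ref{cor-collect}(d) (whose proof is precisely that argument) and then Proposition~\ref{prop-main-1}(b).
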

\begin{proof}
  We prove only part (a), since part (b) is dual.
  
  Let $s\in \stab_S(L_x^U)$ and let $u\in \mathcal{U}$ be arbitrary.
  Since $L_{xsu}^U = L_{xu}^U \sim
  L_{x}^U$, it follows, by Lemma~\ref{lem-greens-simple}(a), that $xsu\R^Sx$.

  If $y\R^S x$, then, by Proposition~\ref{prop-main-0}, $L_{y}^U\sim
  L_{x}^U$, and so there exists $u\in \mathcal{U}$ such that
  $L_{x}^U\cdot u = L_{y}^U$.  Since $xx'$ is a left identity for
  $R_x^U$, $xx'y = y$.  By Proposition~\ref{prop-technical}(a), there is
  $\ov{u}\in S^1$ such that $L_{y}^U\cdot \ov{u}= L_{x}^U$ and
  $y\ov{u}u=y$.  Hence, by Proposition~\ref{prop-R-membership}, $x'y\ov{u}\in
  \stab_S(L_x^U)$ and $y = x\cdot x'y\ov{u}\cdot u$. 
\end{proof}

We will prove the analogue of Proposition~\ref{prop-R-membership} for $\D$-classes,
for which we require following proposition.

\begin{prop}[cf. Theorem 6.2 in \cite{Linton1997aa}]
  \label{prop-D-class-elements-1}
  If $x\in S$ is such that there is $x'\in U$ with $xx'x=x$, then 
  $D_x^S\cap H_x^U = \set{sxt}{s\in \stab_S(R_{x}^U),\ t\in \stab_S(L_x^U)}$.
\end{prop}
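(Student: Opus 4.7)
My plan is to prove both inclusions separately, relying on the structural decompositions of $\R^S$- and $\L^S$-classes already established.

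For $\supseteq$, fix $s\in\stab_S(R_x^U)$ and $t\in\stab_S(L_x^U)$, and first show $sxt\in H_x^U$. Since $L_x^U\cdot t=L_x^U$, Proposition~\ref{prop-technical}(a) supplies $\ov{t}\in S^1$ with $xt\ov{t}=x$; dually, the left-action version supplies $\ov{s}\in S^1$ with $\ov{s}sx=x$. Left-multiplying the first equation by $s$ gives $sxt\ov{t}=sx$, and right-multiplying the second by $t$ gives $\ov{s}sxt=xt$. Since $sx\R^U x$ we have $sxU^1=xU^1$, so $sxt=(sx)t\in xU^1$ and $sx\in sxt\cdot U^1$; together these yield $sxtU^1=xU^1$, i.e.\ $sxt\R^U x$. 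The symmetric argument using $\ov{s}$ gives $sxt\L^U x$, so $sxt\in H_x^U$. For $sxt\D^S x$, observe that $L_{xt}^U=L_x^U$ and $R_{sx}^U=R_x^U$, so Lemma~\ref{lem-greens-simple}(c) applies trivially.

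For $\subseteq$, let $y\in D_x^S\cap H_x^U$. Since $S$ is finite, $\D^S=\R^S\circ\L^S$, so there is $z\in S$ with $y\R^S z\L^S x$. Then $z\L^U x$ (from $z\L^S x$) and $z\R^U x$ (by transitivity of $\R^U$, combining $y\R^U z$ with $y\R^U x$), so $z\in H_x^U$; since $x$ is regular in $U$ and $z\D^U x$, $z$ is also regular in $U$. Proposition~\ref{prop-main-1.5}(b) applied to $z\in L_x^S\cap R_x^U$ produces $s\in\stab_S(R_x^U)$ with $z=sx$, and Proposition~\ref{prop-main-1}(b) applied to $z$, using $y\in L_z^U\cap R_z^S$, produces $t\in\stab_S(L_z^U)=\stab_S(L_x^U)$ with $y=zt$. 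Hence $y=sxt$, as required.

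The principal subtlety is the forward direction's claim that $sxt\in H_x^U$: since $\L^U$ is only a right congruence and $\R^U$ only a left congruence, the stabiliser hypotheses alone do not yield $sxt\H^U x$ by a direct congruence argument, and the Schreier-style pseudo-inverses $\ov{s},\ov{t}$ supplied by Proposition~\ref{prop-technical} are essential to close the gap.
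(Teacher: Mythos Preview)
Your proof is correct and follows essentially the same structure as the paper's: both inclusions are handled by passing through an intermediate element in $L_x^S$ that is $\R^S$-related to the target.

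The one notable difference is in the $\supseteq$ direction. You invoke the Schreier-style pseudo-inverses $\ov{s},\ov{t}$ from Proposition~\ref{prop-technical} and argue at the level of principal ideals in $U$. The paper instead uses Lemma~\ref{lem-greens-simple} to obtain $xt\R^S x$ and $sx\L^S x$ directly (since $L_{xt}^U=L_x^U$ and $R_{sx}^U=R_x^U$), and then applies the congruence properties: $\R^S$ is a left congruence, so $sxt\R^S sx\R^U x$; $\L^S$ is a right congruence, so $sxt\L^S xt\L^U x$. This immediately gives $sxt\in H_x^U$ without needing $\ov{s}$ or $\ov{t}$. So the ``principal subtlety'' you flag is not actually a gap that needs the Schreier machinery to close---the congruence argument does work, provided one first strengthens $xt\L^U x$ to $xt\R^S x$ via Lemma~\ref{lem-greens-simple}(a). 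Your route is valid but heavier than necessary.

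For $\subseteq$, the paper verifies directly that the multipliers $s,t$ lie in the stabilisers, while you appeal to the surjectivity of the isomorphisms in Propositions~\ref{prop-main-1}(b) and~\ref{prop-main-1.5}(b); these are equivalent arguments.
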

\begin{proof}
  Let $s\in \stab_S(R_x^U)$ and $t\in \stab_S(L_x^U)$ be arbitrary.
  It follows that $L_{xt}^U = L_{x}^U$ and $R_{sx}^U = R_{x}^U$, and so,
  by Lemma~\ref{lem-greens-simple}, $xt\R^S x$, $sx\L^Sx$, and $x\D^S sxt$.
  Also $xt\R^Sx$ implies that $sxt\R^Ssx\R^Ux$ and 
  $sx\L^Sx$ implies $sxt\L^Sxt\L^Ux$, and so $sxt\in H_x^U$, as required. 

  For the other inclusion, let $y\in D_x^S\cap H_x^U$ be arbitrary.
  Then $x\D^Sy$ and so there is $s\in S^1$ such that $sx\L^S x$ and $sx\R^Sy$.
  Hence
  $s\cdot R_x^U=R_{sx}^U=R_{y}^U=R_x^U$
  and so $s\in \stab_S(R_x^U)$. Since $sx\R^Sy$, there exists $t\in S^1$ such that 
  $sxt=y$ and so
  $L_x^U\cdot t=L_{sx}^U\cdot t=L_y^U=L_x^U,$
  which implies that $t\in \stab_S(L_x^U)$, as required. 
\end{proof}

\begin{prop}
  \label{prop-D-membership}
  Suppose that $x\in S$ and there is $x'\in U$ with $xx'x=x$. If $y\in U$ is
  arbitrary, then $y\D^S x$ if and only if $L_{y}^U\sim L_{x}^U$,
  $R_{y}^U\sim R_{x}^U$, and for any $u, v\in S^1$ such that $L_{y}^U\cdot
  u=L_{x}^U$ and $v\cdot R_{y}^U=R_{x}^U$ there exists $t\in \stab_S(L_x^U)$
  such that 
  $$(x'vyu)\l{x}\cdot ((t)\l{x}) ^ {-1}\in({}_{x}S)\Psi,$$ 
  where $\Psi:{}_{x}S\to U_{x}$, defined by $((s)\r{x})\Psi= (x'sx)\l{x}$,
  is the embedding from Proposition~\ref{prop-main-2}(a). 
\end{prop}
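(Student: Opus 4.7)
The plan is to reduce the $\D^S$-membership question to the special case described in Proposition~\ref{prop-D-class-elements-1} by translating $y$ into $H_x^U$ via the given elements $u,v\in S^1$. The key intermediate object is $vyu$: I will show that $y\D^S x$ if and only if $vyu\in H_x^U\cap D_x^S$, and then use Proposition~\ref{prop-D-class-elements-1} to match this to the condition $(x'vyu)\l{x}\cdot((t)\l{x})^{-1}\in({}_xS)\Psi$.

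For the $(\Rightarrow)$ direction, first observe that since $\L^S\subseteq\L^U$ and $\R^S\subseteq\R^U$, any witness $z\in S$ with $x\R^S z\L^S y$ yields $R_z^U=R_x^U$ and $L_z^U=L_y^U$, whence Proposition~\ref{prop-main-0} gives $L_y^U\sim L_x^U$ and $R_y^U\sim R_x^U$. Given $u,v$ as in the statement, Proposition~\ref{prop-technical}(a) supplies $\ov{u},\ov{v}\in S^1$ with $yu\ov{u}=y$ and $\ov{v}vy=y$. Using that $\L^S$ is a right congruence and $\R^S$ is a left congruence, these identities give $y\R^S yu\L^S vyu$, so $vyu\D^S y\D^S x$. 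That $vyu\in H_x^U$ follows from Green's Lemma applied in $U$ to the mutually inverse bijections determined by $v,\ov{v}$ and $u,\ov{u}$. By Proposition~\ref{prop-D-class-elements-1}, $vyu=sxt$ for some $s\in\stab_S(R_x^U),\,t\in\stab_S(L_x^U)$. Then
\[
(x'vyu)\l{x}=(x'sxt)\l{x}=(x'sx)\l{x}\cdot(t)\l{x}=((s)\r{x})\Psi\cdot(t)\l{x},
\]
so $(x'vyu)\l{x}\cdot((t)\l{x})^{-1}=((s)\r{x})\Psi\in({}_xS)\Psi$, as desired.

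For the $(\Leftarrow)$ direction, the same Green's Lemma argument (which uses only the hypotheses $L_y^U\sim L_x^U$, $R_y^U\sim R_x^U$ and Proposition~\ref{prop-technical}(a), not that $y\in D_x^S$) shows that $vyu\in H_x^U$. From the hypothesis, pick $s\in\stab_S(R_x^U)$ with $((s)\r{x})\Psi=(x'vyu)\l{x}\cdot((t)\l{x})^{-1}$, so that $(x'vyu)\l{x}=(x'sxt)\l{x}$. Lemma~\ref{lem-free} then yields $xx'vyu=xx'sxt$. Since $xx'$ is a left identity on $R_x^U$, and both $vyu$ and $sxt$ lie in $R_x^U$ (the latter because $sx\in R_x^U$), this simplifies to $vyu=sxt$. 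Proposition~\ref{prop-D-class-elements-1} therefore gives $vyu\in D_x^S$. Finally, the same congruence argument as in the forward direction, using the $\ov{u},\ov{v}$ produced by Proposition~\ref{prop-technical}(a), gives $y\D^S vyu\D^S x$.

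The main technical obstacle is keeping the $U$-relations and $S$-relations distinct: the stabilisers $\stab_S(L_x^U)$ and $\stab_S(R_x^U)$ are defined by the action on $\L^U$- and $\R^U$-classes, but the conclusion concerns $\D^S$, and the bridge between them is precisely the fact (encapsulated in Propositions~\ref{prop-main-0} and~\ref{prop-D-class-elements-1}) that strong-connected-component equivalence in the action on Green's classes of $U$ corresponds exactly to Green's relations in $S$. The other delicate point is carefully distinguishing which equalities hold on the nose in $U$ and which only hold after applying $\l{x}$, which is why Lemma~\ref{lem-free} must be used to pass between $(x'vyu)\l{x}=(x'sxt)\l{x}$ and $xx'vyu=xx'sxt$ rather than hoping for the stronger equality $vyu=sxt$ directly.
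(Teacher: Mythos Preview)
Your argument is correct and mirrors the paper's proof closely: both translate $y$ to $vyu\in H_x^U\cap D_x^S$, invoke Proposition~\ref{prop-D-class-elements-1} to write $vyu=sxt$, and use Lemma~\ref{lem-free} to pass between equality under $\l{x}$ and equality in $U$ (the paper uses Lemma~\ref{lem-greens-simple} where you unpack Proposition~\ref{prop-technical}(a) directly, but this is the same computation). One small imprecision: the claim that $sxt\in R_x^U$ ``because $sx\in R_x^U$'' is not quite enough on its own, since right-multiplying by $t\in\stab_S(L_x^U)$ does not obviously preserve $R_x^U$; the cleanest fix is to observe that Proposition~\ref{prop-D-class-elements-1} already places $sxt$ in $H_x^U\subseteq R_x^U$.
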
 
\begin{proof} 
  ($\Rightarrow$) Let $y\in D_x^S$. Then there exists $w\in S$ such that $y\R^S
  w\L^S x$. By Proposition~\ref{prop-main-0}(a) and (b), respectively, it
  follows that $L_{y}^U\sim L_{w}^U=L_{x}^U$, and $R_{x}^U\sim
  R_{w}^U=R_{y}^U$.

  Suppose that $u, v\in S^1$ are any elements such that $L_{y}^U\cdot
  u=L_{x}^U$ and $v\cdot R_{y}^U=R_{x}^U$.  Then, by
  Lemma~\ref{lem-greens-simple}, $vy \L^S y$ and $yu\R^S y$, and so $vyu \L^S yu
  \L^U x$ and $vyu \R^S vy \R^U x$. Thus $vyu \H^U x$ and, since $y \R^S yu \L^S
  vyu$, it follows that $vyu \D^S x$. 

  By Proposition~\ref{prop-D-class-elements-1}, there exist $s\in
  \stab_S(R_x^U)$ and $t\in \stab_S(L_x^U)$ such that $vyu=sxt$. Since $s\in
  \stab_S(R_x^U)$, it follows that 
  $((s)\r{x})\Psi=(x'sx)\l{x}\in({}_{x}S)\Psi$
  (by Proposition~\ref{prop-main-2}(a)). In particular, $x'sx\in
  \stab_{U}(L_x^U)$ and so 
  $(x'vyu)\l{x}=(x'sxt)\l{x}=(x'sx)\l{x}\cdot (t)\l{x}$
  and so 
  $$(x'vyu)\l{x}\cdot ((t)\l{x})^{-1}=(x'sx)\l{x}\in({}_{x}S)\Psi,$$
  as required.
  \smallskip
  
  ($\Leftarrow$)   
  Suppose that $u, v\in S^1$ are any elements such that $L_{y}^U\cdot
  u=L_{x}^U$ and $v\cdot R_{y}^U=R_{x}^U$. 
  Since $L_{y}^U\sim L_{x}^U= L_{yu}^U$ and $R_{y}^U\sim
  R_{x}^U=R_{vy}^U$,  it follows from Lemma~\ref{lem-greens-simple}(c), that 
  $y\D^S vyu$. 
  By assumption, there exist $s\in\stab_S(R_{x}^U)$,
  $t\in \stab_S(L_x^U)$ such that 
  $$(x'vyu)\l{x}=((s)\r{x})\Psi\cdot (t)\l{x}=(x'sxt)\l{x}.$$ 
  In particular, by Lemma~\ref{lem-free}, $xx'vyu=xx'sxt$.  Since $xx'$ is a
  left identity for $R_x^U=R_{vy}^U$, we deduce that $xx'vy=vy$.  Also, by
  Proposition~\ref{prop-D-class-elements-1}, $sxt\in
  D_x^S\cap H^U_x$ implies that $R_{sxt}^U=R_x^U$ and so $xx'sxt=sxt$. Thus
  $y\D^S vyu=xx'vyu=xx'sxt=sxt\D^S x$.  
\end{proof}


\subsection{Classes within classes}

The next two propositions allow us to determine the $\R^S$-, $\L^S$- and
$\H^S$-classes within a given $\D^S$-class in terms of the groups $S_x$, ${}_xS$,
and the action of $S$ on $\set{L_x^U}{x\in S}$ and $\set{R_x^U}{x\in S}$.

If $G$ is a group and $H$ is a subgroup of $G$, then a \textit{left transversal}
of $H$ in $G$ is a set of left coset representatives of $H$ in $G$.
\textit{Right transversals} are defined analogously.

\begin{prop}
  \label{prop-D-class-R-class-reps}
  Suppose that $x\in S$ and there is $x'\in U$ with $xx'x=x$  and that:
  \begin{enumerate}[\rm (a)]
    \item $\mathcal{C}$ is a minimal 
      subset of $\stab_{U}(L_x^U)$ such that $\set{(c)\l{x}}{c\in \mathcal{C}}$
      is a left transversal of $S_{x}\cap({}_{x}S)\Psi$ in $({}_xS)\Psi$ where
      $\Psi:{}_{x}S\to U_{x}$, defined by $((s)\r{x})\Psi= (x'sx)\l{x}$, is the
      embedding from Proposition~\ref{prop-main-2}(a);
    \item $\{u_1, \ldots, u_m\}$ is a minimal 
      subset of $S^1$ such that 
      $\set{u_i\cdot R_{x}^U}{1\leq i\leq m}$ equals the s.c.c.\ of 
      $R_{x}^U$ under the left action of $S$ on $\set{R_{x}^U}{x\in S}$.
  \end{enumerate}
  Then $\set{u_ixc}{c\in \mathcal{C},\ 1\leq i\leq m}$ is a minimal 
  set of $\H^S$-class representatives of $L_x^S$, and hence a minimal
  set of $\R^S$-class representatives for $D_x^S$.
\end{prop}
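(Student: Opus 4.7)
The plan is to establish three facts: (1) every $u_ixc$ lies in $L_x^S\cap R_{u_ix}^U$; (2) distinct pairs $(i,c)$ give $\H^S$-inequivalent elements $u_ixc$; and (3) the total count $m|\mathcal{C}|$ matches the number of $\H^S$-classes inside $L_x^S$. These together yield that the $u_ixc$ form a minimal set of $\H^S$-class representatives of $L_x^S$. The conclusion about $\R^S$-classes in $D_x^S$ is then automatic, since every $\R^S$-class of $D_x^S$ intersects $L_x^S$ in precisely one $\H^S$-class, giving a canonical bijection between the two sets of classes.

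For (1), the hypothesis $(c)\l{x}\in({}_{x}S)\Psi$ together with the definition of $\Psi$ yields an $a\in\stab_S(R_x^U)$ with $(c)\l{x}=(x'ax)\l{x}$; Lemma~\ref{lem-free} and the fact that $xx'$ is a left identity on $R_x^U$ then give $xc=ax$. So $u_ixc=u_iax$, and since $ax\in L_x^S\cap R_x^U$ by Proposition~\ref{prop-main-1.5}(b), Corollary~\ref{cor-collect-analogue}(d) places $u_iax$ in $L_x^S\cap R_{u_ix}^U$. For (3), Proposition~\ref{prop-main-0}(b) together with Corollary~\ref{cor-collect-analogue}(d) exhibits $L_x^S$ as the disjoint union of the sets $L_x^S\cap R_{u_ix}^U$, each of cardinality $|{}_{x}S|$. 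By Proposition~\ref{prop-main-2}(b) and Green's theorem every $\H^S$-class inside $L_x^S$ has size $|S_x\cap({}_{x}S)\Psi|$, so each piece contains $[({}_{x}S)\Psi:S_x\cap({}_{x}S)\Psi]=|\mathcal{C}|$ classes, for a total of $m|\mathcal{C}|$.

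The crux of the argument is (2). If $i\ne j$, then $R_{u_ix}^U\ne R_{u_jx}^U$, forcing $u_ixc$ and $u_jxc'$ into different $\H^U$-classes, hence different $\H^S$-classes. For fixed $i$, apply the element $\ov{u_i}$ from the left-action analogue of Proposition~\ref{prop-technical}(a); since $\R^S$ is a left congruence and $\ov{u_i}u_i$ fixes $R_x^U$ pointwise (which contains $xc_1,xc_2$ by step (1)), the problem reduces to showing that $xc_1\R^S xc_2$ implies $c_1=c_2$. From any equation $xc_1r=xc_2$ with $r\in S^1$, the fact that $xc_1,xc_2\in L_x^U$ forces $r\in\stab_S(L_x^U)$; Lemma~\ref{lem-free} together with the fact that $\l{x}$ is a homomorphism then yields $(c_2)\l{x}=(c_1)\l{x}\cdot(r)\l{x}$ with $(r)\l{x}\in S_x$. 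Since both $(c_1)\l{x}$ and $(c_2)\l{x}$ lie in $({}_{x}S)\Psi$, the element $(r)\l{x}$ also lies in $({}_{x}S)\Psi$, hence in $S_x\cap({}_{x}S)\Psi$. Consequently $(c_1)\l{x}$ and $(c_2)\l{x}$ share a left coset of $S_x\cap({}_{x}S)\Psi$ in $({}_{x}S)\Psi$, and since $\mathcal{C}$ is a left transversal we conclude $c_1=c_2$.

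The principal obstacle will be step (2): making sure the sidedness of cosets comes out on the left, matching the hypothesis on $\mathcal{C}$, and in particular pinning down which reduction (left versus right multiplication by $\ov{u_i}$) produces the correct $\R^S$-equivalence to work with. Once that is in place, the remainder is routine bookkeeping with Propositions~\ref{prop-main-0}--\ref{prop-main-2} and Corollary~\ref{cor-collect-analogue}.
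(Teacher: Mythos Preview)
Your argument is correct and takes a genuinely different route from the paper's proof. Both proofs handle step~(1) the same way (writing $xc=ax$ with $a\in\stab_S(R_x^U)$, so $xc\in L_x^S\cap R_x^U$), and both handle the distinctness step~(2) by essentially the same coset calculation. The divergence is in how you establish that the $u_ixc$ cover \emph{all} the $\H^S$-classes of $L_x^S$. The paper proves this by an explicit surjectivity argument: it first shows, via Proposition~\ref{prop-D-class-elements-1}, that every $y\in D_x^S\cap H_x^U$ is $\R^S$-related to some $xc$, and then reduces an arbitrary $y\in D_x^S$ to this case using the multipliers $\ov{u_i}$ and an element carrying $(y)\lambda$ back to $(x)\lambda$. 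You instead count: Corollary~\ref{cor-collect-analogue} gives $|L_x^S|=m\,|{}_xS|$, Proposition~\ref{prop-main-2}(b) together with Green's Lemma gives each $\H^S$-class size $|S_x\cap({}_xS)\Psi|$, and the quotient is exactly $m|\mathcal{C}|$.

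Your counting approach is shorter and avoids the somewhat delicate manipulation of $y=sxt$ in the paper's surjectivity step. The trade-off is that the paper's proof is constructive: given $y\in D_x^S$, it tells you which $u_ixc$ represents its $\R^S$-class, which is precisely what is needed algorithmically (and is implicitly what Algorithm~\ref{algorithm-R-classes-D-class} relies on). One small point worth making explicit in your write-up is that each slice $L_x^S\cap R_{u_ix}^U$ is a union of whole $\H^S$-classes; this is immediate since $\H^S\subseteq\R^S\subseteq\R^U$, but it is what licenses dividing $|{}_xS|$ by $|S_x\cap({}_xS)\Psi|$ to count classes.
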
 
\begin{proof} 
  We start by proving that for all $c\in \mathcal{C}$, there exists $c^*\in
  \stab_S(R_x^U)$ such that $c^*x = xc$ and that $xc\L^S x$.
  Suppose that $c\in \mathcal{C}$ is arbitrary. Then $(c)\mu_x\in ({}_xS)\Psi$
  and so there exists $c^*\in \stab_S(R_x^U)$ such that $((c^*)\nu_{x})\Psi =
  (x'c^*x)\mu_x = (c)\mu_x$. Hence, by Lemma~\ref{lem-free}, $xx'c^*x = xc$.
  Since $c^*\in \stab_S(R_x^U)$, Proposition~\ref{prop-main-1.5}(b) implies that
  $c^*x\R^Ux$ and $c^*x\L^Sx$, and so $c^*x = xx'c^*x = xc$. It follows that
  $xc = c^*x\L^Sx$. 

  By the assumption in part (b) and by Lemma~\ref{lem-greens-simple}(b), $u_ix
  \L^S x$ for all $i$, and so $u_ixc \L^S xc \L^S x$ for all $i$. Hence it
  suffices to show that $\set{u_ixc}{c\in \mathcal{C},\ 1\leq i\leq m}$ is a
  minimal set of $\R^S$-class representatives for $D_x^S$. 
  In other words, if $y\D^S x$, then we must show that $y\R^S u_ixc$ for some
  $i\in \{1,\ldots, m\}$ and $c\in \mathcal{C}$, and that ($u_ixc,
  u_jxd)\not\in \R^S$ if $i\not = j$ or $c\not = d$. 
  
  We start by showing that for every $y\in D_x^S\cap H_x^U$ there is $c\in
  \mathcal{C}$ such that $y\R^Sxc$.  By
  Proposition~\ref{prop-D-class-elements-1}, there exist $s\in
  \stab_S(R_{x}^U),\ t\in \stab_S(L_x^U)$ such that $y=sxt$.  It follows that
  $sx\R^U x$ and $xt \L^U x$, and so, by Corollary~\ref{cor-greens-simple},
  $sx\L^Sx$ and $xt\R^S x$.  Thus $sx\in L_x^S\cap R_x^U$, and so, from
  Proposition~\ref{prop-main-1.5}(c), $(sxx')\r{x}\in {}_{x}S$ and
  $((sxx')\r{x})\Psi= (x'sxx'x)\l{x}=(x'sx)\l{x}\in ({}_{x}S)\Psi$.  If
  $c\in\mathcal{C}$ is such that $(c)\l{x}$ is the representative of the left
  coset of $S_{x}\cap({}_{x}S)\Psi$ containing $(x'sx)\l{x}$, then
  $(x'sxg)\l{x}=(x'sx)\l{x}\cdot (g)\l{x}=(c)\l{x}$ for some $g\in
  \stab_S(L_x^U)$ such that $(g)\l{x}\in S_{x}\cap ({}_xS)\Psi$.  Thus, by
  Lemma~\ref{lem-free}, $xx'sxg=xc$. But $sx\in R_x^U$ implies that $xx'sx= sx$
  and so $sxg=xc$.  Since $xg \L^U x$, it follows from
  Corollary~\ref{cor-greens-simple}(a) that $xg\R^S x$ and so $sxg \R^S sx$.
  But $xt\R^S x$ and so $y = sxt \R^S sx \R^S sxg=xc$. 

  If $y\D^S x$ is arbitrary, then $R_{y}^U \sim R_{x}^U$, by
  Proposition~\ref{prop-D-membership}, and so there exists $i\in \{1,\ldots,
  m\}$ such that $R_{y}^U = u_i\cdot R_{x}^U$. By
  Proposition~\ref{prop-technical}(a), there exists $\ov{u}_i\in S^1$ such that
  $u_i\ov{u_i}y=y$ and $\ov{u_i}\cdot R_{y}^U = R_{x}^U$.  Again by
  Proposition~\ref{prop-D-membership}, $L_{x}^U \sim L_{y}^U$ and so
  there exists $v\in S^1$ such that $L_{y}^U\cdot v = L_{x}^U$. It
  follows that $y \D^S \ov{u}_iy v$ by Lemma~\ref{lem-greens-simple}(c).  From
  Lemma~\ref{lem-greens-simple}(a), since $L_{y}^U \sim L_{x}^U =
  L_{yv}^U$, it follows that $y\R^S yv$ and so $R_{yv}^U = R_{y}^U$. Thus
  $R_{\ov{u}_iyv}^U = \ov{u}_i \cdot R_{yv}^U = \ov{u}_i\cdot R_{y}^U =
  R_{x}^U$ and so $\ov{u}_iyv\R^U x$. Dually, from
  Lemma~\ref{lem-greens-simple}(b), $\ov{u}_iyv\L^U x$ and so $\ov{u}_iyv\in
  D_x^S \cap H_x^U$.  Hence there exists $c\in \mathcal{C}$ such that
  $\ov{u}_iyv\R^S xc$ and so $y\R^S yv= u_i\ov{u}_iyv \R^S u_ixc$, as required.

  Suppose there exist $i,j\in \{1,2,\ldots, m\}$ and $c,d\in \mathcal{C}$ such
  that $u_ixc\R^S u_jxd$. Then, since $xc\R^U x\R^U xd$ (from the first
  paragraph), it follows that $R_{xc}^U = R_{x}^U = R_{xd}^U$. Thus
  $$u_i\cdot R_{x}^U = u_i\cdot R_{xc}^U = R_{u_ixc}^U = R_{u_jxd}^U = 
    u_j\cdot R_{xd}^U = u_j \cdot R_{x}^U$$
  and, by the minimality of $\{u_1, \ldots, u_m\}$, it follows that $u_i =
  u_j$ and $i = j$. By the analogue of
  Proposition~\ref{prop-technical}(a), there exists $\ov{u_i}$ such that
  $\ov{u_i}u_ixc = xc$ and $\ov{u_i}u_ixd = xd$. Hence since $u_ixc\R^S u_jxd$
  and $\R^S$ is a left congruence, it follows that $xc\R^S xd$.  If $xc = xd$,
  then, by Lemma~\ref{lem-free}, $(c)\mu_x=(d)\mu_x$, and by the minimality of
  $\mathcal{C}$, $c = d$. Suppose that $xc \not = xd$.  Then there exists $y\in
  S$ such that $xcy = xd$.  We showed above that $xc\L^S x \L^S xd$, and so
  $x\L^S xd = xcy \L^S xy$, and, in particular, $y\in \stab_S(L_x^U)$.  From
  Lemma~\ref{lem-free} applied to $xcy = xd$, we deduce that
  $(c)\mu_x(y)\mu_x=(cy)\mu_x = (d)\mu_x$. This implies that
  $((c)\mu_x)^{-1}(d)\mu_x = (y)\mu_x \in S_{x}$. Therefore $(c)\mu_x$ and
  $(d)\mu_x$ are representatives of the same left coset of $S_x\cap
  ({}_xS)\Psi$ in $({}_xS)\Psi$, and again by the minimality of $\mathcal{C}$,
  $c = d$. 
\end{proof}

Next, we give an analogue of Proposition~\ref{prop-D-class-R-class-reps} for
$\L^S$- and $\H^S$-class representatives. 

\begin{prop}
  \label{prop-D-class-L-class-reps}
  Suppose that $x\in S$ and there is $x'\in U$ with $xx'x=x$ and that:
  \begin{enumerate}[\rm (a)]
    \item $\mathcal{C}$ is a minimal
      subset of $\stab_{S}(L_x^U)$ such that $\set{(c)\l{x}}{c\in \mathcal{C}}$
      is a right transversal of $S_{x}\cap({}_{x}S)\Psi$ in $S_x$ where
      $\Psi:{}_{x}S\to U_{x}$, defined by $((s)\r{x})\Psi= (x'sx)\l{x}$, is the
      embedding from Proposition~\ref{prop-main-2}(a);
    \item $\{v_1, \ldots, v_m\}$ is a minimal 
      subset of $S^1$ such that 
      $\set{R_{x}^U\cdot v_i}{1\leq i\leq m}$ equals the s.c.c.\ of 
      $L_{x}^U$ under the right action of $S$ on $\set{L_{x}^U}{x\in S}$.
  \end{enumerate}
  Then $\set{xcv_i}{c\in \mathcal{C},\ 1\leq i\leq m}$ is a minimal
  set of $\H^S$-class representatives of $R_x^S$, and hence a minimal
  set of $\L^S$-class representatives for $D_x^S$.
\end{prop}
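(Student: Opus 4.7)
The plan is to mirror the proof of Proposition~\ref{prop-D-class-R-class-reps}, swapping left and right throughout (in particular, replacing its left transversal of $S_x\cap({}_xS)\Psi$ in $({}_xS)\Psi$ by a right transversal of $S_x\cap({}_xS)\Psi$ in $S_x$). For containment: since $c\in\stab_S(L_x^U)$ we have $L_{xc}^U=L_x^U$, and hypothesis~(b) gives $L_{xcv_i}^U=L_x^U\cdot v_i\sim L_x^U=L_{xc}^U$; two applications of Lemma~\ref{lem-greens-simple}(a) then yield $xcv_i\R^S xc\R^S x$, so $xcv_i\in R_x^S$.

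For covering, let $y\in R_x^S$. Proposition~\ref{prop-main-0}(a) and hypothesis~(b) produce $i$ with $L_x^U\cdot v_i=L_y^U$, and Proposition~\ref{prop-technical}(a) furnishes $\ov{v_i}\in S^1$ with $y\ov{v_i}v_i=y$ and $y\ov{v_i}\in L_x^U$; Lemma~\ref{lem-greens-simple}(a) places $y\ov{v_i}\in R_x^S\cap L_x^U$. By Proposition~\ref{prop-main-1}(c), $(x'y\ov{v_i})\mu_x\in S_x$, so by~(a) there exist a unique $c\in\mathcal{C}$ and some $a\in\stab_S(R_x^U)$ with
$$(x'y\ov{v_i})\mu_x=((a)\nu_x)\Psi\cdot(c)\mu_x=(x'ax)\mu_x\cdot(c)\mu_x=(x'axc)\mu_x.$$
Applying Lemma~\ref{lem-free} with witness $z=x$, and using that both $y\ov{v_i}$ and $axc$ lie in $R_x^U$ (for $axc$: $xc\in R_x^U$ and $\R^U$ is a left congruence, so $axc\R^U ax\R^U x$) and are therefore fixed by the left identity $xx'$ of $R_x^U$, I obtain $y\ov{v_i}=axc$, hence $y=axcv_i\in S^1\cdot xcv_i$. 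Choosing $b\in\stab_S(R_x^U)$ with $(b)\nu_x=((a)\nu_x)^{-1}$ in the group ${}_xS$, the $\nu_x$-analogue of Lemma~\ref{lem-free} forces $ba$ to act as the identity on $R_x^U$; in particular $baxcv_i=xcv_i$, so $by=xcv_i\in S^1 y$. This gives $y\L^S xcv_i$, and combined with $y\R^S xcv_i$ (both in $R_x^S$) yields $y\H^S xcv_i$.

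For minimality, suppose $xcv_i\H^S xdv_j$. Equating $\L^U$-classes forces $L_x^U\cdot v_i=L_x^U\cdot v_j$, whence $v_i=v_j$ by minimality of $\{v_1,\ldots,v_m\}$. Since $\L^S$ is a right congruence, right-multiplying by $\ov{v_i}$ (and using $xc,xd\in L_x^U$) yields $xc\L^S xd$. If $xc=xd$, Lemma~\ref{lem-free} gives $(c)\mu_x=(d)\mu_x$, so $c=d$ by minimality of $\mathcal{C}$. Otherwise, choose $\alpha\in S$ with $xc=\alpha xd$; since $xd\R^U x$ and $\R^U$ is a left congruence, $\alpha xd\R^U\alpha x$, and $\alpha xd=xc\in R_x^U$ then forces $\alpha\in\stab_S(R_x^U)$. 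Left-multiplying $xc=\alpha xd$ by $x'$, applying $\mu_x$, and using that $\mu_x$ is a homomorphism together with $(x'xc)\mu_x=(c)\mu_x$ (via Lemma~\ref{lem-free}), I obtain
$$(c)\mu_x=(x'\alpha x)\mu_x\cdot(d)\mu_x=\Psi((\alpha)\nu_x)\cdot(d)\mu_x,$$
so $(c)\mu_x$ and $(d)\mu_x$ lie in the same right coset of $S_x\cap({}_xS)\Psi$ in $S_x$, and $c=d$ by minimality.

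The most delicate step is the covering argument's production of $b\in\stab_S(R_x^U)$ with $by=xcv_i$: this requires inverting $(a)\nu_x$ inside the group ${}_xS$ and translating the result through the $\nu_x$-analogue of Lemma~\ref{lem-free}. This is exactly where the switch from a left transversal in $({}_xS)\Psi$ (used in Proposition~\ref{prop-D-class-R-class-reps}) to a right transversal in $S_x$ becomes indispensable, and is the subtlety that prevents the present proposition from being a purely mechanical dual.
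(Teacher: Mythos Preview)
Your proof is correct, and it takes a somewhat different route from the paper's.

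The paper's proof mirrors Proposition~\ref{prop-D-class-R-class-reps} closely: it first reduces to the case $y\in D_x^S\cap H_x^U$, invokes the decomposition $y=sxt$ from Proposition~\ref{prop-D-class-elements-1}, and works with $(x'xt)\mu_x\in S_x$ to locate the right-coset representative $(c)\mu_x$; it then shows $gxt=xc$ for a suitable $g\in\stab_S(R_x^U)$ and deduces $y\L^S xc$. The passage from $D_x^S\cap H_x^U$ to arbitrary $y\in D_x^S$, and the minimality argument, are left as ``analogous'' to Proposition~\ref{prop-D-class-R-class-reps}.

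You instead stay entirely inside $R_x^S$: for $y\in R_x^S$ you pull back along $\ov{v_i}$ to land in $L_x^U\cap R_x^S$, apply Proposition~\ref{prop-main-1}(c) to get $(x'y\ov{v_i})\mu_x\in S_x$, decompose via the right transversal as $(x'ax)\mu_x\cdot(c)\mu_x$, and then explicitly produce $a,b\in\stab_S(R_x^U)$ witnessing $y=axcv_i$ and $by=xcv_i$, hence $y\L^S xcv_i$. Combined with $y\R^S xcv_i$ this gives $y\H^S xcv_i$ directly. Your minimality argument is also fully written out rather than deferred.

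The trade-off: the paper's approach reuses Proposition~\ref{prop-D-class-elements-1} and the machinery already established in Proposition~\ref{prop-D-class-R-class-reps}, keeping the two propositions visibly parallel. Your approach is more self-contained for this particular statement---it never needs the $D_x^S\cap H_x^U$ decomposition---and makes the role of the group inversion in $_xS$ (your element $b$) explicit, which is indeed where the right-versus-left transversal distinction bites.
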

\begin{proof}
  It follows from part (b) and Lemma~\ref{lem-greens-simple}(a) that $xcv_i \R^S
  xc \R^S x$, and so it suffices to show that $\set{xcv_i}{c\in \mathcal{C},\
  1\leq i\leq m}$ is a set of $\L^S$-class representatives for $D_x^S$.
  The proof is somewhat similar to that of
  Proposition~\ref{prop-D-class-R-class-reps}, and so we will omit some details. 

  If $y\in D_x^S\cap H_x^U$, then we will show that there is $c\in \mathcal{C}$
  such that $y\L^S xc$. By Proposition~\ref{prop-D-class-elements-1}, there exist
  $s\in \stab_S(R_x^U)$ and $t\in \stab_S(L_x^U)$ such that $y = sxt$. As in the
  proof of Proposition~\ref{prop-D-class-R-class-reps}, it follows that $sx\L^Sx$
  and $xt\R^Sx$. Thus $xt\in L_x^U\cap R_x^S$ and so $(x'xt)\l{x}\in S_x$. If
  $c\in \mathcal{C}$ is such that $(c)\l{x}$ is the representative of the right
  coset containing $(x'xt)\l{x}$, then there exists $g\in \stab_S(R_x^U)$ such
  that $(x'gx)\l{x}\in S_x\cap ({}_xS)\Psi$ and $(x'gxt)\l{x} = (x'gxx'xt)\l{x}=
  (c)\l{x}$. Hence, by Lemma~\ref{lem-free}, $xx'gxt=xc$. But $xt\R^Sx$ and
  $g\in \stab_S(R_x^U)$ and so $R_{gxt}^U = g \cdot R_{xt}^U= g\cdot R_{x}^U =
  R_{x}^U$, which implies that $gxt\R^U x$. Since $xx'$ is a left identity for
  $R_x^U$, $xx'gxt=gxt$, and so $gxt=xc$. Since $gx\R^U x$, from
  Corollary~\ref{cor-greens-simple}(b), $gx\L^S x$ and so $xc=gxt\L^Sxt$.
  Therefore $y=sxt\L^Sxt\L^Sxc$, as required. 

  The proof that an arbitrary $y\in D_x^S$ is $\L^S$-related to $xcv_i$ for
  some $i$ and that $(xcv_i,xdv_j)\not\in\L^S$ if $i\not=j$ or $c\not =d$, is
  directly analogous to the final part of the proof of
  Proposition~\ref{prop-D-class-R-class-reps}, and so we omit it. 
\end{proof}


\section{Specific classes of semigroups}
\label{section-special-cases}

In this section, we show how the results in Section~\ref{section-paradigm} can
be efficiently applied to transformation, partial permutation, matrix, and
partition semigroups; and also to subsemigroups of finite regular Rees
$0$-matrix semigroups.  More precisely, suppose that  $U$ is any of the full
transformation monoid, the symmetric inverse monoid, the general linear monoid
over any finite field, the partition monoid, or a finite regular Rees $0$-matrix
semigroup (the definitions of these semigroups can be found below) and that $S$
is any subsemigroup of $U$. 

In general, computing with the action of $S$ on $U/\L^U$ defined in
\eqref{equation-action-on-L} directly has the disadvantage that the points
acted on are $\L^U$-classes, which are relatively complex to represent on a
computer, and for the purposes of checking equality, and hence for determining
the value of a point under the action.  
Instead, we represent each $\L^U$-class by a point in some convenient set
$\Omega$ and define an isomorphic action on $\Omega$. 

Let $\Omega$ be any set such that $|\Omega| = |U/\L^U|$ and let $\lambda: U\to
\Omega$ be a surjective function such that $\ker(\lambda)=
\L^U$. We define a right action of $S$ on $\Omega$ by 
$$(x)\lambda \cdot s = (xs)\lambda$$ 
for all $x\in U$ and $s\in S ^ 1$.
This action is well-defined since $\L^U$ is a right congruence
on $S$.  It follows immediately from the definition that $\lambda$ induces an
isomorphism of the action of $S$ on $U/\L^U$ (defined in
\eqref{equation-action-on-L}) and the action of $S$ on $\Omega$. 

Clearly, any statement about the action of $S$ defined in
\eqref{equation-action-on-L} can be replaced with an equivalent statement about
the action of $S$ on $\Omega$. Furthermore, any statement about the action of
$S$ defined in \eqref{equation-action-on-U} within a strongly connected
component can be replaced with an equivalent statement about the action of $S$
within a strongly connected component of $\Omega$. 

As an \textit{aide-m\'emoire}, we will write
$(U)\lambda$ or $(S)\lambda$ instead of $\Omega$.  We also write $\rho$ to
denote the analogue of $\lambda$ for left actions. More precisely, we suppose
that $S$ has a left action on a set $(U)\rho$, the kernel of this action is
$\R^U$, and there is a homomorphism $\rho: U \to (U)\rho$ of the actions of $S$
on $U$ by left multiplication and of $S$ on $(U)\rho$.  

In this section, we define the functions $\lambda: U\to (U)\lambda$ and $\rho:
U \to (U)\rho$ where $U$ is any of the monoids mentioned above. The functions
$\lambda$ and $\rho$ are defined so that if $S$ is any subsemigroup of $U$,
then we can compute with the action of $S$ on $(S)\lambda$ and $(S)\rho$
efficiently. For such subsemigroups $S$, we also show how to obtain faithful
representations of relatively small degrees of the stabilisers of $\L^U$- and
$\R^U$-classes under the action of $S$.


\subsection{Transformation semigroups}\label{subsection-trans}

Let $n\in\N$ and write $\bn=\{1,\ldots,n\}$.  As already stated, a
\textit{transformation} of $\bn$ is a function from $\bn$ to itself, and the
\textit{full transformation monoid of degree $n$}, denoted $T_n$, is the monoid
of all transformations on $\bn$ under composition.  We refer to subsemigroups
of $T_n$ as \textit{transformation semigroups of degree $n$}.  It is well-known
that the full transformation monoid is regular; see \cite[Exercise
2.6.15]{Howie1995aa}.  Hence, it is possible to apply the results from
Section~\ref{section-paradigm} to any transformation semigroup~$S$. 

Let $f\in T_n$ be arbitrary. Then the \textit{image} of $f$ is defined to be
$$\im(f)=\set{(i)f}{i\in\bn}\subseteq\bn$$
and the \textit{kernel} of $f$ is defined by
$$\ker(f)=\set{(i,j)}{(i)f=(j)f}\subseteq \bn\times \bn.$$
The kernel of a transformation is an equivalence relation, and every
equivalence relation on $\bn$ is the kernel of some transformation on $\bn$. We
will denote by $\K$ the set of all equivalence relations on $\bn$.  The
\textit{kernel classes} of a transformation $f\in T_n$, are just the
equivalence classes of the equivalence relation $\ker(f)$. 

The following well-known result characterises the Green's relations on the full
transformation monoid. 


\begin{prop}[Exercise 2.6.16 in \cite{Howie1995aa}.]
  \label{prop-greens-full-trans}
  Let $n\in \N$ and let $f,g\in T_n$.
  Then the following hold:
  \begin{enumerate}[\rm (a)]
    \item $f\L^{T_n}g$ if and only if $\im(f)=\im(g)$;
    \item $f\R^{T_n}g$ if and only if $\ker(f)=\ker(g)$;
    \item $f\D^{T_n}g$ if and only if $|\im(f)|=|\im(g)|$.
  \end{enumerate}
\end{prop}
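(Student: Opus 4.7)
The plan is to prove each part separately, with (c) reducing to (a) and (b). Since $T_n$ is finite, we have the standard equivalence $\D = \L\circ \R$, so once (a) and (b) are established, proving (c) amounts to constructing a transformation with a prescribed image and a prescribed kernel.

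For part (a), I would first observe that $f\L^{T_n} g$ is equivalent to the existence of $a,b\in T_n^1$ with $ag=f$ and $bf=g$. The forward direction is then immediate: any value $(i)f=((i)a)g$ lies in $\im(g)$, so $\im(f)\subseteq \im(g)$, and by symmetry equality holds. For the converse, assume $\im(f)=\im(g)$ and construct $a\in T_n$ as follows: for each $i\in\bn$, the element $(i)f$ lies in $\im(g)$, so there is some $j_i\in\bn$ with $(j_i)g=(i)f$; define $(i)a=j_i$. Then $(i)(ag)=(j_i)g=(i)f$, so $ag=f$. Symmetrically one constructs $b$ with $bf=g$. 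The identity case $f=g$ is trivial via $a=b=1$.

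Part (b) is dual, but with a twist because right multiplication is the side on which transformations ``act''. The forward direction uses that $fa=g$ forces $\ker(f)\subseteq\ker(fa)=\ker(g)$, and symmetry gives equality. For the converse, assume $\ker(f)=\ker(g)$. For each $i\in\im(f)$ choose $k_i\in\bn$ with $(k_i)f=i$, and set $(i)a=(k_i)g$; because $\ker(f)=\ker(g)$, the value $(k_i)g$ depends only on $i$, not on the choice of $k_i$. Extend $a$ arbitrarily to $\bn\setminus\im(f)$. Then for every $j\in\bn$, $(j)(fa)=((j)f)a=(j)g$, so $fa=g$, and $b$ is constructed symmetrically.

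For part (c), the forward direction follows from (a), (b), and the identity $|\im(f)|=|\bn/\ker(f)|$: any $\L$-step preserves the image (hence its size) and any $\R$-step preserves the kernel (hence the number of classes, which equals the rank). Since $\D$ in a finite semigroup is the transitive closure of $\L\cup\R$, $|\im|$ is a $\D$-invariant. For the converse, assume $|\im(f)|=|\im(g)|=r$. Then $\bn/\ker(f)$ and $\im(g)$ have the same cardinality $r$, so I pick any bijection $\sigma:\bn/\ker(f)\to\im(g)$ and define $h\in T_n$ by $(i)h=([i]_{\ker(f)})\sigma$. By construction $\ker(h)=\ker(f)$ and $\im(h)=\im(g)$, so by (b) $h\R^{T_n}f$ and by (a) $h\L^{T_n}g$, yielding $f\D^{T_n}g$.

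There is no real obstacle here; the only care needed is in part (b), where one must check that the value $(k_i)g$ is independent of the chosen $\ker(f)$-representative $k_i$, and in part (c), where one must verify that the auxiliary transformation $h$ genuinely realises both the prescribed kernel and the prescribed image simultaneously, which is guaranteed precisely because $|\bn/\ker(f)|=|\im(g)|$.
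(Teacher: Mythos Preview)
Your proof is correct and is the standard argument for this classical result. Note, however, that the paper does not actually prove this proposition: it is merely stated, with a citation to Exercise~2.6.16 in Howie's textbook, and used as a known fact. So there is no ``paper's own proof'' to compare against; your argument is exactly the kind of elementary verification one would give when asked to supply the details.
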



\begin{prop}
  \label{prop-transformation}
  Let $S$ be an arbitrary transformation semigroup of degree $n\in\N$. Then:
  \begin{enumerate}[\rm (a)]
    \item $\lambda: T_n \to \P(\bn)$ defined by $(x)\lambda = \im(x)$
      is a homomorphism of the actions of $S$ on $T_n$ by right multiplication
      and the natural action on $\P(\bn)$, and $\ker(\lambda) = \L^{T_n}$;
    \item if $L$ is any $\L$-class of $T_n$, then $S_{L}$ acts faithfully on 
      $\im(x)$ for each $x\in L$;
    \item $\rho: T_n \to \K$ defined by $(x)\rho = \ker(x)$ 
      is a homomorphism of the actions of $S$ on $T_n$ by left multiplication
      and the left action of $S$ on $\K$ defined by 
      $$x\cdot K=\ker(xy) \qquad\text{where }y\in T_n,\ \ker(y)=K,$$
      and $\ker(\rho)=\R^U$;
    \item if $R$ is any $\R$-class of $T_n$, then ${}_RS$ acts faithfully on the
      set of kernel classes of $\ker(x)$ for each $x\in R$.
  \end{enumerate}
\end{prop}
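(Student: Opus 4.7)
The four parts split naturally into two dual pairs: (a) and (b) concern the right action and the image map $\lambda$, while (c) and (d) concern the left action and the kernel map $\rho$. The plan is to verify (a) and (b) in detail, and then indicate how exactly the same arguments dualise to (c) and (d); throughout, I would use Proposition~\ref{prop-greens-full-trans} to identify Green's relations on $T_n$ in terms of images and kernels.

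For (a), the homomorphism property $(xs)\lambda = (x)\lambda\cdot s$ unpacks to $\im(xs) = \im(x)\cdot s$, which is just the chain $\im(xs) = \{((i)x)s : i\in \bn\} = (\im(x))s$; the kernel claim is then immediate from Proposition~\ref{prop-greens-full-trans}(a). For (b), fix $x\in L$; every $y\in L$ has $\im(y) = \im(x)$, so any $s\in S^1$ with $L\cdot s = L$ must satisfy $\im(x)s = \im(ys) = \im(x)$, giving the natural embedding of $S_L$ into $\sym(\im(x))$. Faithfulness then reduces to the observation that if $s, t\in \stab_S(L)$ agree as functions on $\im(x)$, then for every $y\in L$ and every $i\in \bn$ we have $(i)y\in \im(y)=\im(x)$, so $(i)(ys) = ((i)y)s = ((i)y)t = (i)(yt)$, hence $s|_L = t|_L$.

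For (c), there is one extra step, namely to verify that the formula $s\cdot K = \ker(sy)$ defines $s\cdot K$ independently of the choice of $y$ with $\ker(y) = K$; this follows from the identity $\ker(sy) = \{(i,j)\in \bn\times\bn : ((i)s,(j)s)\in \ker(y)\}$. The action axioms and the homomorphism property $(sx)\rho = s\cdot (x)\rho$ then follow at once, and Proposition~\ref{prop-greens-full-trans}(b) yields $\ker(\rho) = \R^{T_n}$. For (d), the key point is that $s\in \stab_S(R)$ precisely means $\ker(sx) = \ker(x)$, which is the condition that $s$ sends each kernel class of $x$ into a single kernel class and, moreover, acts injectively on the set of kernel classes; hence $s$ induces a permutation of those classes, and this defines the action of ${}_RS$. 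For faithfulness, if $s,t\in \stab_S(R)$ induce the same permutation, then for each $i\in \bn$ the points $(i)s$ and $(i)t$ lie in the same kernel class of $x$, hence of any $y\in R$ (all of which share the same kernel), so $((i)s)y=((i)t)y$ and thus $sy=ty$ for every $y\in R$.

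The routine unpackings in (a) and (c) should be painless. The only slightly delicate point is (d), where one has to extract both halves of the statement ``$s$ induces a permutation on kernel classes'' from the single equality $\ker(sx) = \ker(x)$, and then use the constancy of $\ker(y)$ across $y\in R$ to upgrade agreement of induced permutations to agreement of ${}_R|s$ and ${}_R|t$ on the whole of $R$. I expect this to be the main (and only real) obstacle in the proof.
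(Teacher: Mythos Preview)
Your proposal is correct and follows essentially the same route as the paper: the paper also proves only (a) and (b) and declares (c), (d) analogous, establishes (a) by the one-line computation $\im(xs)=\im(x)\cdot s$ together with Proposition~\ref{prop-greens-full-trans}(a), and for (b) defines the map $S_L\to\sym(\im(x))$ via $s|_L\mapsto s|_{\im(x)}$ and checks faithfulness. The only cosmetic difference is that where you unwind faithfulness by hand (``$(i)y\in\im(x)$, so $(i)ys=(i)yt$''), the paper packages that step by citing Lemma~\ref{lem-free}; your explicit treatment of well-definedness of the left action in (c) is a detail the paper leaves implicit.
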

\begin{proof} 
  We will prove only parts (a) and (b); the proofs of parts (c) and (d)
  are analogous. \smallskip
  
  \noindent\textbf{(a).}
  It follows from Proposition~\ref{prop-greens-full-trans}
  that $\ker(\lambda) = \L^{T_n}$. 
  If $x\in T_n$ and $s\in S$ are arbitrary, then 
  $(xs)\lambda = \im(xs) = \im(x)\cdot s = (x)\lambda\cdot s$
  and so $\lambda$ is a homomorphism of the actions in part (a).
  \smallskip

  \noindent\textbf{(b).}
  Let $x\in L$ and let $\zeta:S_L\to S_{\im(x)}$ be defined by
  $(s|_L)\zeta=s|_{\im(x)}$ where
  the action of $s|_{\im(x)}$ on $\im(x)$ (on the right) is defined by:
  $i\cdot (s|_{\im(x)})=(i)s,$
  for all $i\in \im(x)$.
  Let $s\in \stab_S(L)$ be arbitrary. Then $xs\in L$ and so
  $\im(xs)=\im(x)$, and, in particular, $s$ acts on $\im(x)$. Thus $\zeta$ is
  well-defined. It is routine to verify that $\zeta$ is a homomorphism.
  From the definition of $\zeta$, $s, t\in S$ have the same action on $\im(x)$ if
  and only if $xs=xt$. But, by Lemma~\ref{lem-free}, $xs = xt$ if and only if
  $s|_L=t|_L$ and the action of $S_{L}$ on $\im(x)$ is faithful.
\end{proof}\smallskip


\subsection{Partial permutation semigroups and inverse semigroups}
\label{subsection-pperm}

A \textit{partial permutation} on $\bn=\{1,\ldots, n\}$ is an injective function
from a subset of $\bn$ to another subset of equal cardinality.  The
\textit{symmetric inverse monoid of degree $n$}, denoted $I_n$, is the monoid of
all partial permutations on $\bn$ under composition (as binary relations).  We
refer to subsemigroups of $I_n$ as \textit{partial permutation semigroups of
degree $n$}.  A semigroup $U$ is called \textit{inverse} if for every $x\in U$
there exists a unique $y\in U$ such that $xyx=x$ and $yxy=y$.  Every inverse
semigroup is isomorphic to an inverse subsemigroup of a symmetric inverse monoid
by the Vagner-Preston Theorem; see \cite[Theorem 5.1.7]{Howie1995aa}.  Since
every inverse semigroup is regular, we may apply the results from
Section~\ref{section-paradigm} to arbitrary subsemigroups of the
symmetric inverse monoid. We will give an analogue of
Proposition~\ref{prop-transformation} for subsemigroups of any symmetric inverse
monoid over a finite set, for which we require a description of the Green's
relations in $I_n$.  

Let $f\in I_n$ be arbitrary. Then the \textit{domain} of $f$ is defined to be
$$\dom(f)=\set{i\in \bn}{(i)f\ \text{is defined}}\subseteq \bn$$
and the \textit{image} of $f$ is
$$\im(f)=\set{(i)f}{i\in \dom(f)}\subseteq \bn.$$
The \textit{inverse} of $f$ is the unique partial permutation $f^{-1}$ with the
property that $ff^{-1}f=f$ and $f^{-1}ff^{-1}=f^{-1}$; note that $f^{-1}$
coincides with the usual inverse mapping $\im(f)\to\dom(f)$.


\begin{prop}[Exercise 5.11.2 in \cite{Howie1995aa}.]
  \label{prop-greens-symmetric-inverse}
  Let $n\in \N$ and let $f,g\in I_n$ be arbitrary. Then the following hold:
  \begin{enumerate}[\rm (a)]
    \item $f\L^{I_n}g$ if and only if $\im(f)=\im(g)$;
    \item $f\R^{I_n}g$ if and only if $\dom(f)=\dom(g)$;
    \item $f\D^{I_n}g$ if and only if $|\im(f)|=|\im(g)|$.
  \end{enumerate}
\end{prop}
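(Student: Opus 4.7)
The strategy is to verify each part directly from the definition of the corresponding Green's relation, exploiting the fact that every element of $I_n$ has a genuine (two-sided) inverse. A useful preliminary observation is that for any $A\subseteq \bn$ the partial identity $\id_A$ is an idempotent of $I_n$, and for any $f\in I_n$ we have $\id_{\dom(f)}f=f=f\id_{\im(f)}$, while $ff^{-1}=\id_{\dom(f)}$ and $f^{-1}f=\id_{\im(f)}$. These identities will do all of the heavy lifting.

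For part (a), I would prove both implications. For the forward direction, if $f\L^{I_n}g$ then $f=ag$ and $g=bf$ for some $a,b\in I_n^1$, which immediately forces $\im(f)\subseteq\im(g)\subseteq\im(f)$. For the converse, given $\im(f)=\im(g)$, I would take $a:=fg^{-1}\in I_n$ and $b:=gf^{-1}\in I_n$ and verify $ag=fg^{-1}g=f\id_{\im(g)}=f\id_{\im(f)}=f$ (and symmetrically $bf=g$), so that $I_n^1 f = I_n^1 g$. Part (b) is entirely dual: the forward direction is the same argument applied on the right, and for the converse, given $\dom(f)=\dom(g)$, the elements $f^{-1}g$ and $g^{-1}f$ serve as the required left multipliers.

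For part (c), I would use that in any semigroup $\D=\L\circ\R$. The forward direction is easy: if $f\D^{I_n}g$, pick $h\in I_n$ with $f\L^{I_n}h\R^{I_n}g$; then by (a) and (b), $\im(f)=\im(h)$ and $\dom(h)=\dom(g)$, and since any partial permutation satisfies $|\dom(h)|=|\im(h)|$, we obtain $|\im(f)|=|\im(h)|=|\dom(g)|=|\im(g)|$. For the converse, assuming $|\im(f)|=|\im(g)|$, I would choose any bijection $h:\dom(g)\to\im(f)$ as an element of $I_n$; then $\im(h)=\im(f)$ and $\dom(h)=\dom(g)$, so by (a) and (b), $f\L^{I_n}h\R^{I_n}g$, giving $f\D^{I_n}g$.

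The only real obstacle is being careful with the bookkeeping of domains and images in part (a) and (b), in particular verifying that the candidate witnesses $fg^{-1}$, $gf^{-1}$, $f^{-1}g$, $g^{-1}f$ actually reduce to $f$ and $g$ when composed appropriately; this amounts to noting that the partial identity on $\im(f)=\im(g)$ (respectively $\dom(f)=\dom(g)$) acts trivially where it needs to. No deeper structural fact about $I_n$ is required beyond the existence of inverses and the behaviour of the partial identity idempotents.
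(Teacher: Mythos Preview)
Your proof is correct. The paper does not actually prove this proposition; it is stated without proof and attributed to Exercise~5.11.2 in Howie's textbook, so there is no argument to compare against. Your direct verification using inverses and partial identities is the standard approach and is exactly what one would expect for this exercise.
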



\begin{prop}
  \label{prop-partial-perms}
  Let $S$ be an arbitrary partial permutation semigroup of degree $n \in \N$. Then:
  \begin{enumerate}[\rm (a)]
    \item $\lambda: I_n \to \P(\bn)$ defined by $(x)\lambda = \im(x)$
      is a homomorphism of the actions of $S$ on $I_n$ by right multiplication,
      and the right action on $\P(\bn)$ defined by 
      $$A\cdot x=\set{(a)x}{a\in A\cap \dom(x)} \qquad\text{for
      }A\in\P(\bn)\text{ and }x\in I_n$$
      and $\ker(\lambda)=\L^{I_n}$;
    \item if $L$ is any $\L$-class of $I_n$, then $(I_n)_{L}$ acts faithfully on
      the right of $\im(x)$ for each $x\in L$;
    \item $\rho: I_n \to \P(\bn)$ defined by $(x)\rho = \dom(x)$
      is a homomorphism of the actions of $S$ on $I_n$ by left multiplication,
      and the left action on $\P(\bn)$ defined by 
      $$x\cdot A=\set{(a)x^{-1}}{a\in A\cap \im(x)} \qquad\text{for
      }A\in\P(\bn)\text{ and }x\in I_n;$$
      and $\ker(\rho)=\R^{I_n}$;
    \item if $R$ is any $\R$-class of $I_n$, then ${}_RS$ acts faithfully on
      the left of $\dom(x)$ for each $x\in R$.
  \end{enumerate}
\end{prop}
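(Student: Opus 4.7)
The plan is to mirror the proof of Proposition~\ref{prop-transformation}, proving (a) and (b) directly and then deducing (c) and (d) by the duality given by inversion $x\mapsto x^{-1}$ in $I_n$, which interchanges $\dom$ and $\im$ and swaps left with right multiplication. Throughout, the existing description of Green's relations on $I_n$ (Proposition~\ref{prop-greens-symmetric-inverse}) does most of the work.

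For part (a), the statement $\ker(\lambda)=\L^{I_n}$ is immediate from Proposition~\ref{prop-greens-symmetric-inverse}(a). To verify the homomorphism property of $\lambda$, I would unpack composition of partial permutations: an element of $\im(xs)$ is exactly the image under $s$ of some element of $\im(x)$ that lies in $\dom(s)$, giving $\im(xs)=\set{(a)s}{a\in\im(x)\cap\dom(s)}$. This single identity simultaneously shows that the formula in (a) does define a right action of $S$ on $\P(\bn)$ and that $(xs)\lambda=(x)\lambda\cdot s$.

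For part (b), fix $x\in L$ and define $\zeta\colon (I_n)_L\to\sym(\im(x))$ by $(s|_L)\zeta=s|_{\im(x)}$. The main obstacle, and the only real departure from the transformation case, is to justify that $s|_{\im(x)}$ is well-defined as a permutation of $\im(x)$: I would argue that $s\in\stab_{I_n}(L)$ forces $\im(x)\cdot s=\im(x)$, whence injectivity of $s$ gives $|\im(x)\cap\dom(s)|=|\im(x)\cdot s|=|\im(x)|$ and therefore $\im(x)\subseteq\dom(s)$, so $s$ really does restrict to a bijection on $\im(x)$. The homomorphism property of $\zeta$ is then routine. For faithfulness, if $s,t\in\stab_{I_n}(L)$ agree on $\im(x)$, then for every $y\in L$ we have $\im(y)=\im(x)$, so both $ys$ and $yt$ are defined on $\dom(y)$ and agree pointwise there; hence $ys=yt$ for all $y\in L$ and $s|_L=t|_L$ follows from Lemma~\ref{lem-free}.

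Parts (c) and (d) follow by dualising: using $\dom(x)=\im(x^{-1})$, the left action in (c) coincides with the right action of $s^{-1}$ applied to the image description in (a), and the stabiliser ${}_R S$ corresponds to $(I_n)_{L}$ after inversion. With this correspondence in place, the same arguments transfer verbatim and no new difficulties arise.
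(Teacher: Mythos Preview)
Your proposal is correct and follows exactly the approach the paper intends: the paper's own proof simply reads ``The proof of this proposition is very similar to that of Proposition~\ref{prop-transformation} and is omitted.'' Your write-up in fact supplies more detail than the paper does, in particular the observation that $s\in\stab_{I_n}(L)$ forces $\im(x)\subseteq\dom(s)$, which is the one genuinely new wrinkle compared with the transformation case and which you handle correctly.
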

\begin{proof} 
  The proof of this proposition is very similar to that of
  Proposition~\ref{prop-transformation} and is omitted.
\end{proof}


\subsection{Matrix semigroups}
Let $R$ be a finite field, let $n\in\N$, and let $M_n(R)$ denote the
monoid of $n\times n$ matrices with entries in $R$ (under the usual matrix
multiplication). The monoid $M_n(R)$ is called a \textit{general linear monoid}.
In this paper, a \textit{matrix semigroup} is a subsemigroup of some general
linear monoid. It is well-known that $M_n(R)$ is a regular semigroup
\cite[Lemma 2.1]{Okninski1998aa}. 

If $\alpha\in M_n(R)$ is arbitrary, then denote by $r(\alpha)$ the \textit{row
space} of $\alpha$ (i.e.~the subspace of the $n$-dimensional vector space over
$R$ spanned by the rows of $\alpha$).  We denote the \textit{dimension} of
$r(\alpha)$ by $\dim(r(\alpha))$.  The notion of a \textit{column space} and its
dimension are defined dually.  We denote the column space of $\alpha\in M_n(R)$
by $c(\alpha)$. 



\begin{prop}[Lemma 2.1 in \cite{Okninski1998aa}]
  \label{prop-greens-general-linear}
  Let $R$ be a finite field, let $n\in \N$, and let
  $\alpha,\beta\in M_n(R)$ be arbitrary. Then the following hold:
  \begin{enumerate}[\rm (a)]
    \item $\alpha\L^{M_n(R)}\beta$ if and only if
      $r(\alpha)=r(\beta)$;
    \item $\alpha\R^{M_n(R)}\beta$ if and only if
      $c(\alpha)=c(\beta)$;
    \item $\alpha\D^{M_n(R)}\beta$ if and only if
      $\dim(r(\alpha))=\dim(r(\beta))$.
  \end{enumerate}
\end{prop}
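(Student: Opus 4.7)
The plan is to establish each of the three equivalences in turn, relying on two elementary facts from linear algebra: left multiplication transforms rows into linear combinations of the rows of the right factor (so $r(\gamma\alpha) \subseteq r(\alpha)$ always), and dually right multiplication acts on columns (so $c(\alpha\gamma) \subseteq c(\alpha)$). Part (c) will be reduced to parts (a) and (b) by exploiting the fact that, in any semigroup, $\D = \L \circ \R$.

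For part (a), the forward direction is immediate from the fact above: if $\gamma \alpha = \beta$ and $\delta \beta = \alpha$ for $\gamma, \delta \in M_n(R)^1$, then $r(\beta) \subseteq r(\alpha)$ and $r(\alpha) \subseteq r(\beta)$. For the converse, given $r(\alpha) = r(\beta)$, I will build $\gamma$ row by row, letting the $i$-th row of $\gamma$ be any vector of coefficients expressing the $i$-th row of $\beta$ as a linear combination of the rows of $\alpha$; such coefficients exist because $r(\beta) \subseteq r(\alpha)$. Then $\gamma \alpha = \beta$, and $\delta$ is produced symmetrically, giving $\alpha \L^{M_n(R)} \beta$. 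Part (b) is entirely dual, using columns instead of rows and right multiplication instead of left.

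For part (c), the forward direction is quick: if $\alpha \D \beta$ then, since $M_n(R)$ is finite, there is $\gamma$ with $\alpha \L \gamma \R \beta$, so by (a) and (b), $r(\alpha) = r(\gamma)$ and $c(\gamma) = c(\beta)$; combining this with the standard identity $\dim r(\mu) = \dim c(\mu) = \rank(\mu)$ for every $\mu \in M_n(R)$ gives $\dim r(\alpha) = \dim r(\beta)$. The converse requires constructing an intermediate $\gamma$ when $\dim r(\alpha) = \dim r(\beta) = k$: pick a basis $v_1, \ldots, v_k$ of $r(\alpha)$ (as row vectors) and a basis $w_1, \ldots, w_k$ of $c(\beta)$ (as column vectors), and set
\[
  \gamma = \sum_{i=1}^k w_i v_i.
\]
I will then verify $r(\gamma) = r(\alpha)$ and $c(\gamma) = c(\beta)$: every row of $\gamma$ lies in $\mathrm{span}(v_1, \ldots, v_k) = r(\alpha)$, and because $w_1, \ldots, w_k$ are linearly independent the map reading off the row coefficients is surjective onto $r(\alpha)$; column spaces are handled symmetrically. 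Parts (a) and (b) then give $\alpha \L \gamma \R \beta$, hence $\alpha \D \beta$.

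The only step that is not a one-line reduction is the explicit outer-product construction in the converse of (c); the main obstacle is checking that it produces a matrix whose row space is exactly $r(\alpha)$ (not merely contained in it) and likewise for the column space, which is where the linear independence of both chosen bases becomes essential.
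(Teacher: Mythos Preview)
Your proof is correct. Note, however, that the paper does not supply its own proof of this proposition: it is quoted as Lemma~2.1 of \cite{Okninski1998aa} and used as a black box, so there is no ``paper's proof'' to compare against.

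A couple of minor remarks on your write-up. In the forward direction of (c) you invoke finiteness of $M_n(R)$ to get $\gamma$ with $\alpha\L\gamma\R\beta$; this is harmless but unnecessary, since $\D=\L\circ\R$ holds in every semigroup. In the converse of (c), your outer-product construction $\gamma=\sum_{i=1}^k w_i v_i$ works exactly as you claim: the $j$-th row of $\gamma$ is $\sum_i (w_i)_j\,v_i$, so $r(\gamma)\subseteq r(\alpha)$, and because the $n\times k$ matrix $[w_1\mid\cdots\mid w_k]$ has rank $k$, its rows span $R^k$, which forces every $v_i$ (and hence all of $r(\alpha)$) into $r(\gamma)$. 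The column-space argument is symmetric. This is a clean and standard route to the result.
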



\begin{prop}
  \label{prop-matrix-semigroups}
  Let $R$ be a finite field, let $n\in \N$, and let $S$ be
  an arbitrary subsemigroup of the general linear monoid $M_n(R)$. 
  Then the following hold:
  \begin{enumerate}[\rm (a)]
    \item if $\Omega$ denotes the collection of subspaces of $R^n$ as row vectors, 
      then $\lambda: M_n(R) \to \Omega$ defined by $(\alpha)\lambda = r(\alpha)$
      is a homomorphism of the actions of $S$ on $M_n(R)$ by right multiplication,
      and the action on $\Omega$ by right multiplication, and $\ker(\lambda) =
      \L^{M_n(R)}$;
    \item if $L$ is any $\L$-class of $M_n(R)$, then $S_{L}$ acts faithfully on 
      $r(\alpha)$ for each $\alpha\in L$;
    \item if $\Omega$ denotes the collection of subspaces of $R^n$ as column
      vectors, then $\rho: M_n(R) \to \Omega$ defined by $(\alpha)\rho =
      c(\alpha)$ is a homomorphism of the actions of $S$ on $M_n(R)$ by left
      multiplication, and the action on $\Omega$ by left multiplication, 
      and $\ker(\rho) = \R^{M_n(R)}$;
    \item if $R$ is any $\R$-class of $M_n(R)$, then ${}_RS$ acts faithfully on 
      $c(\alpha)$ for each $\alpha\in R$.
  \end{enumerate}
\end{prop}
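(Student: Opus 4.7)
The plan is to mirror the proof of Proposition~\ref{prop-transformation} almost verbatim, exploiting Proposition~\ref{prop-greens-general-linear} in place of Proposition~\ref{prop-greens-full-trans}. I would prove parts (a) and (b) in detail; parts (c) and (d) follow by the left--right dual argument (row space replaced by column space, right multiplication by left).

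For part (a), the equality $\ker(\lambda)=\L^{M_n(R)}$ is immediate from Proposition~\ref{prop-greens-general-linear}(a). The action-compatibility $(\alpha\beta)\lambda=(\alpha)\lambda\cdot\beta$ reduces to the identity $r(\alpha\beta)=r(\alpha)\cdot\beta$ in $\Omega$; this is a standard linear-algebra fact, since the rows of $\alpha\beta$ are the images under right multiplication by $\beta$ of the rows of $\alpha$, and these rows span $r(\alpha)$, so their images span $r(\alpha)\cdot\beta=\{v\beta:v\in r(\alpha)\}$.

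For part (b), fix $\alpha\in L$ and define $\zeta:S_L\to S_{r(\alpha)}$ by $(s|_L)\zeta=s|_{r(\alpha)}$, where $s|_{r(\alpha)}$ denotes right multiplication by $s$ restricted to $r(\alpha)$. If $s\in\stab_S(L)$, then $\alpha s\in L$, so $r(\alpha s)=r(\alpha)$, which is exactly $r(\alpha)\cdot s=r(\alpha)$; hence $s|_{r(\alpha)}\in\sym(r(\alpha))$ and $\zeta$ is well-defined. Multiplicativity is clear from $(st)|_{r(\alpha)}=s|_{r(\alpha)}\cdot t|_{r(\alpha)}$. For faithfulness, suppose $s|_{r(\alpha)}=t|_{r(\alpha)}$; since the rows of $\alpha$ lie in $r(\alpha)$, we obtain $\alpha s=\alpha t$, and Lemma~\ref{lem-free} then yields $s|_L=t|_L$. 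Conversely, if $s|_L=t|_L$, then $\alpha s=\alpha t$, and since the rows of $\alpha$ span $r(\alpha)$, linearity of right multiplication gives $vs=vt$ for every $v\in r(\alpha)$, i.e.\ $s|_{r(\alpha)}=t|_{r(\alpha)}$.

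Parts (c) and (d) are dual, using Proposition~\ref{prop-greens-general-linear}(b) and the fact that left multiplication by $\alpha$ sends column vectors in $c(\beta)$ to column vectors spanning $c(\alpha\beta)$. The only point that differs from the transformation case is the translation between the semigroup multiplication and the linear action on a subspace, but this is transparent because matrix multiplication \emph{is} the action. I expect no genuine obstacle; the subtlest step is verifying that $s|_{r(\alpha)}$ actually permutes $r(\alpha)$ (rather than merely mapping into it), and this is forced by $s\in\stab_S(L)$ together with $\alpha s\L^{M_n(R)}\alpha$.
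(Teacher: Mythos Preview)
Your proposal is correct and follows essentially the same approach as the paper: prove (a) via Proposition~\ref{prop-greens-general-linear}(a) together with the row-space identity $r(\alpha\beta)=r(\alpha)\cdot\beta$, and prove (b) by showing that the map $s|_L\mapsto s|_{r(\alpha)}$ is a well-defined monomorphism using Lemma~\ref{lem-free}, with (c) and (d) dual. Your write-up is in fact slightly more detailed than the paper's (you spell out both directions of the faithfulness equivalence and justify why $s|_{r(\alpha)}$ is a bijection), but the structure and key ingredients are identical.
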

\begin{proof} 
  We will prove only parts (a) and (b); the proofs of parts (c) and (d) follow by
  analogous arguments. We will write $L_{\alpha}$ to mean the $\L$-class of
  $\alpha\in M_n(R)$ in $M_n(R)$ throughout this proof. 
  \smallskip

  \textbf{(a).} 
  It follows from Proposition~\ref{prop-greens-general-linear}(a) that 
  $(\alpha)\lambda = (\beta)\lambda$ if and only if $\alpha\L^{M_n(R)}\beta$,
  and so $\ker(\lambda)= \L^{M_n(R)}$.  We also have 
  $$(\alpha\beta)\lambda = r(\alpha\beta)=r(\alpha)\cdot
    \beta=(\alpha)\lambda\cdot \beta$$
  for all $\alpha\in M_n(R)$ and $\beta\in S$, and so $\lambda$ is a
  homomorphism of actions. 
  \smallskip

  \textbf{(b).} Let $L$ be any $\L$-class in $M_n(R)$, let $\alpha\in L$ and let
  $\beta,\gamma\in \stab_{S}(L)$. Then $\alpha\beta\in L$ and so $\beta$ acts on
  $r(\alpha)$ by right multiplication. 
  By Lemma~\ref{lem-free}, it follows that $\beta$ and $\gamma$ have equal
  action on $r(\alpha)$ if and only if $\alpha\beta=\alpha\gamma$ if and only if
  $\alpha|_L=\beta|_L$.
\end{proof}\smallskip


\subsection{Subsemigroups of a Rees 0-matrix semigroup}

In this section, we describe how the results from Section~\ref{section-paradigm}
can be applied to subsemigroups of a Rees 0-matrix semigroup.  We start by
recalling the relevant definitions. 

Let $T$ be a semigroup, let $0$ be an element not in $T$, let $I$ and $J$ be
sets, and let $P=(p_{j,i})_{j\in J,i\in I}$ be a $|J|\times |I|$ matrix with
entries from $T\cup \{0\}$. Then the \textit{Rees $0$-matrix semigroup $\M^0[T;I,
J; P]$} is the set $(I\times T\times J)\cup \{0\}$ with multiplication defined
by 

\begin{equation*}
0x=x0=0 \text{\ for all $x\in\M^0[T;I, J; P]$} \qquad\text{and}\qquad
(i,g,j)(k,h,l)=
  \begin{cases}
    (i,g p_{j,k}h,l)&\text{if } p_{j,k}\neq 0\\
    0&\text{if } p_{j,k}= 0.
  \end{cases}
\end{equation*}

A semigroup $U$ with a zero element $0$ is $0$-\textit{simple} if $U$ and $\{0\}$
are its only ideals.


\begin{thm}[Theorem 3.2.3 in \cite{Howie1995aa} or Theorem A.4.15 in
  \cite{Rhodes2009aa}]
  A finite semigroup $U$ is $0$-simple if and only if it is isomorphic to a Rees
  0-matrix semigroup $\M^0[G;I,J;P]$, where $G$ is a group, and $P$ is regular, in
  the sense that every row and every column contains at least one non-zero entry.
\end{thm}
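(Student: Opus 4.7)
\textbf{The plan is} to prove the two implications separately: the ``if'' direction by direct computation with the Rees multiplication, and the ``only if'' direction by the standard Rees coordinate construction.

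For $(\Leftarrow)$, suppose $U = \M^0[G;I,J;P]$ with $P$ regular; I would show every non-zero ideal $K$ of $U$ equals $U$. Pick $(i,g,j) \in K \setminus \{0\}$ and an arbitrary non-zero $(k,h,l)$. Regularity of $P$ yields $j_0 \in J$ and $i_0 \in I$ with $p_{j_0,i},\, p_{j,i_0} \neq 0$ (and hence in $G$), and then the direct calculation
$$(k,\, 1_G,\, j_0)\,(i,g,j)\,\bigl(i_0,\, (p_{j_0,i}\, g\, p_{j,i_0})^{-1} h,\, l\bigr) \,=\, (k,h,l)$$
shows $(k,h,l) \in K$, so $K = U$.

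For $(\Rightarrow)$, let $U$ be finite and 0-simple. I would first produce a non-zero idempotent $e$: finiteness forces some power of any element to be idempotent, and if every non-zero element were nilpotent then the set of nilpotents would form a proper non-zero ideal, contradicting 0-simplicity. Set $G := H_e^U$, which is a group because $e$ is an idempotent lying in its own $\H^U$-class. Since $\D^U$ in a finite semigroup coincides with the equivalence of having equal principal ideals, and 0-simplicity forces every non-zero element to generate $U$ as a principal ideal, $D := U \setminus \{0\}$ is a single $\D^U$-class. I would index its $\R^U$-classes by $I$ with $e \in R_{i_0}$ and its $\L^U$-classes by $J$ with $e \in L_{j_0}$, and pick representatives $r_i \in R_i \cap L_{j_0}$ and $q_j \in R_{i_0} \cap L_j$ with $r_{i_0} = q_{j_0} = e$. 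Define the sandwich matrix by $p_{j,i} := q_j r_i$, which lies in $G \cup \{0\}$ because the standard product rule gives $H_{i_0 j}\cdot H_{i j_0} \subseteq H_{i_0 j_0}\cup\{0\} = H_e \cup \{0\}$. The candidate isomorphism is $\phi : \M^0[G;I,J;P] \to U$ with $\phi(0) = 0$ and $\phi(i,g,j) = r_i\, g\, q_j$.

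Three verifications remain. Bijectivity of $\phi$ follows from Green's Lemma: in a completely 0-simple semigroup every $H_{ij} := R_i \cap L_j$ is non-empty, and left-multiplication by $r_i$ followed by right-multiplication by $q_j$ yields a bijection $H_e \to H_{ij}$, so every non-zero element of $U$ has a unique expression $r_i g q_j$. Preservation of multiplication reduces to
$$\phi(i,g,j)\,\phi(k,h,l) \,=\, r_i\, g\, (q_j r_k)\, h\, q_l \,=\, r_i\, g\, p_{j,k}\, h\, q_l \,=\, \phi\bigl((i,g,j)(k,h,l)\bigr),$$
with both sides zero precisely when $p_{j,k} = 0$. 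Regularity of $P$ holds because for each $i \in I$ the element $r_i$ is non-zero, so some $x \in U$ must satisfy $x r_i \neq 0$, and writing such an $x$ as $r_k g q_j$ forces $q_j r_i \neq 0$, i.e.\ $p_{j,i} \neq 0$; the row case is dual. \textbf{The main obstacle} is the background assertion that a finite 0-simple semigroup is in fact \emph{completely} 0-simple---every $R_i \cap L_j$ is non-empty, products $H_{ij} H_{kl}$ lie in $H_{il} \cup \{0\}$, and the group structure transports between group $\H^U$-classes---which is what underpins both the well-definedness of $\phi$ and the regularity of $P$.
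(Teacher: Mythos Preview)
The paper does not prove this theorem at all; it is stated with a bare citation to Howie and Rhodes--Steinberg and then used as background. So there is no ``paper's own proof'' to compare against. Your sketch is the classical Rees--Suschkewitsch argument, and it is essentially the proof one finds in those references.

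One point to tighten. In the $(\Rightarrow)$ direction you write that ``if every non-zero element were nilpotent then the set of nilpotents would form a proper non-zero ideal, contradicting 0-simplicity.'' As stated this is off: if every non-zero element is nilpotent then the set of nilpotents is all of $U$, which is not a \emph{proper} ideal, so no contradiction arises that way. The standard route is: since $U$ is $0$-simple and $U^2\neq\{0\}$, we have $UaU=U$ for every non-zero $a$, so $a=xay$ for some $x,y\in U$; iterating gives $a=x^nay^n$, and choosing $n$ with $x^n$ idempotent (finiteness) forces $x^n\neq 0$, producing a non-zero idempotent. Alternatively one may quote directly that a finite $0$-simple semigroup is completely $0$-simple, which you correctly flag as the real content underlying the construction. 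Everything else in your outline (the $(\Leftarrow)$ computation, the coordinate map $\phi(i,g,j)=r_igq_j$, Green's Lemma for bijectivity, and the regularity-of-$P$ argument via $UaU=U$) is fine.
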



Green's relations of a regular Rees $0$-matrix semigroup are described in the
following proposition.


\begin{prop}\label{prop-rees-greens}
  Let $U=\M^0[G;I,J;P]$ be a finite Rees $0$-matrix semigroup where $G$ is a
  group and $P$ is regular. Then the following hold for all $x,y\in U$:
  \begin{enumerate}[\rm (a)]
    \item $x\L^U y$ if and only if $x,y\in I\times G\times \{j\}$ for some $j\in
      J$ or $x=y=0$.  
    \item $x\R^U y$ if and only if $x,y\in \{i\}\times G\times J$ for some $i\in
      I$ or $x=y=0$;
  \end{enumerate}
\end{prop}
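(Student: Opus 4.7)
The plan is to prove part (a) in detail; part (b) will follow by a symmetric argument in which the roles of rows and columns of $P$, and of left and right multiplication, are interchanged.

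First I would isolate the role of the zero element. Since $0$ is absorbing, $U^1\cdot 0=\{0\}$, whereas for any nonzero $(i,g,j)\in U$ we have $(i,g,j)\in U^1(i,g,j)$. Hence $0$ is $\L^U$-related only to itself, and on the nonzero side we may restrict attention to $x,y\in I\times G\times J$.

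For the forward direction, suppose $x=(i,g,j)$ and $y=(k,h,l)$ are nonzero and $\L^U$-related. A direct calculation using the definition of the Rees $0$-matrix multiplication shows that for any $u\in U$, the product $ux$ is either $0$ (when $u=0$ or the relevant entry $p_{b,i}$ of $P$ is zero) or else has the form $(\,\cdot\,,\,\cdot\,,j)$, i.e.\ its third coordinate agrees with that of $x$. Since $y\in U^1x$ and $y\neq 0$, it must be $x$ itself or a nonzero product of that form, so the third coordinate $l$ of $y$ equals $j$; symmetrically $x\in U^1y$ forces $j=l$ again, so $x,y\in I\times G\times\{j\}$ as required.

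For the reverse direction, assume $x=(i,g,j)$ and $y=(k,h,j)$. This is where the regularity of $P$ enters: choose $b\in J$ with $p_{b,i}\neq 0$ (such a $b$ exists because the $i$th column of $P$ contains a nonzero entry) and set
\[
u=\bigl(k,\; hg^{-1}p_{b,i}^{-1},\; b\bigr)\in U.
\]
A direct computation gives $ux=(k,\,hg^{-1}p_{b,i}^{-1}p_{b,i}g,\,j)=(k,h,j)=y$. Applying the same construction with the roles of $x$ and $y$ swapped, and using a nonzero entry in the $k$th column of $P$, yields $v\in U$ with $vy=x$. Hence $U^1x=U^1y$, i.e.\ $x\L^U y$.

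I do not anticipate any real obstacle: the only subtlety is being careful with when products are forced to equal $0$ and with the treatment of $U^1$ versus $U$, but once the zero class is separated the argument is an explicit calculation using the group structure on $G$ and the regularity hypothesis on $P$.
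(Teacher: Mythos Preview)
Your argument is correct. The paper does not actually supply a proof of this proposition; it is stated as a standard fact about regular Rees $0$-matrix semigroups and then used without further justification. Your direct verification---separating off the zero element, observing that left multiplication preserves the third coordinate of any nonzero product, and using regularity of $P$ to manufacture an explicit left multiplier---is the standard way to establish it, and nothing is missing.
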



Obviously, we do not require any theory beyond that given above to compute with
Rees $0$-matrix semigroups, since their size, elements, and in the case that
they are regular, their Green's structure and maximal subgroups too, are part
of their definition.  However, it might be that we would like to compute with a
proper subsemigroup of a Rees $0$-matrix semigroup. Several computational
problems for arbitrary finite semigroups can be reduced, in part, to problems
for associated Rees $0$-matrix semigroups (the principal factors of certain
$\D$-classes). For example, this is the case for finding the automorphism group
\cite{Araujo2010aa}, minimal (idempotent) generating sets \cite{East2015aa,
Gray2005}, or the maximal subsemigroups of a finite semigroup.  In the latter
example, we may wish to determine the structure of the maximal subsemigroups,
which are not necessarily Rees $0$-matrix semigroups themselves.  In the
absence of a method to find a convenient representation of a subsemigroup of a
Rees $0$-matrix semigroup, as, for example, a transformation semigroup, we
would have to compute directly with the subsemigroup. 


\begin{prop}
  \label{prop-rees-matrix}
  Let $S$ be an arbitrary subsemigroup of a finite regular Rees $0$-matrix
  semigroup $U=\M^0[G;I,J;P]$ over a permutation group $G$ acting faithfully on
  $\bn$ for some $n\in \N$. Then:
  \begin{enumerate}[\rm (a)]
    \item $\lambda:U\to J\cup \{0\}$ defined by $(i,g,j)\lambda=j$ and
      $(0)\lambda=0$ is a homomorphism of the actions of $S$ on $U$ by right
      multiplication, and the right action of $S$ on $J\cup\{0\}$ defined by 
      \begin{equation*}
        0\cdot (i,g,j)=0\cdot 0=0=k\cdot 0 \AND
        k\cdot (i,g,j)=
        \begin{cases}
          j&\text{if } p_{k,i}\not=0\\
          0&\text{if } p_{k,i}=0
        \end{cases}
      \end{equation*}
      for all $k\in J$, and $\ker(\lambda) = \L^U$;
    \item if $L$ is any non-zero $\L$-class of $U$, then the action of $S_{L}$
      on $\bn$ defined by 
      $$m\cdot (i,g,j)|_L=(m) p_{j,i}g\quad\text{ for all }\quad m\in
      \bn$$
      is faithful;
    \item $\rho: U\to I\cup \{0\}$ defined by $(i,g,j)\rho=i$ and
      $(0)\rho=0$ is a homomorphism of the actions of $S$ on $U$ by left
      multiplication, and the left action of $S$ on $I\cup\{0\}$ defined by 
      \begin{equation*}
        (i,g,j)\cdot 0=0\cdot 0=0=0\cdot k\AND        
        (i,g,j)\cdot k=
        \begin{cases}
          i&\text{if } p_{j,k}\not=0\\
          0&\text{if } p_{j,k}=0
        \end{cases}
      \end{equation*}
      for all $k\in I$, and $\ker(\rho) = \R^U$;
    \item if $R$ is any non-zero $\R$-class of $U$, then the action of ${}_RS$ on 
      $\bn$ defined by 
      $${}_R|(i,g,j)\cdot m=(m) g^{-1}p_{j,i}^{-1}\quad\text{ for all }\quad m\in
      \bn$$
      is faithful.
  \end{enumerate}
\end{prop}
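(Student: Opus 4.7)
The plan is to treat the four parts via direct computation in the Rees 0-matrix semigroup, with parts (c) and (d) handled by an essentially mechanical left/right dualization of the arguments for (a) and (b). Nothing deep is needed beyond the multiplication table of $\M^0[G;I,J;P]$, Proposition~\ref{prop-rees-greens}, and the hypothesis that $G$ acts faithfully on $\bn$.

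First I would dispatch part (a). Take $x\in U$ and $s\in S$: if either is $0$ both sides trivially collapse to $0$, and if $x=(i,g,j)$ and $s=(k,h,l)$ then the Rees product formula gives $xs=(i,gp_{j,k}h,l)$ when $p_{j,k}\neq 0$ and $xs=0$ otherwise. Applying $\lambda$ yields $l$ in the first case and $0$ in the second, which matches the defined action of $s$ on $(x)\lambda=j$ exactly. Thus $\lambda$ is a homomorphism of actions, and $\ker(\lambda)=\L^U$ is immediate from Proposition~\ref{prop-rees-greens}(a), since $(x)\lambda=(y)\lambda$ says precisely that $x$ and $y$ share the same $J$-coordinate (or are both $0$).

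The heart of the proof is part (b). Pick a non-zero $\L$-class $L$; by Proposition~\ref{prop-rees-greens}(a) we may write $L=I\times G\times\{j\}$ for a fixed $j\in J$. Unravelling the definition of $\stab_S(L)$ via the Rees multiplication shows that an element $s=(i,g,j')\in S$ lies in $\stab_S(L)$ iff $j'=j$ and $p_{j,i}\neq 0$, and for such $s$ the action on $L$ is $(k,f,j)\cdot s=(k,fp_{j,i}g,j)$. Hence on the middle coordinate $s$ acts as right multiplication by the group element $p_{j,i}g\in G$. Now define $\zeta\colon S_L\to\sym(\bn)$ by sending $s|_L$ to the image of $p_{j,i}g$ under the faithful action of $G$ on $\bn$; explicitly, $m\cdot(s|_L)\zeta=(m)p_{j,i}g$. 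The key observation, which I would state as a one-line lemma, is that for $s=(i_1,g_1,j),t=(i_2,g_2,j)\in\stab_S(L)$ the following are all equivalent: $s|_L=t|_L$; $fp_{j,i_1}g_1=fp_{j,i_2}g_2$ for all $f\in G$; $p_{j,i_1}g_1=p_{j,i_2}g_2$ in $G$; and $(m)p_{j,i_1}g_1=(m)p_{j,i_2}g_2$ for all $m\in\bn$ (the last equivalence is where faithfulness of $G$ on $\bn$ is used). This simultaneously shows $\zeta$ is well-defined and injective. Verifying that $\zeta$ is a homomorphism (hence the formula really defines an action) is then a short calculation using the formula $s_1s_2=(i_1,g_1p_{j,i_1}g_2,j)$ together with associativity in $G$.

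Parts (c) and (d) follow by the analogous argument applied to left multiplication, using Proposition~\ref{prop-rees-greens}(b) in place of (a), together with the convention (established in Section~\ref{section-prerequisites}) for converting the right action of $G$ on $\bn$ into the left action required by the statement. I expect no substantive obstacle; the only point that needs a bit of care is the ordering in the formula for the left action, which must be chosen so that the representation $\stab_S(R)\to\sym(\bn)$ turns multiplication in $U$ into composition of left actions on $\bn$. Once that ordering is pinned down by the same two-line calculation used in (b), faithfulness again comes for free from the faithfulness of $G$'s action on $\bn$.
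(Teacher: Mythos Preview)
Your proposal is correct and follows essentially the same approach as the paper's proof: direct case-by-case verification for (a), and for (b) constructing the map $s|_L \mapsto p_{j,i}g$ and checking it is a well-defined injective homomorphism (the paper invokes Lemma~\ref{lem-free} where you quantify over all $f\in G$ directly, which amounts to the same thing). One small slip to fix in your homomorphism check: the product $s_1s_2$ should read $(i_1,\,g_1 p_{j,i_2} g_2,\,j)$, with $p_{j,i_2}$ rather than $p_{j,i_1}$.
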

\begin{proof} 
  We prove only parts (a) and (b); parts (c) and (d) follow by
  analogous arguments.
  \smallskip

  \noindent\textbf{(a).} 
  It follows by Proposition~\ref{prop-rees-greens}(a) that
  $(x)\lambda=(y)\lambda$ if and only if $x\L^U y$, for each $x,y\in U$, and so
  the kernel of $\lambda$ is $\L^U$.  We will show that $\lambda$ is a
  homomorphism of actions. 
  
  Let  $x\in U$ and $s\in S$ be arbitrary. We must show that
  $(xs)\lambda=(x)\lambda\cdot s$.  If $x=0$ or $s=0$, then
  $(xs)\lambda=(0)\lambda=0=(x)\lambda\cdot s$.  Suppose that $x=(i,g,j)\in
  U\setminus \{0\}$ and $s=(k,h,l)\in S\setminus \{0\}$. If $p_{j,k}=0$, then
  $xs=0$ and so 
  $$(xs)\lambda=(0)\lambda=0=j\cdot (k,h,l)=(x)\lambda\cdot s.$$
  If $p_{j,k}\not=0$, then 
  $$(xs)\lambda=l=j\cdot (k,h,l) = (x)\lambda\cdot s.$$
  \smallskip
  
  \noindent\textbf{(b).}
  Let $x=(i,g,j)\in U\setminus \{0\}$ and let $L=L_x^U=\set{(i', g', j)}{i' \in
  I,\ g' \in G}$. If $(k,h,l)\in
  \stab_S(L)$ is arbitrary, then, since $L\cdot (k,h,l)=L$, it follows that
  $p_{j, k}\not=0$ and $l=j$. It follows that we may define a mapping $\zeta:
  S_L\to G$ by $((k,h,l)|_L)\zeta=p_{j,k}h$. 

  If $(k_1, h_1, j), (k_2, h_2, j)\in \stab_S(L)$, then, by
  Lemma~\ref{lem-free}, it follows that 
  \begin{eqnarray*} 
    (k_1, h_1, j)|_L=(k_2, h_2, j)|_L
    &\iff&(i, g, j)(k_1, h_1, j)=(i, g, j)(k_2, h_2, j)\\
    &\iff& (i, g p_{j,k_1}h_1, j)= (i, g p_{j,k_2}h_2, j)\\
    &\iff&p_{j,k_1}h_1=p_{j, k_2}h_2. 
  \end{eqnarray*}
  Hence, $\zeta$ is well-defined and injective. 

  To show that $\zeta$ is a homomorphism, suppose that 
  $(k_1, h_1, j), (k_2, h_2, j)\in \stab_S(L)$. Then 
  \begin{eqnarray*}
    ((k_1, h_1, j)|_L(k_2, h_2, j)|_{L})\zeta&=&(((k_1, h_1, j)(k_2, h_2,
    j))|_L)\zeta\\
    &=&((k_1, h_1p_{j,k_2}h_2, j)|_{L})\zeta\\
    &=&p_{j, k_1}h_1p_{j,k_2}h_2\\
    &=&((k_1, h_1, j)|_L)\zeta\cdot ((k_2, h_2, j)|_L)\zeta,
  \end{eqnarray*}
  as required.  
\end{proof}


\subsection{Partition monoids}

Let $n\in\mathbb{N}$, let $\bn=\{1,\ldots,n\}$, and let
$-\bn=\{-1,\ldots,-n\}$.  A \textit{partition} of $\bn\cup-\bn$ is a
set of pairwise disjoint non-empty subsets of $\bn\cup-\bn$ (called
\textit{blocks}) whose union is $\bn\cup-\bn$. If
$i,j\in\bn\cup-\bn$ belong to the same block of a partition $x$,
then we write $(i,j)\in x$.

If $x$ and $y$ are partitions of $\bn\cup-\bn$, then we define the product $xy$
of $x$ and $y$ to be the partition where for $i,j\in\bn$
\begin{enumerate}[(i)] 
  \item  $(i,j)\in xy$ if and only if $(i,j)\in x$ or there exist $a_{1},
    \ldots, a_{2r}\in \bn$, for some $r \geq 1$, such that 
    \begin{equation*}
      (i,-a_{1})\in x, \quad (a_{1},a_{2})\in y,\quad (-a_{2},-a_{3})\in
      x,\quad \ldots, \quad (a_{2r-1}, a_{2r})\in y, \quad (-a_{2r}, j)\in x
    \end{equation*}
  \item $(i, -j)\in xy$ if and only if 
    there exist $a_{1}, \ldots, a_{2r - 1}\in \bn$, for some $r\geq 1$, such
    that 
    $$(i,-a_{1})\in x,\quad (a_{1},a_{2})\in y,\quad (-a_{2},-a_{3})\in x,\quad
    \ldots, \quad (-a_{2r-2},-a_{2r-1})\in x, \quad (a_{2r-1},-j)\in y$$
  \item $(-i, -j)\in xy$ if and only if $(-i, -j)\in y$ or there exist $a_{1},
    \ldots, a_{2r}\in \bn$, for some $r \geq 1$, such that
     $$(-i,a_{1})\in y,\quad (-a_{1},-a_{2})\in x,\quad (a_{2},a_{3})\in y,\quad
     \ldots,\quad (-a_{2r-1}, -a_{2r})\in x,\quad \quad(a_{2r}, -j)\in y$$
\end{enumerate}
for $i,j\in \bn$. 

This product can be shown to be associative, and so the collection of
partitions of $\bn\cup-\bn$ is a monoid; the identity element is the
partition $\left\{\{i,-i\}:i\in\bn\right\}$.  
This monoid is called the \textit{partition monoid} and is denoted $P_n$. 

It can be useful to represent a partition as a graph with vertices $\bn\cup-\bn$
and the minimum number of edges so that the connected components of the graph
correspond to the blocks of the partition. Of course, such a representation is
not unique in general.  An example is given in Figure~\ref{fig-1-partition} for the
partitions:
\begin{eqnarray*}
  x & = & \big\{ \{ 1, -1 \}, \{ 2 \}, \{ 3 \}, \{ 4, -3 \}, \{ 5, 6, -5, -6 \},
  \{ -2, -4 \} \big\}\\
  y & = & \big\{ \{1, 4, -1, -2, -6 \}, \{ 2, 3, 5, -4 \}, \{ 6, -3 \}, \{ -5
  \}\big\}
\end{eqnarray*}
and the product 
\begin{eqnarray*}
  xy & = & \big\{\{ 1, 4, 5, 6, -1, -2, -3, -4, -6 \}, \{ 2 \}, \{ 3 \}, \{ -5
  \} \big\}
\end{eqnarray*}
is shown in Figure~\ref{fig-2-partition-prod}. 

\begin{figure}[ht]
  \begin{center}
    \begin{tikzpicture}[scale = 0.7]

      \fill(1,2)circle(.125);
      \draw(0.95, 2.2) node [above] {{$1$}};
      \fill(1,0)circle(.125);
      \draw(1, -0.2) node [below] {{$-1$}};

      \draw(1,2)--(1,0);

      \fill(2,2)circle(.125);
      \draw(1.95, 2.2) node [above] {{$2$}};


      \fill(3,2)circle(.125);
      \draw(2.95, 2.2) node [above] {{$3$}};


      \fill(4,2)circle(.125);
      \draw(3.95, 2.2) node [above] {{$4$}};
      \fill(3,0)circle(.125);
      \draw(3, -0.2) node [below] {{$-3$}};

      \draw(4,2)--(3,0);

      \fill(5,2)circle(.125);
      \draw(4.95, 2.2) node [above] {{$5$}};
      \fill(6,2)circle(.125);
      \draw(5.95, 2.2) node [above] {{$6$}};
      \fill(5,0)circle(.125);
      \draw(5, -0.2) node [below] {{$-5$}};
      \fill(6,0)circle(.125);
      \draw(6, -0.2) node [below] {{$-6$}};

      \draw(5,1.875) .. controls (5,1.41667) and (6,1.41667) .. (6,1.875);
      \draw(5,0.125) .. controls (5,0.583333) and (6,0.583333) .. (6,0.125);
      \draw(5,2)--(5,0);

      \fill(2,0)circle(.125);
      \draw(2, -0.2) node [below] {{$-2$}};
      \fill(4,0)circle(.125);
      \draw(4, -0.2) node [below] {{$-4$}};

      \draw(2,0.125) .. controls (2,0.666667) and (4,0.666667) .. (4,0.125);
    \end{tikzpicture}
    \qquad
    \begin{tikzpicture}[scale = 0.7]

      \fill(1,2)circle(.125);
      \draw(0.95, 2.2) node [above] {{$1$}};
      \fill(4,2)circle(.125);
      \draw(3.95, 2.2) node [above] {{$4$}};
      \fill(1,0)circle(.125);
      \draw(1, -0.2) node [below] {{$-1$}};
      \fill(2,0)circle(.125);
      \draw(2, -0.2) node [below] {{$-2$}};
      \fill(6,0)circle(.125);
      \draw(6, -0.2) node [below] {{$-6$}};

      \draw(1,1.875) .. controls (1,1.25) and (4,1.25) .. (4,1.875);
      \draw(1,0.125) .. controls (1,0.583333) and (2,0.583333) .. (2,0.125);
      \draw(2,0.125) .. controls (2,0.833333) and (6,0.833333) .. (6,0.125);
      \draw(1,2)--(1,0);

      \fill(2,2)circle(.125);
      \draw(1.95, 2.2) node [above] {{$2$}};
      \fill(3,2)circle(.125);
      \draw(2.95, 2.2) node [above] {{$3$}};
      \fill(5,2)circle(.125);
      \draw(4.95, 2.2) node [above] {{$5$}};
      \fill(4,0)circle(.125);
      \draw(4, -0.2) node [below] {{$-4$}};

      \draw(2,1.875) .. controls (2,1.41667) and (3,1.41667) .. (3,1.875);
      \draw(3,1.875) .. controls (3,1.33333) and (5,1.33333) .. (5,1.875);
      \draw(3,2) .. controls (3,1) and (4,1) .. (4,0);

      \fill(6,2)circle(.125);
      \draw(5.95, 2.2) node [above] {{$6$}};
      \fill(3,0)circle(.125);
      \draw(3, -0.2) node [below] {{$-3$}};

      \draw(6,2)--(3,0);

      \fill(5,0)circle(.125);
      \draw(5, -0.2) node [below] {{$-5$}};

    \end{tikzpicture}
    \caption{Graphical representations of the partitions $x,y\in P_6$.}
    \label{fig-1-partition}

  \end{center}
\end{figure}

\begin{figure}[ht]
  \begin{center}
    \begin{tikzpicture}[scale = 0.7]

      \fill(1,2)circle(.125);
      \draw(0.95, 2.2) node [above] {{ $1$}};
      \fill(4,2)circle(.125);
      \draw(3.95, 2.2) node [above] {{ $4$}};
      \fill(5,2)circle(.125);
      \draw(4.95, 2.2) node [above] {{ $5$}};
      \fill(6,2)circle(.125);
      \draw(5.95, 2.2) node [above] {{ $6$}};
      \fill(1,0)circle(.125);
      \draw(1, -0.2) node [below] {{ $-1$}};
      \fill(2,0)circle(.125);
      \draw(2, -0.2) node [below] {{ $-2$}};
      \fill(3,0)circle(.125);
      \draw(3, -0.2) node [below] {{ $-3$}};
      \fill(4,0)circle(.125);
      \draw(4, -0.2) node [below] {{ $-4$}};
      \fill(6,0)circle(.125);
      \draw(6, -0.2) node [below] {{ $-6$}};

      \draw(1,1.875) .. controls (1,1.25) and (4,1.25) .. (4,1.875);
      \draw(4,1.875) .. controls (4,1.41667) and (5,1.41667) .. (5,1.875);
      \draw(5,1.875) .. controls (5,1.41667) and (6,1.41667) .. (6,1.875);
      \draw(1,0.125) .. controls (1,0.583333) and (2,0.583333) .. (2,0.125);
      \draw(2,0.125) .. controls (2,0.583333) and (3,0.583333) .. (3,0.125);
      \draw(3,0.125) .. controls (3,0.583333) and (4,0.583333) .. (4,0.125);
      \draw(4,0.125) .. controls (4,0.666667) and (6,0.666667) .. (6,0.125);
      \draw(1,2)--(1,0);

      \fill(2,2)circle(.125);
      \draw(1.95, 2.2) node [above] {{ $2$}};


      \fill(3,2)circle(.125);
      \draw(2.95, 2.2) node [above] {{ $3$}};


      \fill(5,0)circle(.125);
      \draw(5, -0.2) node [below] {{ $-5$}};

      \end{tikzpicture}
    \caption{A graphical representation of the product $xy\in P_6$.}
    \label{fig-2-partition-prod}
  \end{center}
\end{figure}

A block of a partition containing elements of both $\bn$ and -$\bn$ is called a
\textit{transverse block}. 
If $x\in P_n$, then we define $x^*$ to be the partition obtained from
$x$ by replacing $i$ by $-i$ and $-i$ by $i$ in every block of $x$ for all
$i\in\bn$.  It is routine to verify that if $x,y\in P_n$, then  
$$(x^*)^*=x,\quad xx^*x=x,\quad x^*xx^*=x^*,\quad (xy)^*=y^*x^*.$$
In this way, the partition monoid is a \emph{regular $*$-semigroup} in the sense
of \cite{Nordahl1978}.

If $x\in P_n$ is arbitrary, then $xx^*$ and $x^*x$ are idempotents; called the
\textit{projections} of $x$. We will write 
$$\proj(P_n) = \set{xx^*}{x\in P_n} = \set{x^*x}{x\in P_n}.$$
If $B$ is a transverse block of $xx^*$ (or $x^*x$), then $i\in B$ if and only
if $-i\in B$. If $B$ is a non-transverse block of $xx^*$, then
$-B=\set{-b}{b\in B}$ is also a block of $xx^*$.

\begin{prop}[cf. \cite{FitzGerald2011aa,Wilcox2007aa}]
  \label{prop-greens-partn}
  Let $n\in \N$ and let $x,y\in P_n$.
  Then the following hold:
  \begin{enumerate}[\rm (a)]
    \item $x\L^{P_n}y$ if and only if $x^* x = y^* y$;
    \item $x\R^{P_n}y$ if and only if $x x^* = y y ^*$;
    \item $x\D^{P_n}y$ if and only if $x$ and $y$ have the same number of
      transverse blocks.
  \end{enumerate}
\end{prop}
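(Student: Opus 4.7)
The plan is to establish (a) first, deduce (b) by a symmetric argument, and then use both to prove (c).

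For (a), I will first show that every $x \in P_n$ is $\L^{P_n}$-related to the projection $x^*x$. This uses that $x^*x = x^* \cdot x \in P_n^1 x$, and conversely that $x = xx^*x = x \cdot (x^*x) \in P_n^1 \cdot x^*x$. Hence if $x \L^{P_n} y$ then $x^*x \L^{P_n} y^*y$, and it remains to show that two $\L^{P_n}$-related projections must be equal. If $e, f \in P_n$ are $\L^{P_n}$-related idempotents, then $ef = e$ and $fe = f$ (writing $e = af$, $f = be$ with $a, b \in P_n^1$ and using $e^2 = e$, $f^2 = f$). Applying $*$ to $ef = e$ and using $e^* = e$, $f^* = f$ for projections gives $fe = e$; combined with $fe = f$, this forces $e = f$. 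The converse direction of (a) is direct: $x^*x = y^*y$ implies $y = y(y^*y) = y(x^*x) = (yx^*)x \in P_n^1 x$, and symmetrically $x \in P_n^1 y$.

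Part (b) follows by the entirely dual argument, swapping left and right ideals and using $xx^*$ in place of $x^*x$.

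For (c), I plan to use a structural lemma: the number of transverse blocks of any $w \in P_n$ equals the number of transverse blocks of $ww^*$, and equally of $w^*w$. With this lemma in hand, the forward direction is immediate, since $x \D^{P_n} y$ gives some $z \in P_n$ with $x \L^{P_n} z \R^{P_n} y$, and (a), (b), and the lemma force $x$, $z$, and $y$ to share the same transverse-block count. For the backward direction, given $x, y \in P_n$ with a common number $k$ of transverse blocks, I would construct $z \in P_n$ with $x \R^{P_n} z \L^{P_n} y$ as follows: let $z$ restrict to $\bn$ exactly as $x$ does, with the same designation of transverse blocks, let $z$ restrict to $-\bn$ exactly as $y$ does, and identify the $k$ transverse upper halves coming from $x$ with the $k$ transverse lower halves coming from $y$ via any chosen bijection to form the $k$ transverse blocks of $z$. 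Since the structural lemma shows that $ww^*$ is determined solely by the upper pattern of $w$, one gets $zz^* = xx^*$, and so $x \R^{P_n} z$ by (b); dually $z \L^{P_n} y$ by (a).

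The main obstacle is the structural lemma itself. I expect to prove it by directly applying the product rule for partition composition given earlier in the paper: the blocks of $ww^*$ should turn out to be $B$ and $-B$ for each non-transverse block $B \subseteq \bn$ of $w$, together with $B \cup -B$ for each transverse block of $w$ with $\bn$-part $B$. This explicit description yields the transverse-block count and also makes transparent the fact, used above, that $ww^*$ depends only on the upper pattern of $w$.
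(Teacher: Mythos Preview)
The paper does not supply its own proof of this proposition; it simply cites \cite{FitzGerald2011aa,Wilcox2007aa} and moves on. So there is no ``paper's proof'' to compare against, and your proposal stands on its own.

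Your argument is correct. Part (a) is the standard regular $*$-semigroup argument: every element is $\L$-related to its projection $x^*x$, and two $\L$-related projections coincide because $ef=e$, $fe=f$ together with $e^*=e$, $f^*=f$ force $e=f$. Part (b) is dual. For part (c), your structural lemma---that the blocks of $ww^*$ are exactly $B$ and $-B$ for each non-transverse upper block $B$ of $w$, and $B\cup(-B)$ for each transverse block with $\bn$-part $B$---is correct and follows from the composition rule; the key observation is that the middle layer in $w\cdot w^*$ is a mirror image of itself, so each transverse block $B\cup(-C)$ of $w$ meets exactly its reflected partner $C\cup(-B)$ in $w^*$, with no cross-fusion. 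This lemma immediately gives both the transverse-block count and the fact that $ww^*$ depends only on the upper pattern of $w$, which is precisely what your construction of $z$ in the backward direction of (c) needs. The construction of $z$ (upper pattern from $x$, lower pattern from $y$, matched by any bijection of the $k$ transverse halves) is the standard one and works as you describe.

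One small point worth making explicit when you write it up: in verifying the description of $ww^*$, you should note that non-transverse \emph{lower} blocks $-D$ of $w$ (equivalently, non-transverse upper blocks $D$ of $w^*$) are absorbed in the middle and contribute nothing to $ww^*$; this is why no unexpected fusion occurs.
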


The characterisation in Proposition~\ref{prop-greens-partn} can be used to
define representations of the actions mentioned above. 

\begin{prop}
  \label{prop-partition}
  Let $S$ be an arbitrary subsemigroup of $P_n$. Then:
  \begin{enumerate}[\rm (a)]
    \item 
      $\lambda: P_n\to \proj(P_n)$ defined by $(x)\lambda = x^* x$
      is a homomorphism between the action of $S$ on $P_n$ by right multiplication
      and the right action of $S$ on $\proj(P_n)$ defined by 
      $$ x^* x\cdot y = (xy)^*xy = y^*  x^* x y$$
      and the kernel of $\lambda$ is $\L^{P_n}$;

    \item if $L$ is any $\L$-class of $P_n$, then $S_{L}$ acts faithfully on 
      the transverse blocks of $x^*x$ for each $x\in L$;

    \item $\rho: P_n\to \proj(P_n)$ defined by $(x)\rho = xx^*$
      is a homomorphism between the action of $S$ on $P_n$ by left multiplication
      and the left action of $S$ on $\proj(P_n)$ defined by 
      $$ y \cdot x x ^ * = yx(yx)^* = y  x x^* y^*$$
      and the kernel of $\rho$ is $\R^{P_n}$;

    \item if $R$ is any $\R$-class of $P_n$, then ${}_RS$ acts faithfully on 
      the transverse blocks of $xx^*$ for each $x\in R$.

  \end{enumerate}
\end{prop}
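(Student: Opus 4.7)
The plan is to follow the template of the proofs of Propositions~\ref{prop-transformation}, \ref{prop-partial-perms}, \ref{prop-matrix-semigroups} and~\ref{prop-rees-matrix}, using Proposition~\ref{prop-greens-partn} as the main input together with the $*$-semigroup identities $(x^*)^* = x$, $xx^*x = x$, and $(xy)^* = y^*x^*$. I treat (a) and (b); parts (c) and (d) follow by symmetric arguments, obtained by swapping left and right multiplication, replacing $x^*x$ by $xx^*$, and appealing to Proposition~\ref{prop-greens-partn}(b) and Proposition~\ref{prop-main-1.5} in place of their right-hand counterparts.

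For (a), the identification $\ker(\lambda) = \L^{P_n}$ is immediate from Proposition~\ref{prop-greens-partn}(a). I next verify that $e \cdot y := y^*ey$ defines a right action of $S$ on $\proj(P_n)$: writing $e = z^*z$ we have $y^*ey = (zy)^*(zy) \in \proj(P_n)$, and the axioms $1^*e1 = e$ (using that the identity of $P_n$ is fixed by $*$) and $(e\cdot y)\cdot z = z^*y^*eyz = (yz)^*e(yz)$ are immediate from the $*$-identities. The homomorphism condition for $\lambda$ then reduces to $(xy)\lambda = (xy)^*(xy) = y^*x^*xy = (x)\lambda\cdot y$, as stated.

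For (b), fix $x \in L$, set $e = x^*x$, and let $T$ be the set of transverse blocks of $e$. By Proposition~\ref{prop-main-1}(b) applied with $S = U = P_n$, the group $(P_n)_L$ is isomorphic via $s|_L \mapsto xs$ to the $\H$-class $H_x^{P_n} = L_x^{P_n}\cap R_x^{P_n}$. An element $h\in H_x^{P_n}$ has $h^*h = e$ and $hh^* = xx^*$, and as is classical for the partition monoid (cf.\ \cite{FitzGerald2011aa, Wilcox2007aa}) is uniquely determined by the bijection it induces between the transverse blocks of $xx^*$ and those of $e$; composing this bijection with the one coming from $x$ itself yields an isomorphism $H_x^{P_n}\to\sym(T)$. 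The composition of the two isomorphisms is a faithful representation $\zeta:(P_n)_L\to\sym(T)$, and its restriction to the subgroup $S_L\leq (P_n)_L$ provides the desired faithful action.

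The main obstacle is the explicit identification $H_x^{P_n}\cong \sym(T)$ used in (b): once this is in hand, faithfulness of the induced action transfers automatically to any subgroup, in particular to $S_L$. A self-contained alternative to citing \cite{FitzGerald2011aa, Wilcox2007aa} is to verify faithfulness directly via Lemma~\ref{lem-free}, by observing that two elements $s, t\in\stab_S(L)$ inducing the same permutation of $T$ must produce $xs$ and $xt$ with identical transverse matchings over the common projections $x^*x$ and $xx^*$, forcing $xs = xt$ and hence $(s)\l{x} = (t)\l{x}$.
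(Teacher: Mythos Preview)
Your treatment of (a) matches the paper's almost exactly; the only difference is that you explicitly verify the action axioms on $\proj(P_n)$, which the paper leaves implicit.

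For (b) you take a genuinely different route. The paper constructs the action concretely: denoting by $B_1,\ldots,B_r$ the intersections of the transverse blocks of $x^*x$ with $\bn$, it defines for each $y\in\stab_S(L_x^{P_n})$ the relation $p_y=\set{(i,j)}{\exists k\in B_i,\ \exists l\in B_j,\ (k,-l)\in x^*xy}$, and then proves by hand that $p_y$ is a well-defined permutation, that $y\mapsto p_y$ is a homomorphism, and that it is injective --- the last step being a careful case analysis showing that $p_y=p_z$ forces $x^*xy=x^*xz$ block by block. Your argument instead invokes Proposition~\ref{prop-main-1}(b) with $S=U=P_n$ to identify $(P_n)_L$ with the group $\H$-class $H_x^{P_n}$, then cites the known fact (from \cite{FitzGerald2011aa,Wilcox2007aa}) that $H_x^{P_n}\cong\sym(T)$ via the transverse matching, and finally restricts to the subgroup $S_L$. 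This is correct and more conceptual; it trades the block-by-block combinatorics for an appeal to the literature. The paper's approach buys self-containment and, more importantly for its purposes, an explicit formula for the permutation $p_y$ in terms of $x^*xy$, which is exactly what is needed to compute $(x's)\mu_x$ in Assumption~(\ref{lambda-perm}). Your closing ``self-contained alternative'' via Lemma~\ref{lem-free} is in fact a compressed version of the paper's injectivity argument.
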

\begin{proof} We prove only parts (a) and (b), since the other parts are dual.
  \smallskip
  
  \noindent\textbf{(a).}
    Let $x\in P_n$ and $s\in S$ be arbitrary.  Then
    $$ (xs)\lambda = (xs)^*xs = s^*(x^*x)s = (x)\lambda\cdot s. $$
    Together with Proposition~\ref{prop-greens-partn}, this completes the proof
    of (a).
  \smallskip
  
  \noindent\textbf{(b).} Let $x\in P_n$ be arbitrary and suppose that $y\in
  \stab_S(L_x^{P_{n}})$. It follows that $x^*x = (x)\lambda = (xy)\lambda =
  (xy)^*xy = y^*x^*xy$.  We denote the intersection of the transverse blocks of
  $x^*x$ with $\bn$ by $B_1, \ldots, B_r$ and we define the binary relation
  $$p_y = \set{(i,j)\in \mathbf{r}\times\mathbf{r}}{\exists k\in B_i,\
  \exists l\in B_j,\ (k,-l)\in x^*xy}.$$ 
  We will show that $p_y$ is a permutation, and that $\zeta:S_{L_x^{P_n}}\to
  \sym(\mathbf{r})$ defined by $(y|_{L_x^{P_n}})\zeta = p_y$ is a monomorphism. 
  
  Seeking a contradiction, assume that there exist $i,j,j'\in \mathbf{r}$
  such that $j\not= j'$ and $(i, j), (i, j')\in p_y$. Then there exist
  $k,k'\in B_i$, $l\in B_{j}$, and $l'\in B_{j'}$ such that $(k,-l),
  (k',-l')\in x^*xy$.  Since $k,k'\in B_i$, it follows that $(k,k')\in x^*x$
  and so $(k,k')\in x^*xy$. Since $x^*xy$ is an equivalence relation, it
  follows that $(-l, -l') \in x^*xy$, which implies that $(-l, -l')\in
  y^*x^*xy = x^*x$, and so $(l,l')\in x^*x$, a contradiction. Hence
  $p_y$ is a function. 

  Since $\bn$ is finite, to show that $p_y$ is a permutation it suffices to
  show that it is surjective.  Suppose that $i\in \mathbf{r}$ and $l \in B_i$
  are arbitrary.  Then $(l, -l)\in x^*x = y^*x^*xy$, and so by part (ii) of the
  definition of the multiplication of the partitions $y^*$ and $x^*xy$, there
  exists $k \in \bn$ such that $(k,-l)\in x^*xy$.  In other words, $k$ belongs
  to a transverse block of $x^*xy$, and hence to a transverse block of
  $x^*x$. Thus there exists $j\in\mathbf{r}$ such that $k\in B_j$ and so
  $(j)p_y= i$, as required. 

  Note that, since $x^*xy$ is an equivalence relation and $p_y$ is a
  permutation, the transverse blocks of $x^*xy$ are of the form
  $B_{i}\times -B_{(i)p_y}$. 

  Next, we show that $\zeta:S_{L_x^{P_n}}\to \sym(\mathbf{r})$ defined by
  $(y|_{L_x^{P_n}})\zeta = p_y$ is a homomorphism.  Let
  $y,z\in\stab_S(L_x^{P_n})$. It suffices to prove that $p_{y}p_{z} = p_{yz}$.
  If $i\in \mathbf{r}$, then there exist $k\in B_i$, $l,l'\in B_{(i)p_y}$, and
  $m\in B_{((i)p_y)p_z}$ such that $(k, -l)\in x^*xy$ and $(l', -m) \in
  x^*xz$. Since $l,l'\in B_{(i)p_y}$ (a transverse block of $x^*x$),
  $(l,-l')\in x^*x$, and so $(k, -m)\in x^*xy\cdot x^*x\cdot x^*xz =  x^* x y x ^*
  xz$.  Since $xy\L^{P_n} x$ and $x^*x$ is a right identity in its $\L^{P_n}$-class,
  it follows that $xyx^*x= xy$ and so $x^* x y x ^* xz = x^*xyz$.  In
  particular, $(k, -m)\in x^*xyz$, and so $(i)p_{yz} = ((i)p_y)p_z$, as
  required.

  It remains to prove that $\zeta$ is injective. Suppose that $y, z\in
  \stab_S(L_x^{P_{n}})$ are such that $p_y = p_z$. It suffices, by
  Lemma~\ref{lem-free}, to show that $xy = xz$. We will prove that
  $x^*xy=x^*xz$ so that $xy = xx ^ *xy = xx ^ *xz = xz$. Since the transverse
  blocks of $x^*xy$ are $B_{i}\times -B_{(i)p_y} = B_{i}\times -B_{(i)p_z}$
  where $i\in \mathbf{r}$, it follows that the transverse blocks of $x^*xy$ and
  $x^*xz$ coincide. Suppose that $(k, l) \in x^*xy$ where neither $k$ nor $l$
  belongs to a transverse block of $x^*xy$. It follows from the form of the
  transverse blocks of $x^*xy$ that neither $k$ nor $l$ belongs to a transverse
  block of $x^*x$. There are two cases to consider: $k, l > 0$ and $k, l < 0$.
  In the first case, by part (i) of the definition of the mulitplication of
  $x^*x$ and $y$, either $(k,l) \in x^*x$ or $k$ and $l$ belong to transverse
  blocks of $x^*x$. Since the latter is not the case, $(k,l)\in x^*x$ and so
  $(k,l)\in x^*xz$.  In the second case, when $k, l < 0$, it follows from part
  (iii) of the definition of the multiplication of $x^*x$ and $y$, that $(k,
  l)\in y$ and so $(k,l) \in y^*x^*xy=z^*x^*xz$.  By part (iii) of the
  definition of the multiplication of $z^*$ and $x^*xz$, either $(k,l)\in
  x^*xz$ or both $k$ and $l$ belong to transverse blocks of $x^*xz$. Since the
  transverse blocks of $x^*xy$ and $x^*xz$ coincide and contain neither $k$ nor
  $l$, it is the case that $(k,l)\in x^*xz$. Thus $x^*xy\subseteq x^*xz$ and,
  by symmetry, $x^*xz \subseteq x^*xy$. Therefore $xy=xz$, as required.
\end{proof}


\section{Algorithms}
\label{section-algorithms}

In this section, we outline some algorithms for computing with semigroups
that utilise the results in Section~\ref{section-paradigm}.  The algorithms
described in this section are implemented in the \GAP \cite{GAP4} package
\Semigroups \cite{Mitchell2016aa} in their full generality, and can currently be
applied to semigroups of transformations, partial permutations, partitions, and
to subsemigroups of regular Rees $0$-matrix semigroups over groups. 

Throughout this section, we will suppose that $U$ is a finite regular
semigroup, and that $S$ is subsemigroup of $U$ generated by  $X=\{x_1, \ldots,
x_m\}\subseteq U$.  The purpose of the algorithms described in this section is
to answer various questions about the structure of the semigroup $S$.

Recall that $U ^ 1$ has an identity $1_U\not\in U$ adjoined regardless of
whether or not $U$ is already a monoid; the analogous statement holds for $S$
where the adjoined identity is denoted $1_S$. We note that it is not necessary
for $1_S$ and $1_U$ to be equal.

Apart from this introduction, this section has 6 subsections.  In
Subsection~\ref{subsection-assume}, we outline some basic operations that we
must be able to compute in order to apply the algorithms later in this section.
We then show how to perform these basic operations in the examples of
semigroups of transformations, partial permutations, matrices, partitions, and
in subsemigroups of a Rees $0$-matrix semigroup in
Subsection~\ref{subsection-comp-prereq}.  In
Subsection~\ref{subsection-components}, we describe how to calculate the
components of the action of a semigroup on a set, and how to use this to obtain
the Schreier generators from Proposition~\ref{prop-technical}(c).  In
Subsection~\ref{subsection-individual}, we describe a data structure for
individual Green's classes of $S$ and give algorithms showing how this data
structure can be used to compute various properties of these classes. In
Subsection~\ref{subsection-global}, we describe algorithms that can be used to
find global properties of $S$, such as its size, $\R$-classes, and so on.  In
Subsection~\ref{section-algorithms-regular}, we give details of how some of the
algorithms in Subsection~\ref{subsection-global} can be optimised when it is
known \textit{a priori} that $S$ is a regular or inverse semigroup. 

At several points in this section it is necessary to be able to determine the
strongly connected components (s.c.c.) of a directed graph. This can be
achieved using Tarjan's \cite{Tarjan1972aa} or Gabow's \cite{Gabow2000aa}
algorithms, for example; see also Sedgwick \cite{Sedgewick1983aa}.

In the algorithms in this section, ``$:=$'' indicates that we are assigning a
value (the right hand side of the expression) to a variable (the left hand
side), while ``$=$'' denotes a comparison of variables.  The symbol ``$\gets$''
is the replacement operator, used to indicated that the value of the variable
on the left hand side is replaced by the value on the right hand side. 


\subsection{Assumptions}\label{subsection-assume}

Suppose that $U$ is a finite regular semigroup and that $S$ is a subsemigroup
of $U$.  As discussed at the start of Section~\ref{section-special-cases}, we
suppose that we have a right action of $S$ on a set $(U)\lambda$ and a
homomorphism $\lambda: U \to (U)\lambda$ of this action and the action of $S$
on $U$ by right multiplication, where $\ker(\lambda) = \L ^ U$.  Furthermore,
for every $x\in U$ we assume that we have a faithful representation $\zeta$ of
the stabiliser $U_{L_x^U}$ and a function $\mu_x:\stab_U(L_x^U) \to
(U_{L_x^U})\zeta$ defined by
\begin{equation*}
  (u)\l{x} = (u|_{L_x^U})\zeta\qquad\text{for all}\qquad u\in U ^ 1;
\end{equation*} 
see \eqref{equation-demux}. We also assume that we have the left handed
analogues $\rho:U\to (U)\rho$ and $\nu_x:\stab_U(R_x^U) \to
({}_{R_x^U}U)\zeta'$ (where $\zeta'$ is any faithful representation of
${}_{R_x^U}U$) of $\lambda$ and $\mu_x$, respectively.  Recall that we write
$$S_x=(\stab_S(L_x^U))\mu_x\quad\text{and}\quad{}_xS = (\stab_S(R_x^U))\nu_x.$$

In order to apply the algorithms described in this section, it is necessary
that certain fundamental computations can be performed. 

\begin{ass} 
  We assume that we can compute the following:
  \begin{enumerate}[\rm (I)]
    \item\label{assumption-I} 
      the product $xy$;
    
    \item\label{assumption-II} 
      the value $(x)\lambda$;
      
    \item\label{lambda-inverse} 
      an element $\ov{s}\in U ^ 1$ such that $xs\ov{s} = x$
      whenever $(x)\lambda\sim (x)\lambda\cdot s = (xs)\lambda$;
    
    \item\label{lambda-perm}
       the value $(x's)\mu_x$, for some choice of $x'\in U$ such that $x x' x =
       x$, whenever $(x)\lambda = (s)\lambda$ and $(x)\rho = (s)\rho$;
  \end{enumerate}
  for all $x,y,s\in U$.  We also require the facility to perform the analogous
  computations involving the functions $\rho$ and the $\nu_x$ for all $x\in S$. 
\end{ass}

We will prove that $\ov{s}\in S ^ 1$ in Assumption \eqref{lambda-inverse} exists.
By the definition of $\lambda$, since $(x)\lambda \sim (xs)\lambda$, it follows
that $L_x^U\sim L_{xs}^U$ under the action of $S$ on $U/\L^U$ defined in
\eqref{equation-action-on-L}. Hence, by Proposition~\ref{prop-technical}(a),
there exists $\ov{s}\in S ^ 1$ such that 
$$L_{xs}^U\cdot \ov{s} = L_x^U\quad \text{and}\quad (s\ov{s})|_{L_x^U} =
\id_{L_x^U}$$
and so $xs\ov{s}=x$.  
Given the orbit graph of $(x)\lambda\cdot S ^ 1$, it is possible to compute
$\ov{s}$ by finding a path from $(xs)\lambda$ to $(x)\lambda$ and using
Algorithm~\ref{algorithm-trace-schreier}. However, this is often more expensive
than computing $\ov{s}\in U$ directly from $s$.  Some details of how to compute
Assumptions \eqref{assumption-I} to \eqref{lambda-perm} in the special cases given in
Section~\ref{section-special-cases} can be found in the next section. 

If $\ov{s}\in U ^ 1$ satisfies Assumption \eqref{lambda-inverse} for some $x\in S$,
then we note that $xs\ov{s} = x$ implies that $xs\ov{s}s = xs$ and so by
Lemma~\ref{lem-free}:
$$(\ov{s}s)|_{L_{xs}^{U}} =\id_{L_{xs}^U}.$$ 

Note that if $s\in \stab_S(R_x^U)$, then $(sx)\rho = (x)\rho$ and, by
Proposition~\ref{prop-main-1.5}(b), $sx\in L_x^S\cap R_x^U$. In particular,
$(sx)\lambda = (x)\lambda$, and so, by Assumption \eqref{lambda-perm}, we can
compute:
\begin{enumerate}[(I)]
  \addtocounter{enumi}{4}
     \item\label{lambda-conj}
       the value $((s)\nu_x)\Psi=(x'sx)\mu_x$ whenever $s\in \stab_S(R_x^U)$.
\end{enumerate}
We will refer to this as Assumption \eqref{lambda-conj}, even though it is not
an assumption.

It will follow from the comments in Subsection~\ref{subsection-comp-prereq}
that the algorithms described in this paper can be applied to any subsemigroup
of the full transformation monoid, the symmetric inverse monoid, the partition
monoid, the general linear monoid, or any subsemigroup of a regular Rees
$0$-matrix semigroup over a group.  However, we would like to stress that the
algorithms in this section apply to any subsemigroup of a finite regular
semigroup. In the worst case, the functions $\lambda: U\to U/\L^U$ and $\rho: U
\to U/\R^U$ defined by $(x)\lambda = L_x^U$ and $(x)\rho = R_x^U$, and the
natural mappings $\mu_x: \stab_S(L_x^U)\to S_{L_x^U}$ and $\nu_x:
\stab_S(R_x^U)\to {}_{R_x^U}S$ for every $x\in U$, fulfil the required
conditions,  although it might be that our algorithms are not very efficient in
this case.


\subsection{Computational prerequisites}\label{subsection-comp-prereq}

In this section, we describe how to perform the computations required in
Assumptions \eqref{assumption-I} to \eqref{lambda-perm} for semigroups of
transformations, partial permutations, partitions, and matrices, and for
subsemigroups of a Rees 0-matrix semigroup.


\subsubsection*{Transformations}

A transformation $x$ can be represented as a tuple $((1)x, \ldots, (n)x)$
where $n$ is the degree of $x$.

\begin{enumerate}[(I)]
  \item The composition $xy$ of transformations $x$ and $y$ is represented by
   $((1)xy, \ldots, (n)xy)$, which can be computed by simple
    substitution in linear time $O(n)$; 
  
  \item The value $(x)\lambda=\im(x)$ can be found by sorting and removing
    duplicates from  $((1)x, \ldots, (n)x)$ with complexity $O(n\log(n))$;
  
  \item If $x, s$ are such that $\im(x)$ and $\im(xs)$ have equal cardinality,
    then the transformation $\ov{s}$ defined by
    \begin{equation*}
      (i)\ov{s}=
      \begin{cases}
        (j)x & \text{if } i = (j)xs\in\im(xs) \\
        i    & \text{if } i\not\in \im(xs)
      \end{cases}
    \end{equation*}
    has the property that $x s\ov{s} = x$
    (finding $\ov{s}$ has complexity $O(n)$);

  \item If $x$ and $s$ are transformations such that $\ker(x) = \ker(s)$ and
    $\im(x) = \im(s)$ and $x'$ is any transformation such that $xx'x = x$,
    then, from Proposition~\ref{prop-transformation}(b), $(x's)\mu_x$ is just
    the restriction $(x's)|_{\im(x)}$ of $x's$ to $\im(x)$, which can be
    determined in $|\im(x)|$ steps from $x$ and $s$.

\end{enumerate}

The analogous calculations can be made in terms of the kernel of a transformation,
and the left actions of $S$; the details are omitted.


\subsubsection*{Partial permutations}

A partial permutation $x$ can be represented as a tuple $((1)x, \ldots, (m)x)$
where $m$ is the largest value where $x$ is defined, and $(i)x=0$ if $x$ is not
defined at $i$. The values required under Assumptions \eqref{assumption-I},
\eqref{assumption-II}, and \eqref{lambda-perm} can be computed for partial
permutations in the same way they were computed for transformations. 

Assumption \eqref{lambda-inverse} is described below:

\begin{enumerate}[(I)]
  \addtocounter{enumi}{2}
  \item if $x, s$ are such that $\im(x)$ and $\im(xs)$ have equal cardinality,
    then the partial permutation $\ov{s}=s^{-1}$ has the property 
    $xs\ov{s} = x$ (finding $s^{-1}$ has complexity $O(n)$).
\end{enumerate}


\subsubsection*{Matrices over finite fields}

The values required in Assumptions~\eqref{assumption-I} to \eqref{lambda-perm}
can be found for matrix semigroups in a similar way as they are found for
transformation and partial permutations, using elementary linear algebra; the
details will appear in \cite{Mitchell2015ab}. 


\subsubsection*{Rees $0$-matrix semigroups}

It should be clear from the definition of a Rees $0$-matrix semigroup, and by
Proposition~\ref{prop-rees-matrix}, how to compute the values in
Assumptions \eqref{assumption-I} and \eqref{assumption-II}.  Assumptions \eqref{lambda-inverse} and \eqref{lambda-perm} are described below:

\begin{enumerate}[(I)]
  \addtocounter{enumi}{2}

  \item 
    if $x = (a, b, i), s = (j, g, k) \in S$ are such that 
    $(x)\lambda = i \sim (s)\lambda = k$, then $p_{i,j}\not=0$.
    So, if $l\in I$ is such that $p_{k,l}\not=0$, then $\ov{s}=(l,
    p_{k,l}^{-1}g^{-1}p_{i,j}^{-1}, i)$ has the property that
    $xs\ov{s} = x$;
 
   \item 
     if $x=(i,g,j)\in S$, then there exist $k\in I$ and $l\in J$ such that
     $p_{j,k}, p_{l,i}\not=0$. One choice for $x'\in U$ is $(k,
     p_{j,k}^{-1}g^{-1}p_{l,i}^{-1}, l)$. 
     So, if $h\in G$ is arbitrary and $s= (i,h,j)$, then $(x)\lambda =
     (s)\lambda$ and $(x)\rho = (s)\rho$.  Hence  
     the action of $(x's)\mu_x$ on $m\in \bn$ (defined in
     Proposition~\ref{prop-rees-matrix}(b)) is given by 
     $$m\cdot (x's)\mu_x=(m) g^{-1}h.$$

\end{enumerate}


\subsubsection*{Partitions}

A partition $x\in P_n$ can be represented as a $2n$-tuple where the first $n$
entries correspond to the indices of the blocks containing $\{1,\ldots, n\}$ and
entries $n+1$ to $2n$ correspond to the indices of the blocks containing
$\{-1,\ldots, -n\}$. For example, the partition $x\in P_6$ shown in
Figure~\ref{fig-1-partition} is represented by $(1,2,3,4,5,5,1,6,4,6,5,5)$.

Given $x\in P_n$ represented as above, it is possible to compute $x^*\in P_n$
in $2n$ steps (linear complexity). This will be used in several of the
assumptions.

\begin{enumerate}[(I)]
  
  \item 
    The composition $xy$ of partitions $x$ and $y$ can be found using a
    variant of the classical Union-Find Algorithm; 
  
  \item 
    The value $x^*x$ can be found using (I);
  
  \item 
    If $x, s\in P_n$ are such that $(x)\lambda= x^*x\sim (xs)\lambda =
    s^*x^*xs$, then $x^*x$ and $s^*x^*xs$ have equal number of transverse
    blocks. A partition $\ov{s}$ with the property that $xs\ov{s} = x$ can then
    be found using a variant of the Union-Find Algorithm;

  \item 
    If $x, s\in P_n$ are such that $(x)\lambda = (s)\lambda$  and $(x)\rho =
    (s)\rho$, then, by Proposition~\ref{prop-partition}(b), $(x^*s)\mu_x$ is a
    permutation of the transverse blocks of $x$, which can be found in linear
    time (complexity $O(n)$). 
  
\end{enumerate}

In practice, it is possible to compute the values required by these assumptions
with somewhat better complexity than that given above. However, this is also
more complicated to describe and so we opted to describe the simpler methods.


\subsection{Components of the action}\label{subsection-components}

We start by describing a procedure for calculating $\alpha \cdot S ^
1=\set{\alpha\cdot s}{s\in S ^ 1}$ or $\Omega\cdot S ^ 1=\set{\alpha\cdot
s}{\alpha\in \Omega,\ s\in S ^ 1}$.  These are essentially the same as the
standard orbit algorithm for a group acting on a set (see, for example,
\cite[Section 4.1]{B.-Eick2004aa}), but without the assumption that $S$ is a
group.  An analogous algorithm can be used for left actions.  We will refer to
$\alpha \cdot S ^ 1$ as the \textit{component of the action} of $S$ containing
$\alpha$. 

In this subsection we present algorithms for computing: the components of an
action of a semigroup $S$; elements of $S$ that act on points in the component
in a specified way; generators for the stabiliser of a set.  Examples of how
these algorithms can applied can be found in Section~\ref{section-examples}.
The algorithms in this subsection and the next are somewhat similar to those
described in~\cite{Linton2002aa}.

Suppose that $X=\{x_1, \ldots, x_m\}$ is a generating set for a monoid $S$
acting on the right on a set $\Omega$. If $\alpha\in \Omega$ and $\alpha\cdot S
= \{\beta_1 = \alpha, \beta_2, \ldots, \beta_n\}$, then the \textit{orbit
graph} of $\alpha\cdot S$ is just the directed graph with vertices $\{1,\ldots,
n\}$ and an edge from $i$ to $g_{i,j}$ labelled with $j$ if $\beta_i\cdot
x_j=\beta_{g_{i,j}}$.  The orbit graph of $\Omega\cdot S$ is defined
analogously. A \textit{Schreier tree} for $\alpha\cdot S$ is just a spanning
tree for the orbit graph with root at $\beta_1$.  Schreier
trees for $\alpha \cdot S$ will be represented using a $2$-dimensional array  
\begin{equation*}
  \begin{array}{llll}
    v_{2}& \ldots &v_{n}\\
    w_{2}& \ldots &w_{n}
  \end{array}     
\end{equation*}
such that $\beta_{v_j}\cdot x_{w_j}=\beta_j$ and $v_j < j$ for all $j > 1$.

The orbit graph of $\Omega\cdot S ^ 1$ may not be connected and so it has a
forest of (not necessarily disjoint) Schreier trees rooted at some elements of
$\Omega$. To simplify things, we may suppose without loss of generality that
there is an $\alpha\in \Omega$ such that $\alpha\cdot S = \Omega\cdot S$. This
can be achieved by adding an artificial $\alpha$ to $\Omega$ (and perhaps some
further points) and defining the action of $S$ on these values so that $\alpha
\cdot S$ contains all of the roots of the Schreier forest for $\Omega \cdot S$.

Note that unlike the orbit graph of a component of a group acting on a set, the
orbit graph of a component of a semigroup acting on a set is, in general, not
strongly connected.  This makes several of the steps required below more
complicated than in the group case. 


\begin{algorithm}
  \caption{Compute a component of an action}
  \label{algorithm-component}
  \begin{algorithmic}[1]

    \item[\textbf{Input:}] $S:=\genset{X}$ where $X:=\{x_1,\ldots, x_m\}$, $S$
      acts on a set $\Omega$ on the right, and $\alpha\in \Omega$

    \item[\textbf{Output:}] $\alpha\cdot S ^ 1$, a Schreier tree for
      $\alpha\cdot S ^ 1$, and the orbit graph of $\alpha\cdot S ^ 1$

    \item $C:=\{\beta_1:=\alpha\}$, $n:=1$
      \Comment{$C$ will equal $\alpha\cdot S ^ 1$ by the end of the algorithm}

      \For{$\beta_i\in C$, $j\in \{1,\ldots, m\}$} 
      \Comment{loop over: values in $C$, the generators}

    \If{$\beta_i\cdot x_j\not\in C$} 

        \State $n:=n+1$, $\beta_n:=\beta_i\cdot x_j$, $C\gets
        C\cup \{\beta_n\}$; 
        \Comment{add $\beta_i\cdot x_j$ to $C$}
        \State $v_n:=i$, $w_n:=j$, $g_{i,j}:=n$ 
        \Comment{update the Schreier tree and orbit graph}

      \ElsIf{$\beta_i\cdot x_j=\beta_r\in C$}

      \State $g_{i,j}:=r$;\Comment{update the orbit graph}

      \EndIf

    \EndFor
    \State \Return $C$, $(v_2,\ldots, v_n, w_2, \ldots, w_n)$,
    $\set{g_{i,j}}{i\in \{1, \ldots, n\},\ j\in \{1,\ldots, m\}}$
  \end{algorithmic}
\end{algorithm}


The Schreier tree for $\alpha\cdot S ^ 1$ produced by
Algorithm~\ref{algorithm-component} can be used to obtain elements $u_i\in S ^
1$ such that $\beta_1\cdot u_i=\beta_i$ for all $i$ using
Algorithm~\ref{algorithm-trace-schreier}. The standard method for a group
acting on a set, such as the procedure {\sc U-Beta} in
\cite[p80]{B.-Eick2004aa}, cannot be used here, due to the non-existence of
inverses in semigroups.  


\begin{algorithm}
  \caption{Trace a Schreier tree}
  \label{algorithm-trace-schreier}
  \begin{algorithmic}[1]
  \item[\textbf{Input:}] a Schreier tree $(v_2,\ldots, v_n, w_2, \ldots, w_n)$ for
    $\alpha\cdot S ^ 1$, and $\beta_i\in \alpha\cdot S ^ 1$

    \item[\textbf{Output:}] $u\in S ^ 1$ such that $\alpha \cdot u=\beta_i$
 
    \item $u:=1_S$, $j:=i$
    
    \While{$j>1$}
      \State $u:=x_{w_j}u$ and $j:=v_j$
    \EndWhile
    \item \Return $u$
  \end{algorithmic} 
\end{algorithm}


The right action of $S$ on $\Omega$ induces an action of $S$ on $\P(\Omega)$.
In Algorithm~\ref{algorithm-schreier-generators-general},
Algorithms~\ref{algorithm-component} and~\ref{algorithm-trace-schreier} are
used to obtain the Schreier generators from Proposition~\ref{prop-technical}(c)
for the stabiliser $S_{\Sigma}$ of a subset $\Sigma$ of $\Omega$ under this
induced action.


\begin{algorithm}
  \caption{Compute Schreier generators for a stabiliser}
  \label{algorithm-schreier-generators-general}
  \begin{algorithmic}[1]
  \item[\textbf{Input:}] the component $\Sigma\cdot S ^ 1$ of $\Sigma\subseteq
    \Omega$ under the action of $S$ on $\P(\Omega)$, a Schreier tree and orbit
    graph for $\Sigma\cdot S ^ 1$

  \item[\textbf{Output:}] Schreier generators $Y$ for the stabiliser $S_{\Sigma}$
    
  \item $Y:=\{\id_{\Sigma}\}$

  \item find the s.c.c.~of $\Sigma$ in $\Sigma\cdot S ^ 1 := \{\Sigma_1:=\Sigma,
   \ldots, \Sigma_r\}$
    
    \For{$i\in \{1,\ldots, r\}$, $j\in \{1,\ldots, m\}$} 
    \Comment{loop over: $\Sigma\cdot S ^ 1$, the generators of $S ^ 1$}
      
    \State set $k:=g_{i,j}$
    \Comment{$g_{i,j}$ is from the orbit graph of $\Sigma\cdot S ^ 1$}
      
      \If{$\Sigma_k\sim \Sigma_1$}
      \Comment{$\Sigma_k$ is in the s.c.c.\ of $\Sigma_1$} 
        
        \State find $u_i, u_k\in S ^ 1$ such that $\Sigma_1\cdot u_i=\Sigma_i$ and
        $\Sigma_1\cdot u_k=\Sigma_k$
        \Comment{Algorithm~\ref{algorithm-trace-schreier}} 
        
        \State find $\ov{u_k}\in U$ such that $\Sigma_k\cdot \ov{u_k}=\Sigma_1$ and
        $(u_k\ov{u_k})|_{\Sigma_1}=\id_{\Sigma_1}$
        \Comment{Proposition~\ref{prop-technical}(a)}
        
        \State $Y\gets Y\cup \{(u_{i}x_{j}\ov{u_k})|_{\Sigma_1}\}$

      \EndIf 
    \EndFor
    \State \Return $Y$
  \end{algorithmic} 
\end{algorithm}

In Algorithm~\ref{algorithm-schreier-generators-special} we give
a more specialised version of
Algorithm~\ref{algorithm-schreier-generators-general}, which we require to find
$(\stab_S(L_x^U))\mu_x = S_x$ where $\mu_x$ is the function defined at the
start of this section. 


\begin{algorithm}
  \caption{Compute Schreier generators for $S_x$}
  \label{algorithm-schreier-generators-special}
  \begin{algorithmic}[1]
  \item[\textbf{Input:}] the component $(x)\lambda\cdot S ^ 1$ of $(x)\lambda$, a
    Schreier tree and orbit graph for $(x)\lambda\cdot S ^ 1$

  \item[\textbf{Output:}] Schreier generators $Y$ for the stabiliser $S_{x}$
  
  \item set $Y:=\varnothing$, $x_1 := x$
  
  \item find the s.c.c.~$\{(x_1)\lambda, \ldots,
    (x_r)\lambda\}$ of $(x)\lambda$ in $(x)\lambda\cdot S ^ 1$
    
    \For{$i\in \{1,\ldots, r\}$, $j\in \{1,\ldots, m\}$}
    \Comment{loop over: the s.c.c.~of $(x)\lambda$, the generators of $S$}
      
    \State set $k:=g_{i,j}$
    \Comment{$g_{i,j}$ is from the orbit graph of $(x)\lambda\cdot S ^ 1$}
      
      \If{$(x_k)\lambda\sim (x_1)\lambda$}
      \Comment{$(x_k)\lambda$ is in the s.c.c.\ of $(x_1)\lambda$} 
        \State find $u_i, u_k\in S ^ 1$ such that $(x_1)\lambda\cdot
        u_i=(x_i)\lambda$ and $(x_1)\lambda\cdot u_k=(x_k)\lambda$ 
        \Comment{Algorithm~\ref{algorithm-trace-schreier}} 
        
        \State find $\ov{u_k}\in U$ such that $xu_k\ov{u_k}=x$
        \Comment{Assumption \eqref{lambda-inverse}}
        
        \State $Y\gets Y\cup \{(x'u_{i}x_{j}\ov{u_k})\mu_x\}$
        \Comment{Assumption \eqref{lambda-perm}}

      \EndIf 
    \EndFor
    \State \Return $Y$
  \end{algorithmic} 
\end{algorithm}


Algorithms~\ref{algorithm-schreier-generators-general}
and~\ref{algorithm-schreier-generators-special} can also be used to find
generators for the stabiliser of any value in the component $\Sigma\cdot S ^ 1$ or
$(x)\lambda \cdot S ^ 1$, if we have a Schreier tree for the s.c.c.~rooted at
that value.  Algorithm~\ref{algorithm-component} returns a Schreier tree for
the entire component (possibly including several strongly connected components)
rooted at the first point in the component.  It is possible to find a Schreier
tree for any s.c.c.~rooted at any value in the component by finding a spanning
tree for the subgraph of the orbit graph the s.c.c.~induces.  Such a spanning
tree can be found in linear time using a depth first search algorithm, for
example.


\subsection{Individual Green's classes}
\label{subsection-individual}

\subsubsection*{Data structures}

By Corollary~\ref{cor-collect}, we can represent the $\R$-class $R^S_x$
of any element $x\in S$ as a quadruple consisting of:
\begin{itemize}
  
  \item the representative $x$;
  
  \item the s.c.c.~$\{(x)\lambda=\alpha_1, \ldots, \alpha_n\}$ of $(x)\lambda$
    under the action of $S$ (this can be found using
    Algorithms~\ref{algorithm-component} and any algorithm to find the strongly
    connected components of a digraph);
 
  \item a Schreier tree for~$\{\alpha_1, \ldots, \alpha_n\}$;
 
  \item the stabiliser group $S_{x}$ found using
    Algorithm~\ref{algorithm-schreier-generators-special}.

\end{itemize}
The $\L^S$-class of $x$ in $S$ can be represented using the analogous quadruple
using the s.c.c.~of $(x)\rho$, and the group ${}_{x}S$. The 
Green's $\H$- and $\D$-classes of $x$ in $S$ are represented using the
quadruple for $R_x^S$ and the quadruple for $L_x^S$.


\subsubsection*{Size of a Green's class}
Having the above data structures, it follows from Corollaries~\ref{cor-collect}(b)
and~\ref{cor-collect-analogue}(b) that $|R^S_x|=n \cdot |S_{x}|$ where $n$ is
the length of the s.c.c.~of $(x)\lambda$ in $(x)\lambda\cdot S ^ 1$. Similarly,
$|L^S_x|$ is the length of the s.c.c.~of $(x)\rho$ multiplied by $|{}_{x}S|$.
Suppose that $x'\in U$ is such that $xx'x=x$. Then, by
Proposition~\ref{prop-main-2}(b), $|H_x^S|=|S_{x}\cap ({}_{x}S)\Psi|$, where
$\Psi:{}_{x}S\to U_{x}$ is defined by $((s)\nu_x)\Psi = (x'sx)\mu_x$ for all
$s\in {}_{x}S$.  The group ${}_{x}S$ can be found using the analogue of
Algorithm~\ref{algorithm-schreier-generators-special}, and we can compute the
values $(x'sx)\mu_x$ for all $s\in {}_{x}S$ by Assumption \eqref{lambda-conj}.
The size of the $\D$-class $D^S_x$ is just
$$|D^S_x|=\frac{|L^S_x|\cdot |R^S_x|}{|H^S_x|},$$ and so $|D^S_x|$ can be found
using the values of $|L^S_x|$, $|R^S_x|$, and $|H^S_x|$. 


\subsubsection*{Elements of a Green's class}

Corollary~\ref{cor-elements}(a) states that
$$R_x^S=\set{xsu}{s\in \stab_S(L_x^U),\ u\in S ^ 1,\ (x)\lambda\cdot
u \sim (x)\lambda}.$$ 
If $s,t\in \stab_S(L_x^U)$ are such that $(s)\mu_x = (t)\mu_x$, then, by
Lemma~\ref{lem-free}, $xs=xt$.  It follows that if $M$ is any subset of
$\stab_S(L_x^U)$ such that $(M)\mu_x = S_x$ and $(s)\mu_x\not= (t)\mu_x$ for
all $s, t\in M$ such that $s\not = t$, then 
$$R_x^S=\set{xsu}{s\in M,\ u\in S ^ 1,\ (x)\lambda\cdot
u \sim (x)\lambda}.$$ 
If $Y=\{(y_1)\mu_x, \ldots, (y_k)\mu_x\}$ is a set of generators for $S_x$
(from Algorithm~\ref{algorithm-schreier-generators-special}), then every
element of $S_x$ is of the form $(s)\mu_x$ where $s\in \genset{y_1, \ldots,
y_k}$. Thus the set $M$ can be found by computing the elements of $S_x$, and
expressing each element as a product of the generators $Y$. An algorithm for
finding the elements of $R_x^S$ is given in
Algorithm~\ref{algorithm-R-class-elements}.


\begin{algorithm}
  \caption{Elements of an $\R$-class}
  \label{algorithm-R-class-elements}
  \begin{algorithmic}[1]
  \item[\textbf{Input:}] $x\in S$
    \item[\textbf{Output:}] the elements $Y$ of the $\R$-class $R_x^S$
    \item $Y:=\varnothing$
    \item find the s.c.c.~$\{(x)\lambda=\alpha_1,\ldots, \alpha_n\}$
      of $(x)\lambda$, and a Schreier tree for this s.c.c.
    \Comment{Algorithm~\ref{algorithm-component}}
    \item find the group $S_x$ 
      \Comment{Algorithm~\ref{algorithm-schreier-generators-special}}
      \For{$i\in \{1,\ldots, n\}$, $(s)\mu_x\in S_x$}
      \Comment{Loop over: the s.c.c., elements of the
      group}
      \State find $u_{i}\in S ^ 1$ such that $\alpha_1\cdot u_i=\alpha_i$
      \Comment{Algorithm~\ref{algorithm-trace-schreier}} 
      \State $Y\gets Y\cup \{xsu_i\}$
    \EndFor 
    \State \Return $Y$
  \end{algorithmic}
\end{algorithm}


The elements of an $\L$-class can be found using an analogous algorithm. By the
proof of Proposition~\ref{prop-main-2}(b), $\phi_1:S_{x}\cap
({}_{x}S)\Psi\to H_x^S$ defined by $((s)\mu_x)\phi_1= xs$ is a
bijection. It follows that if we can compute the intersection of groups
$S_{x}\cap ({}_{x}S)\Psi$, then we can obtain the elements of
$H_x^S$.  As mentioned above, $({}_{x}S)\Psi$ can be determined using
Algorithm~\ref{algorithm-schreier-generators-special} and by Assumption
\eqref{lambda-conj}.

The elements of a $\D$-class are slightly more complicated to compute.
In Algorithm~\ref{algorithm-R-classes-D-class} we show how to find the
$\R$-classes in a given $\D$-class of $S$ and this combined with
Algorithm~\ref{algorithm-R-class-elements} gives a method for finding the
elements of a $\D$-class. 


\subsubsection*{Classes within classes}

By Proposition~\ref{prop-D-class-R-class-reps}, if
$x\in S$, $x'\in U$ is such that $xx'x=x$, and:
\begin{itemize}
  \item $\mathcal{C}$ is a subset of $\stab_{U}(L_x^U)$ such that 
    $\set{(c)\l{x}}{c\in \mathcal{C}}$
    is a left transversal of $S_{x}\cap({}_{x}S)\Psi$ in $({}_xS)\Psi$ where
    $\Psi:{}_{x}S\to U_{x}$, defined by $((s)\r{x})\Psi= (x'sx)\l{x}$, is the
    embedding from Proposition~\ref{prop-main-2}(a);
  \item $\set{u_i\cdot(x)\rho}{1\leq i\leq m}$ is the s.c.c.\ of 
    $(x)\rho$ under the left action of $S$, where $u_i\in S ^ 1$ for all $i$, 
\end{itemize}
then $\set{u_ixc}{c\in \mathcal{C},\ 1\leq i\leq m}$ is a set of $\H^S$-class
representatives for $L_x^S$, and hence a set of $\R^S$-class representatives
for $D_x^S$. Using this result in Algorithm~\ref{algorithm-R-classes-D-class}
we show how to find the $\R^S$-classes of a $\D^S$-class.  Since
$(u_ixc)\lambda = (x)\lambda$, it follows that $S_{u_ixc}= S_x$  for all $i$
and all $c\in \mathcal{C}$.  Therefore, the data structures for $R_x^S$ and
$R_{u_ixc}^S$ are identical except for the representatives.

\begin{algorithm}
  \caption{$\R$-classes in a $\D$-class}
  \label{algorithm-R-classes-D-class}
  \begin{algorithmic}[1]
    \item[\textbf{Input:}] $x\in S$    

    \item[\textbf{Output:}] $\R$-class representatives $\mathfrak{R}$ of the
      $\D$-class $D_x^S$

    \item $\mathfrak{R}:=\varnothing$

    \item find the s.c.c.s~of $(x)\lambda$ and $(x)\rho$ and their Schreier
      trees
      \Comment{Algorithm~\ref{algorithm-component}}

    \State find $S_x\cap ({}_xS)\Psi$
      \Comment{Algorithm~\ref{algorithm-schreier-generators-special} and
      Assumption~\eqref{lambda-conj}} 

    \item find $\mathcal{C}\subseteq \stab_{U}(L_x^U)$ such that
      $\set{(c)\mu_x}{c\in \mathcal{C}}$ is a left transversal of $S_x\cap
      ({}_xS)\Psi$ in $({}_xS)\Psi$

    \item[] \Comment{Proposition~\ref{prop-D-class-R-class-reps}(a)}
    \For{$(y)\rho$ in the s.c.c.~of $(x)\rho$}
      \State find $u_{i}\in S$ such that $u_i\cdot (x)\rho=(y)\rho$
      \Comment{Algorithm~\ref{algorithm-trace-schreier}}
      \For{$c\in \mathcal{C}$}
        \State $\mathfrak{R}\gets \mathfrak{R}\cup \{u_ixc\}$
      \EndFor
    \EndFor
    \State \Return $\mathfrak{R}$
  \end{algorithmic}
\end{algorithm}

From Proposition~\ref{prop-D-class-L-class-reps}, algorithms analogous to
Algorithm~\ref{algorithm-R-classes-D-class}, can be used to find the
$\L^S$-classes in a $\D^S$-class, the $\H^S$-classes in an $\R^S$-class, or the
$\H^S$-classes in an $\L^S$-class.


\subsubsection*{Testing membership}

Using Corollary~\ref{cor-R-membership} and~\ref{prop-D-membership},
in Algorithms~\ref{algorithm-R-class-membership}
and~\ref{algorithm-D-class-membership} we show how the data structures described
at the start of the section can be used to test membership in an $\R$- or
$\D$-class. 

Using Proposition~\ref{prop-L-membership}, an algorithm analogous to
Algorithm~\ref{algorithm-R-class-membership} (for $\R$-classes) can be used to
test membership in an $\L$-class. Testing membership in an $\H$-class can then
be accomplished by testing membership in the corresponding $\L$- and
$\R$-classes. 


\begin{algorithm}
  \caption{Test membership in an $\R$-class}
  \label{algorithm-R-class-membership}
  \begin{algorithmic}[1]
    \item[\textbf{Input:}] $y\in U$ and the data structure of an $\R$-class $R^S_x$
    \item[\textbf{Output:}] true or false
      \If{$(x)\rho=(y)\rho$ and $(y)\lambda\sim (x)\lambda$} 
      \State find $u\in S ^ 1$ such that $(x)\lambda\cdot u=(y)\lambda$
      \Comment{Algorithm~\ref{algorithm-trace-schreier}}
      \State find $\ov{u}\in U ^ 1$ such that
      $(y)\lambda\cdot \ov{u}=(x)\lambda$ and $xu\ov{u}=x$ \Comment{Assumption 
      \eqref{lambda-inverse}}
      \State \Return $(x'y\ov{u})\mu_x\in S_x$ \Comment{Assumption
      \eqref{lambda-perm}}
    \Else
      \State \Return false
    \EndIf
  \end{algorithmic}
\end{algorithm}


\begin{algorithm}
  \caption{Test membership in a $\D$-class}
  \label{algorithm-D-class-membership}
  \begin{algorithmic}[1]
    \item[\textbf{Input:}] $y\in U$ and the data structure for the $\D$-class of
      $x\in S$
    \item[\textbf{Output:}] true or false
      \If{$(x)\lambda\sim (y)\lambda$ and $(x)\rho\sim (y)\rho$} 
      \State find $u_1, u_2\in S ^ 1$ such that $(x)\lambda\cdot{u_1}=(y)\lambda$
        and ${u_2}\cdot (x)\rho=(y)\rho$
      \Comment{Algorithm~\ref{algorithm-trace-schreier}}
      \State find $\ov{u_1}\in U ^ 1$ such that $(y)\lambda\cdot\ov{u_1}=(x)\lambda$ and 
        $xu_1\ov{u_1}=x$ 
        \Comment{Assumption \eqref{lambda-inverse}}
        \State find $\ov{u_2}\in U ^ 1$ such that $\ov{u_2}\cdot (y)\rho=(x)\rho$, and
        $\ov{u_2}u_2x=x$ 
        \Comment{the analogue of Assumption \eqref{lambda-inverse}}
        \State find $S_x\cap ({}_xS)\Psi$
        \Comment{Algorithm~\ref{algorithm-schreier-generators-special} and
        Assumption~\eqref{lambda-conj}} 
        \State find $\mathcal{C}\subseteq \stab_U(L_x^U)$ such that
        $\set{(c)\mu_x}{c\in \mathcal{C}}$ is a left transversal of $S_x\cap
        ({}_xS)\Psi$ in $({}_xS)\Psi$\\
          \Comment{Proposition~\ref{prop-D-class-R-class-reps}(a)}
        \For{$c\in \mathcal{C}$}
      \If{$(x'\ov{u_2}y\ov{u_1}c)\mu_x$ in $({}_xS)\Psi$}
          \State \Return true
        \EndIf
      \EndFor
    \EndIf
    \State \Return false
  \end{algorithmic}
\end{algorithm}


\subsubsection*{Regularity and idempotents}

An $\R$-class $R$ of $S$ is regular if and only if there is $x\in R$ such that
$H_x^U$ is a group. Since $y\in H_x^U$ if and only if $(x)\lambda=(y)\lambda$
and $(x)\rho=(y)\rho$, it is possible to verify that an $\R$-class is regular
only by considering the value $(x)\lambda$ and the s.c.c.~of $(x)\rho$. 
A similar approach can be used to to compute the
idempotents in an $\R$-class. 

How we test if the $\H$-class in $U$ corresponding to $(x)\lambda$ and $(y)\rho$
is a group, depends on the context. For example:
\begin{description}
  \item[Transformation semigroups:] in the full transformation monoid,
    $(x)\lambda$ and $(y)\rho$ are an image set and kernel of a transformation,
    respectively. In this case, the $\H$-class corresponding to $(x)\lambda$
    and $(y)\rho$ is a group if and only if $(x)\lambda$ contains precisely one
    element in every kernel class of $(x)\rho$. If $(x)\lambda$ and $(y)\rho$
    satisfy this property, then it is relatively straightforward to compute an
    idempotent with kernel $(y)\rho$ and image $(x)\lambda$;

  \item[Partial permutations:] in the symmetric inverse monoid, $(x)\lambda$ and
    $(y)\rho$ are the image set and domain of a partial permutation,
    respectively. In this case, the $\H$-class corresponding to $(x)\lambda$ and
    $(y)\rho$ is a group if and only if $(x)\lambda=(y)\rho$. Given such
    $(x)\lambda=(y)\rho$, the partial identity function with domain
    $(x)\lambda$ is the idempotent in the $\H$-class;

  \item[Matrix semigroups:] In this case, there is no simple criteria for the
    $\H$-class of $(x)\lambda$ and $(y)\rho$ to be a group. 

  \item[Rees matrix semigroups:] in a Rees $0$-matrix semigroup $\M^0[G; I, J;
    P]$, $(x)\lambda=j\in J$ and $(y)\rho=k\in I$. In this case, the $\H$-class
    corresponding to $(x)\lambda$ and $(y)\rho$ is a group if and only if
    $p_{j,k}\not=0$. The idempotent in the $\H$-class is then $(k,p_{j,k}^{-1},
    j)$;

  \item[Partitions:] in the partition monoid, $(x)\lambda=x^*x$ and
    $(y)\rho=yy^*$. In this case, the $\H$-class $L_{x^*x}\cap R_{yy^*}$
    contains an idempotent if and only if the number of transverse blocks of
    $yy^*x^*x$ equals the number of transverse blocks of $x^*x$. If the
    previous condition is satisfied, then the idempotent contained in
    $L_{x^*x}\cap R_{yy^*}$ is $yy^*x^*x$. 

\end{description}


\begin{algorithm}
  \caption{Regularity of an $\R$-class}
  \label{algorithm-R-class-regular}
  \begin{algorithmic}[1]
    \item[\textbf{Input:}] a representative $x\in S$ of an $\R$-class of $S$
    \item[\textbf{Output:}] true or false
    \item find the s.c.c.~of $(x)\rho$ in $S ^ 1\cdot (x)\rho$
     \Comment{analogue of Algorithm~\ref{algorithm-component}}
     \For{$(y)\rho$ in the s.c.c.~of $(x)\rho$}
      \If{the $\H$-class in $U$ corresponding to $(x)\lambda$ and
        $(y)\rho$ is a group}
          \State \Return true
      \EndIf
    \EndFor
    \State \Return false
  \end{algorithmic}
\end{algorithm}

Algorithms analogous to Algorithms~\ref{algorithm-R-class-regular}
and~\ref{algorithm-R-class-idempotents} can be used to test regularity and find
the idempotents in an $\L$-class.  A $\D$-class $D$ in $S$ is regular if and
only if any (equivalently, every) $\R$-class in $D$ is regular. Hence
Algorithms~\ref{algorithm-R-classes-D-class}
and~\ref{algorithm-R-class-regular} and can be used to verify if a $\D$-class is
regular or not. It is possible to calculate the idempotents in a $\D$-class $D$
by creating the $\R$-class representatives using
Algorithm~\ref{algorithm-R-classes-D-class} and finding the idempotents in
every $\R$-class of $D$ using Algorithm~\ref{algorithm-R-class-idempotents}. 


\begin{algorithm}
  \caption{Idempotents in an $\R$-class}
  \label{algorithm-R-class-idempotents}
  \begin{algorithmic}[1]
    \item[\textbf{Input:}] $x\in S$
    \item[\textbf{Output:}] the idempotents $E$ of $R^S_x$
    \item $E:=\varnothing$
    \item find the s.c.c.~of $(x)\rho$ in $S ^ 1 \cdot (x)\rho$
      \Comment{analogue of Algorithm~\ref{algorithm-component}}
     \For{$(y)\rho$ in the s.c.c.~of $(x)\rho$}
      \If{the $\H$-class $H$ in $U$ corresponding to $(x)\lambda$ and
        $(y)\rho$ is a group}
        \State find the identity $e$ of $H$
        \State $E\gets E\cup\{e\}$
      \EndIf
    \EndFor
    \State \Return $E$
  \end{algorithmic}
\end{algorithm}


\subsection{The global structure of a semigroup}\label{subsection-global}
In this section, we provide algorithms for determining the structure of an
entire semigroup, rather than just its individual Green's classes as in the previous
sections.  Unlike the previous two subsections, the algorithms described in this
section differ significantly from the analogous procedures described in
\cite{Linton2002aa}. 

In order to streamline the presentation of some algorithms in this section, we
will perform them on $U ^ 1$ and $S ^ 1$, instead of $U$ and $S$.  Of course,
if we want to perform a calculation on $U$ or $S$ instead of $U ^ 1$ or $S ^
1$, then we can simply perform whatever calculation we require in $U^1$ or $S ^
1$, then return the answer for $U$ or $S$.


\subsubsection*{The main algorithm}

Algorithm~\ref{algorithm-enumerate} is the main algorithm for computing the
size, the Green's structure, testing membership, and so on in $S$. This
algorithm could be replaced by an analogous algorithm which enumerates
$\L$-classes, rather than $\R$-classes. In the case of transformation
semigroups, in many test cases, the algorithm for $\R$-classes has better
performance than the analogous algorithm for $\L$-classes.  This is one reason
for presenting this algorithm and not the other.  

Since Green's $\R$-relation is a left congruence, it follows that
representatives of the $\R$-classes of $S$ can be obtained from the identity by
left multiplying by the generators. The principal purpose of
Algorithm~\ref{algorithm-enumerate} is to determine the action of $S$ on its
$\R$-class representatives by left multiplication. In particular, we enumerate
$\R$-class representatives of $S$, which we will denote by $\mathfrak{R}$. Since
we are calculating an action of $S$, we may also discuss the Schreier tree and
orbit graph of this action, as we did in Algorithm~\ref{algorithm-component}.
At the same time as finding the action of $S$ on its $\R$-class representatives,
we also calculate $(S)\rho$. Since $(x)\rho=(y)\rho$ if and only if $x\R^U y$,
it follows that 
$$(S)\rho = (\mathfrak{R})\rho = \set{(x)\rho}{x\in \mathfrak{R}}.$$ 
In Algorithm~\ref{algorithm-enumerate}, we find an addition parameter, which is
denoted by $K_i$. This parameter is used later in
Algorithm~\ref{algorithm-factorization}, which allows us to factorise elements
of a semigroup over its generators.

If the subsemigroup $S$ we are trying to compute is $\R$-trivial, then
Algorithm~\ref{algorithm-enumerate} simply exhaustively enumerates the elements
of $S$.  In such a case, unfortunately, Algorithm~\ref{algorithm-enumerate} has
poorer performance than a well-implemented exhaustive algorithm, since it
contains some superfluous steps.  For example, the calculations of $(S)\lambda$
and the groups $S_x$ are unnecessary steps if $S$ is $\R$-trivial. On a more
positive note, in some cases, it is possible to detect if a semigroup is
$\R$-trivial with relatively little effort. For example, if $S$ is a
transformation semigroup of degree $n$, then it is shown in \cite{Simon1975aa}
that $S$ is $\R$-trivial if and only if its action on the points in
$\{1,\ldots, n\}$ is acyclic. In other words, it is possible to check whether a
transformation semigroup is $\R$-trivial in polynomial time $O(n^3)$.  At least
in this case, it would then be possible to use the $\L$-class version of
Algorithm~\ref{algorithm-enumerate} instead of the $\R$-class version, or
indeed, an exhaustive algorithm such as that in \cite{Pin2009aa}.


\begin{algorithm}
  \caption{Enumerate the $\R$-classes of a semigroup}
  \label{algorithm-enumerate} 
  \begin{algorithmic}[1]
    \item[\textbf{Input:}] $S ^ 1:=\genset{X}$ where $X:=\{x_1,\ldots, x_m\}$

    \item[\textbf{Output:}] data structures for the $\R$-classes with
      representatives $\mathfrak{R}$ in $S ^ 1$, the set $(S ^ 1)\rho$, and Schreier
      trees and orbit graphs for $\mathfrak{R}$ and $(S ^ 1)\rho$

    \item set $\mathfrak{R}:=\{y_1:=1_S\}$, $M:=1$
      \Comment{initialise the list of $\R$-class reps}

    \item set $(S ^ 1)\rho:=\{\beta_1:=(1_S)\rho\}$, $N:=1$
      \Comment{initialise $(S)\rho$}

    \item find $(S ^ 1)\lambda = (1_S)\lambda\cdot S$
      \Comment{Algorithm~\ref{algorithm-component}}

    \item find representatives $(z_1)\lambda, \ldots, (z_r)\lambda$ of the
      s.c.c.s of $(S ^ 1)\lambda$ 
      \Comment{standard graph theory algorithm}
    
    \item find the groups $S_{z_i}$  for $i\in\{1,\ldots, r\}$
      \Comment{Algorithm~\ref{algorithm-schreier-generators-special}}

    \For{$y_i\in{\mathfrak{R}}$, $j\in \{1,\ldots, m\}$}
    \Comment{loop over: existing $\R$-representatives, generators of $S ^ 1$}

      \State find $n\in \{1,\ldots, r\}$ such that $(z_n)\lambda\sim
      (x_jy_i)\lambda$ 

      \State find $u\in S ^ 1$ such that $(z_n)\lambda\cdot u=(x_jy_i)\lambda$
        \Comment{Algorithm~\ref{algorithm-trace-schreier}}

      \State find $\ov{u}\in U ^ 1$ such that $(x_jy_i)\lambda\cdot
      \ov{u}=(z_n)\lambda$ and $z_nu\ov{u} = z_n$
        \Comment{Assumption \eqref{lambda-inverse}}

      \If{$(x_jy_i)\rho\not\in (S ^ 1)\rho$}
        \Comment{$x_jy_i\ov{u}$ is a new $\R$-representative}

        \State $N:=N+1$, $\beta_N:=(x_jy_i\ov{u})\rho=x_j\cdot (y_i)\rho$,
          $(S ^ 1)\rho\gets (S ^ 1)\rho\cup \{\beta_N\}$
          \Comment{add $(x_jy_i)\rho$ to $(S ^ 1)\rho$}

        \State $v_N:=i$, $w_N:=j$, $g_{i,j}:=N$
          \Comment{update the Schreier tree and orbit graph of $(S ^ 1)\rho$}

        \ElsIf{$(x_jy_i)\rho=\beta_k\in (S ^ 1)\rho$}

          \State $g_{l,j}:=k$ where $l$ is such that $(y_i)\rho=\beta_l$;
            \Comment{update the orbit graph of $(S ^ 1)\rho$}

          \For{$y_l\in \mathfrak{R}$ with $(y_l)\rho=(x_jy_i)\rho$ and
            $(y_l)\lambda=(z_n)\lambda$}

            \If{$(y_l'x_jy_i\ov{u})\mu_{z_n}\in
            S_{z_n}=S_{y_l}$}\Comment{$x_jy_i\R^Sy_l$}
              \State $G_{i,j}:=l$
                \Comment{update the orbit graph of $\mathfrak{R}$}

              \State go to line 6

            \EndIf
          \EndFor
        \EndIf

        \State $M:=M+1$, $y_M:=x_jy_i\ov{u}$,
        $\mathfrak{R}\gets\mathfrak{R}\cup\{y_M\}$
          \Comment{add $x_jy_i\ov{u}$ to $\mathfrak{R}$}

        \State$V_{M}:=i$, $W_M:=j$, $K_M:=$ the index of $(x_jy_i)\lambda$ in
        $(S ^ 1)\lambda$ 
          \Comment{update the Schreier tree of $\mathfrak{R}$}

        \State $G_{i, j}:=M$
        \Comment{update the orbit graph of $\mathfrak{R}$}
    \EndFor
    \State \Return the following:
      \begin{itemize}
        \item the $\R$-representatives: $\mathfrak{R}$ 
        \item the Schreier tree for $\mathfrak{R}$: $(V_2,\ldots, V_{M}, W_2,
          \ldots, W_M)$
        \item the orbit graph of $\mathfrak{R}$: $\set{G_{i,j}}{i=1,\ldots, M,\
          j = 1, \ldots, m}$
        \item the set $(S ^ 1)\rho$
        \item the Schreier tree for $(S ^ 1)\rho$: $(v_2, \ldots, v_N, w_2, \ldots,
          w_N)$
        \item the orbit graph of $(S ^ 1)\rho$: $\set{g_{i,j}}{i=1,\ldots, N,\ j =
          1,\ldots, m}$
        \item the parameters: $(K_1, \ldots, K_M)$
      \end{itemize}
  \end{algorithmic}
\end{algorithm}


From Algorithm~\ref{algorithm-enumerate}, it is routine to
calculate the size of $S$ as:
\begin{equation}\label{eq-size}
  |S|=\sum_{i=1}^{r} |S_{z_i}| \cdot |\text{s.c.c.~of}\ (z_i)\lambda| \cdot
  |\set{y\in \mathfrak{R}}{(y)\lambda=(z_i)\lambda}| - 1.
\end{equation}
The elements of $S$ are just the union of the sets of elements of the
$\R$-classes of $S$, which can be found using
Algorithm~\ref{algorithm-R-class-elements}.

The idempotents of $S$ can be found by determining the idempotents in the
$\R$-classes of $S$ using Algorithm~\ref{algorithm-R-class-idempotents}.
Similarly, it is possible to test if $S$ is regular, by checking that every
$\R$-class or $\D$-class is regular using
Algorithm~\ref{algorithm-R-class-regular}.  Note that the existence of an
idempotent in an $\R^S$-class depends only on the values of $\lambda$ and
$\rho$ of the elements of that class.  Hence if there are at least two distinct
$\R$-class representatives $x$ and $y$ in $S$ such that $(x)\lambda=(y)\lambda$
and $(x)\rho=(y)\rho$, then $S$ is not regular, since the disjoint $\R$-classes
$R^S_x$ and  $R^S_y$ cannot contain the same idempotents.

The $\D$-classes of $S ^ 1$ are in 1-1 correspondence with the strongly connected
components of the orbit graph of $\mathfrak{R}$, which is obtained in
Algorithm~\ref{algorithm-enumerate}. Since we also find $(S ^ 1)\rho$ in
Algorithm~\ref{algorithm-enumerate}, it is possible to find the $\D$-classes of
$S ^ 1$, and their data structures, by finding the strongly connected
components of the orbit graph of $\mathfrak{R}$.  The $\L$- and $\H$-classes of
$S$ and $S ^ 1$ can then be found from the $\D$-classes using the analogues of
Algorithm~\ref{algorithm-R-classes-D-class} (for finding the $\R$-classes in a
$\D$-class).

In Algorithms~\ref{algorithm-membership},~\ref{algorithm-factorization},
~\ref{algorithm-D-class-order}, and~\ref{algorithm-closure}, we will assume
that Algorithm~\ref{algorithm-enumerate} has been performed and that the result
has been stored somehow. So, for example, if we wanted to check membership of
several elements in the semigroup $U$ in its subsemigroup $S$, we perform
Algorithm~\ref{algorithm-enumerate} only once. 


\subsubsection*{Testing membership in a semigroup}

In Algorithm~\ref{algorithm-membership}, we give a procedure for testing 
if an element of $U$ belongs to $S$. This can be easily modified to
return the $\R$-class representative in $S$ of an arbitrary element of
$U$, if it exists (simply return the element $x$ in line 12). We require such an
algorithm when it comes to factorising elements of $S$ over the generators in
Algorithm~\ref{algorithm-factorization}.


\begin{algorithm}
  \caption{Test membership in a semigroup}
  \label{algorithm-membership} 
  \begin{algorithmic}[1]
    \item[\textbf{Input:}] $S ^ 1:=\genset{X}$ where $X:=\{x_1,\ldots, x_m\}$
      and $y\in U ^ 1$

    \item[\textbf{Output:}] true or false
    
    \item let $(S ^ 1)\lambda = (1_S)\lambda \cdot S$ and $(S ^ 1)\rho = S\cdot
      (1_S)\rho$
      \Comment{Algorithm~\ref{algorithm-component}}

    \If{$(y)\lambda\not\in (S ^ 1)\lambda$ or $(y)\rho\not\in (S ^ 1)\rho$}
      \Comment{$y\not\in S ^ 1$}
      \State \Return false
    \EndIf
    
    \item let $(z_1)\lambda, \ldots, (z_r)\lambda$ be representatives of the
      s.c.c.s of $(S ^ 1)\lambda$
    
    \item let $\mathfrak{R}$ denote the $\R$-class representatives of $S ^ 1$
      \Comment{Algorithm~\ref{algorithm-enumerate}} 

    \item find $n\in \{1,\ldots, r\}$ such that $(z_n)\lambda\sim (y)\lambda$ 

    \item find $u\in S ^ 1$ such that $(z_n)\lambda\cdot u=(y)\lambda$
      \Comment{Algorithm~\ref{algorithm-trace-schreier}}

    \item find $\ov{u}\in U ^ 1$ such that $(y)\lambda\cdot \ov{u}=(z_n)\lambda$
      and $yu\ov{u} = y$
      \Comment{Assumption \eqref{lambda-inverse}}
    
    \For{$x\in \mathfrak{R}$ such that $(x)\lambda=(z_n)\lambda$ and 
      $(x)\rho=(y)\rho$}
      \If{$(x'y\ov{u})\mu_x\in S_{z_n} = S_x$}\Comment{$x\R^{S ^ 1}y$}
        \State \Return true
      \EndIf
    \EndFor
    \State \Return false
  \end{algorithmic}
\end{algorithm}


\subsubsection*{Factorising elements over the generators}

In this part of the paper, we describe how to factorise an element of $S$ as a
product of the generators of $S$ using the output of
Algorithm~\ref{algorithm-enumerate}. 

Corollary~\ref{cor-elements}(a) says that 
\begin{equation*}
  R_x^S=\set{xsu}{s\in \stab_S(L_x^U),\ u\in S ^ 1,\ (xu)\lambda\sim\lambda(x)}.
\end{equation*}
Suppose that $y\in S$ is arbitrary and that $\mathfrak{R}$ denotes a set of
$\R$-class representatives of $S$. 
If we can write $y = xsu$, where $x\in \mathfrak{R}$ such that $x\R^S y$, $s\in
\stab_S(L_x^U)$, and $u\in S ^ 1$ such that $(xu)\lambda = (y)\lambda$, then it
suffices to factorise each of $x$, $s$, and $u$ individually.

The element $x \in \mathfrak{R}$ can be found using the
alternate version of Algorithm~\ref{algorithm-membership} mentioned above.
Algorithm~\ref{algorithm-trace-schreier}, applied to $(S)\lambda$, can be used
to find $u$ such that $(xu)\lambda = (y)\lambda$.

Suppose that $\ov{u}\in U ^ 1$ is such that $(y)\lambda\cdot \ov{u} = (x)\lambda$ and
$xu\ov{u} = x$. Since $(xs)\lambda = (x)\lambda$, it follows from
Lemma~\ref{lem-free} that $xsu\ov{u} = xs$. 
If $x'\in U$ is any element such that $xx'x=x$, then 
from Proposition~\ref{prop-main-1}(a), 
$x$ is the identity of the group $L_x^U\cap R_x^S$ under multiplication $*$
defined by $a*b=ax'b$. In particular, $xx'y\ov{u} = y\ov{u}$ since $y\ov{u}\in
L_x^U\cap R_x^S$.
This implies that 
$$xx'y\ov{u}=y\ov{u}=xsu\ov{u}=xs$$
and so, by Lemma~\ref{lem-free}, $(x'y\ov{u})\mu_x = (s)\mu_x$.
Since $(y\ov{u})\lambda = (x)\lambda$ and $(y\ov{u})\rho = (x)\rho$, by
Assumption \eqref{lambda-perm}, 
we can compute $(x'y\ov{u})\mu_x$. Thus for any $y\in S$ we can determine
$x,s,u\in S$ (as above) such that $y=xsu$.

We still require a factorisation of $x,s,u\in S$ over the generators of $S$
(here we suppose that $u\not= 1_S$).
Tracing the Schreier tree of $\mathfrak{R}$ returned by
Algorithm~\ref{algorithm-enumerate} and using the parameter $K_i$, we can
factorise $x\in \mathfrak{R}$; more details are in
Algorithm~\ref{algorithm-factorization}.  We may factorise $u$ over the
generators of $S$ using Algorithm~\ref{algorithm-trace-schreier} applied to the
s.c.c.~of $(x)\lambda$ in $(S)\lambda$.

Any algorithm for factorising elements of a group can be used to
factorise $(s)\mu_x$ as a product of the generators of $S_{x}$ given by
Algorithm~\ref{algorithm-schreier-generators-special}. For example, in a group
with a faithful action on some set, such a factorisation can be obtained from a
stabiliser chain, produced using the Schreier-Sims algorithm.
The generators of $S_{x}$ are of the form $(u_ix_k\ov{u_j})\mu_x$, where $u_i$ is
obtained by tracing the Schreier tree of $(S)\lambda$, $x_k$ is one of the
generators of $S$, and $\ov{u_j}$ is obtained using Assumption
\eqref{lambda-inverse}. Therefore to factorise $s$ over the generators of $S$,
it suffices to factorise those $\ov{u_j}$ where $\ov{u_j} \not= 1_S$ over the
generators of $S$. 

Suppose that $\{\alpha_1, \ldots, \alpha_K\}$ is a s.c.c.~of $(S)\lambda$ for
some $K\in \N$. Then from the orbit graph of $(S)\lambda$ we can find 
$$a_2, \ldots, a_{K}, b_2, \ldots, b_K$$
such that 
$$\alpha_j\cdot x_{a_j}=\alpha_{b_j}$$
and $b_j < j$ for all $j > 1$.
In other words, $(a_2, \ldots, a_{K}, b_2, \ldots, b_K)$ describes a spanning
tree, rooted at $\alpha_1$, for the component of the orbit graph of
$(S)\lambda$, whose edges have the opposite orientation to those in the usual
Schreier tree.  We refer to  $(a_2, \ldots, a_{K}, b_2, \ldots, b_K)$ as a
\textit{reverse Schreier tree}.

It follows that for any $i\in \{1,\ldots, K\}$ we can use an analogue of
Algorithm~\ref{algorithm-trace-schreier} to obtain $v\in S ^ 1$ such that
$\alpha_j\cdot v=\alpha_1$.  However, if $u\in S ^ 1$ is obtained using
Algorithm~\ref{algorithm-trace-schreier} such that $\alpha_1\cdot u=\alpha_j$
and $z\in S$ is such that $(z)\lambda = \alpha_1$, then it is
possible that $(uv)\mu_z\not=(1_U)\mu_z$. 
So, if $w\in \stab_S(L_z^U)$ is such that
$(w)\mu_z=((uv)\mu_z) ^ {-1}$, then $\alpha_j \cdot vw = \alpha_1$ 
$(uvw)\mu_z = (1_U)\mu_z$. We have factorisations of $v$, since it was
obtained by tracing the reverse Schreier tree, and $w$, since it can be given
as a power of $uv$ ($u$ and $v$ are factorised), over the generators of $S$. 
Hence $vw$ is factorized over the generators of $S$, and it has the properties
required of $\ov{u_j}$ from above. 

We note that all the information required to factorise any element of $S$ is
returned by Algorithm~\ref{algorithm-enumerate} except the factorisation of
$(s)\mu_x$ in $S_{x}$. So, Algorithm~\ref{algorithm-factorization} is just
concerned with putting this information together. There is no guarantee that
the word produced by Algorithm~\ref{algorithm-factorization} is of minimal
length. 


\begin{algorithm}
  \caption{Factorise an element over the generators}
  \label{algorithm-factorization} 
  \begin{algorithmic}[1]
    \item[\textbf{Input:}] $S:=\genset{X}$ where $X:=\{x_1,\ldots, x_m\}$
      and $s\in S$

    \item[\textbf{Output:}] a word in the generators $X$ equal to $s$

    \item let $\theta:X^{+}\to S$ be the unique homomorphism extending the
      inclusion of $X$ in $S$
    
    \item suppose that $(S)\lambda:=\{(z_1)\lambda, \ldots, (z_K)\lambda\}$ for
      some $z_1, \ldots, z_K\in S$
      \Comment{Algorithm~\ref{algorithm-component}}
    
    \item let $\mathfrak{R}:= \{y_1, \ldots, y_r\}$ be the $\R$-representatives
      of $S$
      \Comment{Algorithm~\ref{algorithm-enumerate}}
    
    \item let $(V_2, \ldots, V_{r}, W_2, \ldots,
      W_{r})$ be the Schreier tree for $\mathfrak{R}$
      \Comment{Algorithm~\ref{algorithm-enumerate}}

    \item let $(K_1, \ldots, K_{r})$ denote the additional parameter
      return from Algorithm~\ref{algorithm-enumerate}

    \item find $y_i\in \mathfrak{R}$ such that $y_i\R^S s$
      \Comment{the modified version of Algorithm~\ref{algorithm-R-class-membership}}

    \item find a word $\omega_1\in X^+$ such that $(\omega_1)\theta=u\in S$
      where $(y_i)\lambda\cdot u=(s)\lambda$
      \Comment{factorise $u$ using Algorithm~\ref{algorithm-trace-schreier}}

    \item find $\ov{u}\in U ^ 1$ such that $(s)\lambda\cdot \ov{u}=(y_i)\lambda$, and
      $y_iu\ov{u}=y_i$
      \Comment{Assumption \eqref{lambda-inverse}}

    \item compute $(y_i's\ov{u})\mu_{y_i}\in S_{y_{i}}$
      \Comment{Assumption \eqref{lambda-perm}}
    
    \item find a word $\omega_2\in X^+$ such that $(\omega_2)\theta=y_i's\ov{u}$
      \Comment{factorise $s$}
    \item[] \Comment{factorise $(y_i's\ov{u})\mu_{y_i}$ over the generators of
      $S_{y_i}$ and then factorise these generators over $X$}
   
    \State $\omega_3:=\varepsilon$ (the empty word), $j=i$
      \Comment{trace the Schreier tree of $\mathfrak{R}$, factorise $y_i$}
    
    \While{$j>1$}
      \State find $\beta\in X^{+}$ such that
        $(z_{K_{j}})\lambda \cdot {(\beta)\theta}=(y_j)\lambda$

      \State set $\omega_3:=x_{W_j}\omega_3 \beta$ and $j:=V_j$

    \EndWhile\Comment{$(\omega_3)\theta=y_i$}

    \item \Return $\omega_3\omega_2\omega_1$
      \Comment{$(\omega_3\omega_2\omega_1)\theta = y_iy_i's\ov{u}u = s$}

  \end{algorithmic}
\end{algorithm}


\subsubsection*{The partial order of the $\J$-classes}

Recall that there is a partial order $\leq_{\J}$ on the $\J$-classes of a
semigroup $S$, which is induced by containment of principal two-sided ideals.
If $S$ is finite, then $\J = \D$, and so $\leq_{\J}$ is also a partial order on
the $\D$-classes of $S$.  More precisely, if $A$ and $B$ are $\J$-classes of
$S$, then we write $A\leq_{\J}B$ if $S^1aS^1\subseteq S^1bS^1$ for any (and
every) $a\in A$ and $b\in B$.

The penultimate algorithm in this paper allows us to calculate the partial
order of the $\D$-classes of $S$. This algorithm is based on \cite[Algorithm
Z]{Linton2002aa} and the following proposition which appears as Proposition 5.1 in
\cite{Lallement1990aa}. The principal differences between our algorithm and
Algorithm Z in \cite{Linton2002aa} are that our algorithm applies to classes of
semigroups other than transformation semigroups, and it takes advantage of
information already determined in Algorithm~\ref{algorithm-enumerate}.


\begin{prop}[cf. Proposition 5.1 in \cite{Lallement1990aa}]
  \label{prop-D-class-order}
  Let $S$ be a finite semigroup generated by a subset $X$, and let
  $D$ be a $\D$-class of $S$. If $R$ and $L$ are representatives of the $\R$-
  and $\L$-classes in $D$, then the set $XR\cup LX$ contains representatives for
  the $\D$-classes immediately below $D$ under $\leq_{\J}$.
\end{prop}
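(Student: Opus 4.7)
The plan is to show that every $\D$-class $D'$ immediately below $D$ under $\leq_{\D}$ admits a representative in $XR\cup LX$, which is precisely what the proposition asks for.

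First I would pick any $y\in D'$. Since $D'\leq_{\D} D$, there exist $d\in D$ and $s,t\in S^1$ with $y=sdt$. Writing $s$ and $t$ as (possibly empty) words in $X$ and peeling off one generator at a time produces a chain of partial products $d=d_0,d_1,\ldots,d_n=y$, in which each $d_i$ is obtained from $d_{i-1}$ by left or right multiplication by a single generator of $X$. Each such step gives $d_i\in S^1 d_{i-1}S^1$, so $D_{d_0},\ldots,D_{d_n}$ is a non-increasing sequence under $\leq_{\D}$. Since $y\in D'\neq D$, there is a first index $m\geq 1$ with $D_{d_m}\neq D$; then $d_{m-1}\in D$, $d_m\notin D$, and either $d_m=xd_{m-1}$ or $d_m=d_{m-1}x$ for some $x\in X$.

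Next I would invoke the immediacy assumption. Since $y$ arises from $d_m$ by further left/right multiplications by generators, $D'=D_y\leq_{\D} D_{d_m}<_{\D} D$. As no $\D$-class lies strictly between $D'$ and $D$, this forces $D_{d_m}=D'$. In the case $d_m=xd_{m-1}$, let $r\in R$ be the representative with $d_{m-1}\,\R^S\,r$; because $\R^S$ is a left congruence, $xd_{m-1}\,\R^S\,xr$, so $xr\in D_{d_m}=D'$ and $xr\in XR$. The case $d_m=d_{m-1}x$ is dual, using that $\L^S$ is a right congruence, and produces an element of $LX$ in $D'$. Either way $XR\cup LX$ meets $D'$, as required.

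The delicate point is the immediacy step: without it, the first strict decrease along the chain could land on a $\D$-class strictly below $D'$, and one would only obtain representatives for some $\D$-classes strictly below $D$. Once $m$ has been pinned down on $D'$, the replacement of $d_{m-1}$ by its representative in $R$ or $L$ is a routine application of the one-sided congruence properties of Green's relations.
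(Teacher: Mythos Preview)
Your argument is correct. The paper does not supply its own proof of this proposition; it simply cites it as Proposition~5.1 of \cite{Lallement1990aa} and proceeds to use it in Algorithm~\ref{algorithm-D-class-order}. So there is nothing to compare your approach against except the original reference.

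Your proof is the standard one: form a chain of one-generator multiplications from an element of $D$ down to an element of $D'$, locate the first step at which the $\D$-class drops, use immediacy to force that drop to land exactly on $D'$, and then replace the pre-drop element by its $\R$- or $\L$-representative via the appropriate one-sided congruence property. All the steps are sound; in particular, your observation that the ``delicate point'' is the immediacy hypothesis is exactly right---without it one would only know that $XR\cup LX$ meets \emph{some} $\D$-class strictly below $D$, not necessarily $D'$ itself.
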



We described above that we find the $\D$-classes (or representatives for the
$\D$-classes) of $S$ by finding the s.c.c.s of the orbit graph of the $\R$-class
representatives $\mathfrak{R}$ of $S$. Thus, after performing
Algorithm~\ref{algorithm-enumerate} and finding the s.c.c.s of the orbit graph
of $\mathfrak{R}$, we know both the $\D$-classes of $S$ and the $\R$-classes
contained in the $\D$-classes. In Algorithm~\ref{algorithm-enumerate}, we obtain
the $\R$-class representatives of $S$ by left multiplying existing
representatives. Therefore we have already found the information required to
determine the set $XR$ in Proposition~\ref{prop-D-class-order}. In particular,
we do not have to multiply the $\R$-class representatives of a $\D$-class by
each of the generators in Algorithm~\ref{algorithm-D-class-order}, or determine
which $\D$-classes correspond to the elements of $XR$. 

In Algorithm~\ref{algorithm-D-class-order} we represent the partial order of
the $\D$-classes $D_1, \ldots, D_n$ of $S$ as $P_1, \ldots, P_n$ where $P_i$
contains the indices of the $\D$-classes immediately below $D_i$ (and maybe
some more). The elements of $P_i$ are obtained by applying
Proposition~\ref{prop-D-class-order} to $D_i$. 


\begin{algorithm}
  \caption{The partial order of the $\D$-classes of a semigroup}
  \label{algorithm-D-class-order} 
  \begin{algorithmic}[1]
    \item[\textbf{Input:}] $S:=\genset{X}$ where $X:=\{x_1,\ldots, x_m\}$

    \item[\textbf{Output:}] the partial order of the $\D$-classes of $S$

    \item let $\mathfrak{R}$ denote the $\R$-representatives of $S$
      \Comment{Algorithm~\ref{algorithm-enumerate}}

    \item let $\Gamma:=\set{G_{i,j}}{1\leq i\leq |\mathfrak{R}|,\ 1\leq j\leq
      m}$ be the orbit graph of $\mathfrak{R}$
      \Comment{Algorithm~\ref{algorithm-enumerate}}

    \item find the $\D$-classes $D_1, \ldots, D_n$ of $S$ and $D_i\cap
      \mathfrak{R}$ for all $i$
      \Comment{find the s.c.c.s of $\Gamma$}

    \item let $P_i:=\varnothing$ for all $i$
      \Comment{initialise the partial order}

    \For{$i\in \{1,\ldots, n\}$}
     \Comment{loop over the $\D$-classes}
      \For{$y_j\in \mathfrak{R}\cap D_i$, $x_k\in X$} 
        \Comment{loop over: $\R$-classes of the $\D$-class, generators of $S$}

        \State find $l\in \{1,\ldots, n\}$ such that $y_{G_{j,k}}\in D_l$
        \Comment{use the orbit graph of $\mathfrak{R}$}
        
        \State $P_i\gets P_i\cup\{l\}$
      \EndFor

      \State find the $\L$-class representatives $\mathfrak{L}$ in $D_i$
        \Comment{the analogue of Algorithm~\ref{algorithm-R-classes-D-class},
        Proposition~\ref{prop-D-class-L-class-reps}}

      \For{$z_j\in \mathfrak{L}$, $x_k\in X$}
        \Comment{loop over: $\L$-classes of the $\D$-class, generators of $S$}
        
        \State find $l$ such that $z_jx_k\in D_l$
        \Comment{use the analogue of
        Algorithm~\ref{algorithm-R-class-membership} to find the 
        $\R$-rep.\ of $z_jx_k$}
        
        \State $P_i\gets P_i\cup\{l\}$
      \EndFor
    \EndFor
    \State \Return $P_1, \ldots, P_n$.
  \end{algorithmic}
\end{algorithm}


\subsubsection*{The closure of a semigroup and some elements}

The final algorithm (Algorithm~\ref{algorithm-closure}) we present describes a
method for taking the closure of a subsemigroup $S$ of $U$ with a set $V$ of
elements of $U$. The purpose of this algorithm is to reuse whatever
information is known about $S$ to make subsequent computations involving
$\genset{S, V}$ more efficient. 

At the beginning of this procedure we form $(\genset{S, V})\lambda$ by adding
the new generators $V$ to $(S)\lambda$. This can be achieved in \GAP using the
{\tt AddGeneratorsToOrbit} function from the \Orb package \cite{Muller2013aa},
which has the advantage that the existing information in $(S)\lambda$ is not
recomputed.  The orbit $(S)\lambda$ is extended by a breadth-first enumeration
with the new generators without reapplying the old
generators to existing values in $(S)\lambda$. 


\begin{algorithm}
  \caption{The closure of a semigroup and some elements}
  \label{algorithm-closure} 
  \begin{algorithmic}[1]
      
    \item[\textbf{Input:}] $S:=\genset{X}$ where $X:=\{x_1,\ldots, x_m\}$
      and any existing data structures for $S$, and $V\subseteq U$

    \item[\textbf{Output:}] data structures for the $\R$-classes with
      representatives $\mathfrak{R}$ in $\genset{S,V}$, the set
      $(\genset{S,V})\rho$, and Schreier trees and orbit graphs for
      $\mathfrak{R}$ and $(\genset{S,V})\rho$

    \item set $\widehat{\mathfrak{R}}:=\{\widehat{y_1}, \ldots,
      \widehat{y_{k}}\}$ to be the $\R$-class representatives of $S$
      \Comment{Algorithm~\ref{algorithm-enumerate}}

    \item set $\mathfrak{R}:=\{y_1:=\widehat{y_1}\}$, $M:=1$     
      \Comment{initialise the list of new $\R$-reps}
    
    \item define $(1)\iota = 1$
      \Comment{keep track of indices of old and new $\R$-reps}

    \item set $(\genset{S, V})\rho:=(S)\rho$, $N:=|(S)\rho|$
      \Comment{initialise $(\genset{S,V})\rho$}

    \item set the Schreier tree of $(\genset{S, V})\rho$ to be that of $(S)\rho$
      \Comment{initialise the Schreier tree of $(\genset{S,V})\rho$}
      
    \item set the orbit graph of $(\genset{S, V})\rho$ to be that of  
      $(S)\rho$
      \Comment{initialise orbit graph of $(\genset{S,V})\rho$}

    \item extend $(S)\lambda$ to $(\genset{S, V})\lambda$
      \Comment{as described above}

    \item find representatives $(z_1)\lambda, \ldots, (z_r)\lambda$ of the
      s.c.c.s of $(\genset{S,V})\lambda$

    \item find the groups $S_{z_i}$ for $i\in \{1,\ldots,r\}$
      \Comment{Algorithm~\ref{algorithm-schreier-generators-special}}

      \For{$x_j\widehat{y_k}v =\widehat{y_i}\in\widehat{\mathfrak{R}}$}
      \Comment{$j, k$ are obtained from the Schreier tree for
      $\widehat{\mathfrak{R}}$, $k < i$}


      \State find $n\in \{1,\ldots, r\}$ such that $(z_n)\lambda\sim
        (\widehat{y_i})\lambda$ (in $(\genset{S, V})\lambda$)
      
      \State find $u\in S ^ 1$ such that $(z_n)\lambda\cdot u=(\widehat{y_i})\lambda$
        \Comment{Algorithm~\ref{algorithm-trace-schreier}}

      \State find $\ov{u}\in U ^ 1$ such that $(\widehat{y_i})\lambda\cdot
      \ov{u}=(z_n)\lambda$ and $z_nu\ov{u}=z_n$
        \Comment{Assumption \eqref{lambda-inverse}}

      \For{$y_l\in \mathfrak{R}$ with $(y_l)\rho=(\widehat{y_i})\rho$ and
        $(y_l)\lambda=(z_n)\lambda$}

        \If{$(y_l'\widehat{y_i}\ov{u})\mu_{z_n}\in S_{z_n}=S_{y_l}$}
          \Comment{$\widehat{y_i}\R^{\genset{S,V}}y_l$}

            \State $G_{(k)\iota,j}:=l$
              \Comment{update the orbit graph of $\mathfrak{R}$}
            \State define $(i)\iota := l$
              \Comment{the old index $i$ is the new index $l$}
            \State go to line 10
        \EndIf
      \EndFor

      \State $M:=M+1$, $y_M:=\widehat{y_i}\ov{u}$,
        $\mathfrak{R}:=\mathfrak{R}\cup\{y_M\}$
        \Comment{add $\widehat{y_i}\ov{u}$ to $\mathfrak{R}$}

      \State $V_{M}:=(k)\iota$, $W_M:=j$
      \Comment{update the Schreier tree of $\mathfrak{R}$}
      
      \State $K_M:=\widehat{K_i}$
      \Comment{the index of $(x_j\widehat{y_k})\lambda$ in $(\genset{S,
      V})\lambda$ from Algorithm~\ref{algorithm-enumerate} applied to $S$}

      \State $G_{(k)\iota, j}:=M$ 
          \Comment{update the orbit graph of $\mathfrak{R}$}

        \State $(i)\iota := M$
          \Comment{the old index $i$ is the new index $M$}
    \EndFor

    \State $\Return$ apply Algorithm~\ref{algorithm-enumerate} to the
      data structures for $\genset{S, V}$ determined so far, and return the
      output
  \end{algorithmic}
\end{algorithm}


\subsection{Optimizations for regular and inverse semigroups}
\label{section-algorithms-regular}

Several of the algorithms presented in this section become more
straightforward if it is known \textit{a priori} that the subsemigroup $S$ of $U$
is regular. For example, the $\R$-classes of $S$ are just the $\R$-classes of
$U$ intersected with $S$, and so are in 1-1 correspondence with $(S)\rho$. 
The algorithms become simpler still under the assumption that $U$ is an
inverse semigroup since in this case we may define $(x)\rho=(x^{-1})\lambda$
for all $x\in U$ and so it is unnecessary to calculate $(S)\rho$ and
$(S)\lambda$ separately. 

The first algorithm where an advantage can be seen is
Algorithm~\ref{algorithm-R-classes-D-class}. Suppose that $S$ is a regular
subsemigroup of $U$, and $x\in S$.  Then, by Corollary~\ref{cor-regular},
$S_x=({}_xS)\Psi$ and so it is no longer necessary to compute $({}_xS)\Psi$ or
the cosets of $S_x\cap
({}_xS)\Psi$ in $({}_xS)\Psi$ in Algorithm~\ref{algorithm-R-classes-D-class}.  If
$U$ is an inverse semigroup with unary operation $^{-1}:x\mapsto x^{-1}$, then
Algorithm~\ref{algorithm-R-classes-D-class} becomes simpler still. In this case,
it is unnecessary to find the s.c.c.~of $(x)\rho$. This follows from the
observation that we may take $(x)\rho=(x^{-1})\lambda$, which implies that
$(S)\rho=(S)\lambda$ and  
$$u^{-1}\cdot (x^{-1})\rho=(u^{-1}x^{-1})\rho=(xu)\lambda=(x)\lambda\cdot u.$$
Similar simplifications can be made in
Algorithm~\ref{algorithm-D-class-membership}. 

The unary operation in the definition of $U$ means that given an inverse
subsemigroup $S$ of $U$ and $x\in S$, that we can find $x ^ {-1}$ without
reference to $S$. Or put differently, the inverse of $x$ is the same in every
inverse subsemigroup of $U$. For example, the symmetric inverse monoid has this
property, but the full transformation monoid does not. More precisely, there
exist distinct inverse subsemigroups $S$ and $T$ of $T_n$ and $x\in S\cap T$
such that the inverse of $x$ in $S$ is distinct from the inverse of $x$ in $T$. 

As noted above, in a regular subsemigroup $S$ of $U$, the $\R$-classes are in
1-1 correspondence with the elements of $(S)\rho$ and the $\L$-classes are in
1-1 correspondence with $(S)\lambda$. It follows that the search for $\R$-class
representatives in Algorithm~\ref{algorithm-enumerate} is redundant in this
case. Hence the $\R$-classes, $\L$-classes, $\H$-classes, size, and elements,
of a regular subsemigroup can be determined from $(S)\lambda$ and $(S)\rho$
using Algorithms~\ref{algorithm-trace-schreier}
and~\ref{algorithm-schreier-generators-special} alone.  The $\D$-classes of a
regular subsemigroup $S$ are then in 1-1 correspondence with the s.c.c.s of
$(S)\lambda$ (or $(S)\rho$). In the case that $U$ is an inverse semigroup, it
suffices to calculate either $(S)\lambda$ or $(S)\rho$, making these
computations simpler still.  The remaining algorithms in
Subsection~\ref{subsection-global} can also be modified to take advantage of
these observations, but due to considerations of space, we do not go
into the details here. 

The simplified algorithms alluded to in this section have been fully
implemented in the \Semigroups package for \GAP; see \cite{Mitchell2016aa}.


\section{Examples}\label{section-examples}

In this section we present some examples to illustrate the algorithms from
the previous section. 

One of the examples is that of a semigroup of partial permutations.  Similar to
permutations, a partial permutation can be expressed as a union of the
components of its action. Any component of the action of a partial permutation
$f$ is either a permutation with a single cycle, or a chain $[i\ (i)f\
(i)f^{2}\ \ldots\ (i)f^r]$ where $i\in \dom(f)\setminus \im(f)$ and $(i)f^r\in
\im(f)\setminus \dom(f)$, for some $r>1$.  For the sake of brevity, we will
use \textit{disjoint component} notation when writing a specific partial
permutation $f$, i.e.~we write $f$ as a juxtaposition of disjoint cycles and
chains. For example,
\begin{equation*}
  \begin{pmatrix}
     1& 2& 3& 4& 5& 6\\ 
     5& 4& -& 2& 6& -
  \end{pmatrix}
  =[1\ 5\ 6](2\ 4).
\end{equation*}
We include fixed points in the disjoint component notation for a partial
permutation $f$ so that it is possible to deduce the domain and image of $f$
from the notation, and so that the notation for $f$ is unique (up to the order
of the components, and the order of elements in a cycle); see
\cite{Lipscomb1996aa} for further details. 

Throughout this section, we will denote by $S$ the submonoid of the
symmetric inverse monoid on $\{1,\ldots, 9\}$ generated by 
\begin{equation}\label{equation-pp-example}
  x_1 = (1\ 4)(2\ 6)(3\ 8)(5)(7)(9), \quad x_2 = (1\ 5\ 4\ 2\ 7\ 6)(3\ 9\ 8)\  \quad
  x_3 = (2\ 5\ 6),                   \quad x_4 = (1\ 3\ 2),
\end{equation}
by $T$ the submonoid of the full transformation monoid on $\{1,\ldots,
5\}$ generated by 
\begin{equation}\label{equation-trans-example}
  x_1 = 
  \begin{pmatrix}
    1 & 2 & 3 & 4 & 5\\
    1 & 3 & 2 & 4 & 5
  \end{pmatrix},\quad
  x_2 = 
  \begin{pmatrix}
    1 & 2 & 3 & 4 & 5\\
    2 & 3 & 1 & 5 & 4
  \end{pmatrix},\quad
  x_3 =
  \begin{pmatrix}
    1 & 2 & 3 & 4 & 5\\
    1 & 3 & 3 & 2 & 2
  \end{pmatrix},
\end{equation} 
and we will use the notation of Sections~\ref{subsection-trans}
and~\ref{subsection-pperm}.

Since both $S$ and $T$ are submonoids, adjoining an identity to either would be
redundant, and so in the interest of saving space, we will not do this.


\subsection*{Components of the action}

Applying Algorithm~\ref{algorithm-component} to $S$ and $\alpha_1=(x_1)\lambda =
\{ 1, \ldots, 9 \}$, we obtain:
\begin{equation*}
  \begin{array}{rllllr}
     (S)\lambda = \{ 
                   & \alpha_1    = \{ 1, \ldots, 9 \}, 
                   & \alpha_2    = \{ 2, 5, 6 \}, 
                   & \alpha_3    = \{ 1, 2, 3 \}, 
                   & \alpha_4    = \{ 1, 4, 7 \}, \\
                   & \alpha_5    = \{ 1 \}, 
                   & \alpha_6    = \{ 4, 6, 8 \}, 
                   & \alpha_7    = \{ 5, 7, 9 \}, 
                   & \alpha_8    = \{ 5 \},       \\
                   & \alpha_9    = \varnothing, 
                   & \alpha_{10} = \{ 3 \}, 
                   & \alpha_{11} = \{ 4 \}, 
                   & \alpha_{12} = \{ 2 \},       \\
                   & \alpha_{13} = \{ 6 \}, 
                   & \alpha_{14} = \{ 8 \}, 
                   & \alpha_{15} = \{ 9 \}, 
                   & \alpha_{16} = \{ 7 \}  &
                 \}.
  \end{array}
\end{equation*}
The Schreier tree is:
\begin{equation*}
  \begin{array}{c|ccccccccccccccc}
    i   & 2 & 3 & 4 & 5 & 6 & 7 & 8 & 9 & 10 & 11 & 12 & 13 & 14 & 15 & 16 \\\hline
    v_i & 1 & 1 & 2 & 2 & 3 & 3 & 3 & 4 & 4  & 5  & 6  & 7  & 10 & 10 & 12 \\
    w_i & 3 & 4 & 2 & 4 & 1 & 2 & 3 & 3 & 4  & 1  & 3  & 3  & 1  & 2  & 2
  \end{array}
\end{equation*}
and the orbit graph is:
\begin{equation*}
  \begin{array}{c|cccccccccccccccc}
    i        & 1 & 2 & 3 & 4  & 5  & 6  & 7  & 8  & 9 & 10 & 11 & 12 & 13 & 14 & 15 & 16 \\ \hline
    g_{i, 1} & 1 & 2 & 6 & 4  & 11 & 3  & 7  & 8  & 9 & 14 & 5  & 13 & 12 & 10 & 15 & 16 \\ 
    g_{i, 2} & 1 & 4 & 7 & 2  & 8  & 3  & 6  & 11 & 9 & 15 & 12 & 16 & 5  & 10 & 14 & 13 \\
    g_{i, 3} & 2 & 2 & 8 & 9  & 9  & 12 & 13 & 13 & 9 & 9  & 9  & 8  & 12 & 9  & 9  & 9  \\
    g_{i, 4} & 3 & 5 & 3 & 10 & 10 & 9  & 9  & 9  & 9 & 12 & 9  & 5  & 9  & 9  & 9  & 9
  \end{array}.
\end{equation*}
Diagrams of the Schreier tree and orbit graphs of $(S)\lambda$ can be found
in Figure~\ref{1-ex-pp-lambda-orb}.
\begin{figure}
  \centering
    \begin{tikzpicture}[
      vertex/.style={circle, draw, fill=white, minimum size=0.65cm}, 
      edge/.style={arrows={-angle 90},thick}]

      \vertex{1}{5}{6} 
      \vertex{2}{1}{5}
      \vertex{3}{9}{5}
      \vertex{4}{1}{4}
      \vertex{5}{2}{2}
      \vertex{6}{8}{4}
      \vertex{7}{10}{4}
      \vertex{8}{5}{2}
      \vertex{9}{1}{3}
      \vertex{10}{5}{4}
      \vertex{11}{3}{0}
      \vertex{12}{8}{2}
      \vertex{13}{7}{0}
      \vertex{14}{6}{5}
      \vertex{15}{4}{5}
      \vertex{16}{9}{0}

      \edgethr{1}{2}{line width=0.35mm}
      \edgefou{1}{3}{line width=0.35mm}

      \edgetwo{2}{4}{bend left, line width=0.35mm}
      \edgefou{2}{5}{out=-150, in=150, line width=0.35mm}

      \edgeone{3}{6}{bend left, line width=0.35mm}
      \edgetwo{3}{7}{line width=0.35mm}
      \edgethr{3}{8}{bend right, line width=0.35mm}

      \edgetwo{4}{2}{bend left, dashed}
      \edgethr{4}{9}{line width=0.35mm}
      \edgefou{4}{10}{line width=0.35mm}

      \edgeone{5}{11}{bend right, line width=0.35mm}
      \edgetwo{5}{8}{bend left, line width=0.35mm, dashed}
      \edgefou{5}{10}{dashed}

      \edgeone{6}{3}{bend left, dashed}
      \edgetwo{6}{3}{dashed}
      \edgethr{6}{12}{line width=0.35mm}

      \edgetwo{7}{6}{dashed}
      \edgethr{7}{13}{bend left}

      \edgetwo{8}{11}{line width=0.35mm, dashed}
      \edgethr{8}{13}{dashed}

      \edgeone{10}{14}{bend left, line width=0.35mm}
      \edgetwo{10}{15}{line width=0.35mm}
      \edgefou{10}{12}{dashed}

      \edgeone{11}{5}{dashed}
      \edgetwo{11}{12}{bend right, dashed}

      \edgeone{12}{13}{bend left, dashed}
      \edgetwo{12}{16}{}
      \edgethr{12}{8}{dashed}
      \edgefou{12}{5}{bend right, dashed}

    \edgeone{13}{12}{bend left, dashed}
    \edgetwo{13}{5}{bend left, dashed}
    \edgethr{13}{12}{dashed}

    \edgeone{14}{10}{bend left, dashed}
    \edgetwo{14}{10}{dashed}

    \edgetwo{15}{14}{dashed}

    \edgetwo{16}{13}{dashed}

  \end{tikzpicture}

  \caption{The orbit graph of $(S)\lambda$ with loops and (all but one of
  the) edges to $\alpha_9$ omitted, and the Schreier tree indicated by solid
  edges.}\label{1-ex-pp-lambda-orb}
\end{figure}

The strongly connected components of $(S)\lambda$ are:
$$\{\{ \alpha_{1} \}, \{ \alpha_{2}, \alpha_{4} \}, \{ \alpha_{3}, \alpha_{6},
\alpha_{7} \}, \{ \alpha_{5}, \alpha_{8}, \alpha_{10}, \alpha_{11}, \alpha_{12},
\alpha_{13}, \alpha_{14}, \alpha_{15}, \alpha_{16} \}, 
\{ \alpha_{9} \}\}.$$

From the Schreier tree, we deduce that 
$$\alpha_1 = (x_1)\lambda,\quad \alpha_2 =
(x_1x_3)\lambda,\quad \alpha_3 = (x_1x_4)\lambda,\quad \alpha_5 =
(x_1x_3x_4)\lambda,\quad\text{and}\quad\alpha_9 = (x_1x_3x_2x_3)\lambda.$$
In this case, we set $\overline{s}=s^{-1}$ in Assumption
\eqref{lambda-inverse}. Using Algorithms~\ref{algorithm-trace-schreier}
and~\ref{algorithm-schreier-generators-special}, after removing redundant
generators, we obtain Schreier generators for the stabilisers of 
$x_1, x_1x_3, x_1x_4, x_1x_3x_4,$ and $x_1x_3x_2x_3$:
\begin{equation*}
  \begin{array}{lcllcl}
    S_{x_1}          & = & \genset{x_1, x_2} \cong D_{12}, & 
    S_{x_1x_3}       & = & \genset{(2\ 6), (2\ 6\ 5)} \cong \sym(\{2,5,6\}),\\
    S_{x_1x_4}       & = & \genset{(1\ 3\ 2), (1\ 2)} \cong \sym(\{1,2,3\}), &
    S_{x_1x_3x_4}    & = & \sym(\{1\})\cong \mathbf{1}, \\
    S_{x_1x_3x_2x_3} & = & \sym(\varnothing) \cong \mathbf{1},
\end{array}
\end{equation*}
where $\mathbf{1}$ denotes the trivial group.


Applying Algorithm~\ref{algorithm-component} to $T$ (defined in
\eqref{equation-trans-example}) and $(x_1)\lambda=\{ 1, 2, 3,
4, 5 \}$, we obtain:
\begin{equation*}
  \begin{array}{rllllr}
      (T)\lambda = \{ 
                   & \alpha_1 = \{ 1, 2, 3, 4, 5 \}, 
                   & \alpha_2 = \{ 1, 2, 3 \}, 
                   & \alpha_3 = \{ 1, 3 \},           
                   & \alpha_4 = \{ 1, 2 \},          \\
                   & \alpha_5 = \{ 2, 3 \}, 
                   & \alpha_6 = \{ 3 \}, 
                   & \alpha_7 = \{ 2 \},             
                   & \alpha_8 = \{ 1 \} &
                 \},
  \end{array}
\end{equation*}
the Schreier tree is:
\begin{equation*}
  \begin{array}{c|ccccccc}
    i   & 2 & 3 & 4 & 5 & 6 & 7 & 8 \\\hline
    v_i & 1 & 2 & 3 & 4 & 5 & 6 & 6 \\
    w_i & 3 & 3 & 1 & 2 & 3 & 1 & 2 
  \end{array}
\end{equation*}
and the orbit graph is:
\begin{equation*}
  \begin{array}{c|cccccccc}
    i        & 1 & 2 & 3 & 4 & 5 & 6 & 7 & 8 \\\hline
    g_{i, 1} & 1 & 2 & 4 & 3 & 5 & 7 & 6 & 8 \\ 
    g_{i, 2} & 1 & 2 & 4 & 5 & 3 & 8 & 6 & 7 \\
    g_{i, 3} & 2 & 3 & 3 & 3 & 6 & 6 & 6 & 8 \\
  \end{array}.
\end{equation*}
Diagrams of the Schreier tree and orbit graphs of $(T)\lambda$ can be found
in Figure~\ref{2-ex-trans-lambda-orb}.



  \begin{figure}
    \centering
    \begin{tikzpicture}[
      vertex/.style={circle, draw, fill=white, minimum size=0.65cm}, 
      edge/.style={arrows={-angle 90},thick}]

      \vertex{1}{0}{0} 
      \vertex{2}{2}{0} 
      \vertex{3}{4}{0} 
      \vertex{4}{6}{1} 
      \vertex{5}{6}{0} 
      \vertex{6}{8}{1} 
      \vertex{7}{10}{0} 
      \vertex{8}{8}{0} 
      
      \edgeone{3}{4}{bend left = 45, line width=0.35mm}
      \edgeone{4}{3}{bend right = 80, dashed}
      \edgeone{6}{7}{bend left = 45, line width=0.35mm}
      \edgeone{7}{6}{bend right = 80, dashed}
      
      \edgetwo{3}{4}{bend left = 15, dashed}
      \edgetwo{4}{5}{line width=0.35mm}
      \edgetwo{5}{3}{dashed}
      \edgetwo{6}{8}{line width=0.35mm}
      \edgetwo{7}{6}{bend left = 15, dashed}
      \edgetwo{8}{7}{dashed}

      \edgethr{1}{2}{line width=0.35mm}
      \edgethr{2}{3}{line width=0.35mm}
      \edgethr{4}{3}{bend left = 15, dashed}
      \edgethr{5}{6}{line width=0.35mm}
      \edgethr{7}{6}{bend right = 15, dashed}

    \end{tikzpicture}  
    \caption{The orbit graph of $(T)\lambda$ with loops omitted,
      and the Schreier tree indicated by solid edges.}
    \label{2-ex-trans-lambda-orb}

  \end{figure}

The strongly connected components of $(T)\lambda$ are:
$$\{\{ \alpha_{1} \}, \{ \alpha_{2} \}, \{ \alpha_{3}, \alpha_{4},
\alpha_{5} \}, \{ \alpha_{6}, \alpha_{7}, \alpha_{8} \}\}.$$

From the Schreier tree for $(T)\lambda$ and
Algorithms~\ref{algorithm-trace-schreier}, 
$$\alpha_1 = (x_1)\lambda,\quad \alpha_2 =
(x_1x_3)\lambda,\quad \alpha_3 =
(x_1x_3^2)\lambda,\quad\text{and}\quad\alpha_6 = (x_1x_3^2x_1x_2x_3)\lambda.$$
Using Algorithm~\ref{algorithm-schreier-generators-special}, after removing redundant
generators, we obtain Schreier generators for the stabilisers $T_y$ where 
$y = x_1, x_1x_3, x_1x_3^2, x_1x_3^2x_1x_2x_3$:
\begin{equation*}
  \begin{array}{lcllcl}
  T_{x_1}               & = & \genset{x_1, x_2} \cong D_{12}, & 
  T_{x_1x_3}            & = & \genset{(2\ 3), (1\ 2\ 3)} \cong \sym(\{1,2,3\}),\\
  T_{x_1x_3^2}          & = & \genset{(1\ 3)} \cong \sym(\{1,3\}), &
  T_{x_1x_3^2x_1x_2x_3} & = & \sym(\{3\})\cong \mathbf{1},
\end{array}
\end{equation*}
where $\mathbf{1}$ denotes the trivial group.


\subsection*{Individual Green's classes}

Let $S$ be partial permutation semigroup defined in
\eqref{equation-pp-example}.
The s.c.c.~of $(x_1x_3x_4)\lambda$ contains $9$ values:
$$\{ \alpha_{5}= \{1\}, \alpha_{8}=\{5\}, \alpha_{10}=\{3\},
     \alpha_{11}=\{4\}, \alpha_{12}=\{2\}, \alpha_{13}=\{6\}, 
     \alpha_{14}=\{8\}, \alpha_{15}=\{9\}, \alpha_{16}=\{7\} \},$$
and $|S_{x_1x_3x_4}| = 1$. It follows, by
Corollary~\ref{cor-collect}(b), that the size of the $\R$-class of $x_1x_3x_4$ in $S$
is $9 \cdot 1 = 9$. 

One choice for the Schreier tree (rooted at $\alpha_5 = (x_1x_3x_4)\lambda$)
of the s.c.c.~of $(x_1x_3x_4)\lambda$ is:
\begin{equation*}
  \begin{array}{c|ccccccccc}
    i   & 8  & 10 & 11 & 12 & 13 & 14 & 15 & 16 \\\hline
    v_i & 5  & 5  & 5  & 11 & 8  & 10 & 10 & 12 \\
    w_i & 2  & 4  & 1  & 2  & 3  & 1  & 2  & 2  \\
  \end{array}
\end{equation*}
A diagram of the Schreier tree can be found in
Figure~\ref{3-ex-pp-schreier-tree}.
%
%

  \begin{figure}
    \centering
    \begin{tikzpicture}[
      vertex/.style={circle, draw, fill=white, minimum size=0.65cm}, 
      edge/.style={arrows={-angle 90},thick}]

      \vertex{5}{1}{3} 
      \vertex{8}{0}{2} 
      \vertex{10}{2}{2} 
      \vertex{11}{1}{2} 
      \vertex{13}{0}{1} 
      \vertex{15}{3}{1} 
      \vertex{14}{2}{1} 
      \vertex{16}{1}{0} 
      \vertex{12}{1}{1} 
      
      \edgeone{5}{11}{}
      \edgeone{10}{14}{}
 
      \edgetwo{5}{8}{}
      \edgetwo{10}{15}{}
      \edgetwo{11}{12}{}
      \edgetwo{12}{16}{}

      \edgethr{8}{13}{}
      
      \edgefou{5}{10}{}

    \end{tikzpicture}  
    \qquad
    \renewcommand{\vertex}[3]{\node [vertex] (#1) at (#2, #3 * 1.7) {};
                              \node at (#2, #3 * 1.7) {$\beta_{#1}$};}
    \begin{tikzpicture}[
      vertex/.style={circle, draw, fill=white, minimum size=0.65cm}, 
      edge/.style={arrows={-angle 90},thick}]

      \vertex{1}{2}{3} 
      \vertex{2}{1}{2} 
      \vertex{3}{2}{2} 
      \vertex{4}{3}{2} 
      \vertex{5}{0}{1} 
      \vertex{6}{1}{1} 
      \vertex{7}{2}{1} 
      \vertex{8}{3}{1} 
      \vertex{9}{4}{1} 
      \vertex{10}{4}{0} 
      
      \edgeone{1}{2}{}
      \edgeone{3}{8}{}
      \edgeone{4}{9}{}

      \edgetwo{1}{3}{}
      \edgetwo{2}{5}{}
      \edgetwo{9}{10}{}
      
      \edgethr{2}{6}{}

      \edgefou{2}{7}{}
      \edgefou{1}{4}{}

    \end{tikzpicture}  
    \caption{Schreier trees for the strongly connected component of
    $(x_1x_3x_4)\lambda$ in $(S)\lambda$, and for $(x_1x_3x_4)\rho$ in
    $(S)\rho$.}
    \label{3-ex-pp-schreier-tree}
  \end{figure}

Since $x_1x_3x_4 = [2\ 1]$, using the Schreier tree and 
Algorithm~\ref{algorithm-R-class-elements}, the elements of $R_{x_1x_3x_4}^S$
are:
\begin{equation*}
  \begin{array}{rlcllcllcll}
    R_{x_1x_3x_4}^S = \{ 
                & x_1x_3x_4      & = & [2\ 1], 
                & x_1x_3x_4^3    & = & (2), 
                & x_1x_3x_4^2    & = & [2\ 3], \\ 
                & x_1x_3x_4x_1   & = & [2\ 4],  
                & x_1x_3x_4x_2   & = & [2\ 5], 
                & x_1x_3x_4^3x_1 & = & [2\ 6], \\ 
                & x_1x_3x_4^3x_2 & = & [2\ 7], 
                & x_1x_3x_4^2x_1 & = & [2\ 8], 
                & x_1x_3x_4^2x_2 & = & [2\ 9] 
                & \}, 
  \end{array}
\end{equation*}
Since $x_1x_3x_4^3$ is an idempotent, it also follows that $R_{x_1x_3x_3}^S$ is
a regular $\R$-class. 

We will calculate the $\R$-classes in $D_{x_1x_3x_4}^S$ using
Algorithm~\ref{algorithm-R-classes-D-class}. Since $(x_1x_3x_4)\rho = \{2\}$,
it follows immediately that ${}_{x_1x_3x_4}S=\{\id_{\{2\}}\}$ is trivial.  Set
$x = x_1x_3x_4$ and $x' = x^{-1} = [1\ 2]$. The embedding $\Psi: {}_xS\to
U_x$ (from Proposition~\ref{prop-main-2}(a)) defined by 
$$((s)\nu_x)\Psi=(x'sx)\mu_x$$
maps $\id_{\{2\}}$ to $\id_{\{1\}}$. Hence 
$S_x\cap ({}_xS)\Psi = S_x = \{\id_{\{1\}}\}$.  
It also follows from Proposition~\ref{prop-D-class-R-class-reps} that the
$\R^S$-class representatives of $D_x^S$ are in 1-1 correspondence with the
s.c.c.~of $(x)\rho = \{2\}$.  Using the left analogue of
Algorithm~\ref{algorithm-component}, we obtain 
\begin{equation*}
  \begin{array}{rllllll}
    S\cdot (x)\rho = \{ & \beta_1 = \{2\}, & \beta_2 = \{6\}, & \beta_3 = \{4\}, 
                        & \beta_4 = \{3\}, & \beta_5 = \{7\},\\
                        & \beta_6 = \{5\}, & \beta_7 = \varnothing, & \beta_8 = \{1\},
                        & \beta_9 = \{8\}, & \beta_{10} = \{9\} &\}
  \end{array}
\end{equation*}
with Schreier tree: 
\begin{equation*}
  \begin{array}{c|ccccccccc}
    i   & 2 & 3 & 4 & 5 & 6 & 7 & 8 & 9 & 10 \\\hline
    v_i & 1 & 1 & 1 & 2 & 2 & 2 & 3 & 4 & 9  \\
    w_i & 1 & 2 & 4 & 2 & 3 & 4 & 1 & 1 & 2  \\
  \end{array}
\end{equation*}
A diagram of the Schreier tree can be found in
Figure~\ref{3-ex-pp-schreier-tree}.
Hence the $\R^S$-class representatives of $D_x^S$ are:
\begin{equation*}
  \begin{array}{lcllcllcl}
               x_1x_3x_4 & = & [2\ 1], 
    &        x_1^2x_3x_4 & = & [6\ 1], 
    &       x_2x_1x_3x_4 & = & [4\ 1], \\
            x_4x_1x_3x_4 & = & [3\ 1], 
    &     x_2x_1^2x_3x_4 & = & [7\ 1], 
    &     x_3x_1^2x_3x_4 & = & [5\ 1], \\
         x_1x_2x_1x_3x_4 & = & (1), 
    &    x_1x_4x_1x_3x_4 & = & [8\ 1], 
    & x_2x_1x_4x_1x_3x_4 & = & [9\ 1].
  \end{array}
\end{equation*}
Since the number of $\R^S$-classes in $D_x^S$ is $9$ and each $\R^S$-class has
size $9$, it follows that $|D_x^S|= 81$. 

We will demonstrate how to use Algorithm~\ref{algorithm-R-class-membership} to
check if the partial permutation $y=[1\ 5][2\ 7][3\ 9]$ is $\R^S$-related to
either of the generators $x_3$ and $x_4$ of $S$. Since $(y)\rho = \{1,2,3\}$
and $(x_3)\rho = \{2,5,6\}$, it follows that $(y, x_3)\not\in \R^S$. 
However, $(y)\rho = (x_4)\rho$ and 
$$(y)\lambda = \{5,7,9\} = \alpha_7 \sim \alpha_3 = \{1,2,3\} =
  (x_4)\lambda.$$
Tracing the Schreier tree of $(S)\lambda$ from $\alpha_3$ to $\alpha_7$, we
obtain $u = x_2$ such that $(x_4)\lambda \cdot u = (y)\lambda$. It follows
that $\ov{u} = x_2 ^ {-1}$ has the property that $(y)\lambda \cdot \ov{u} =
(x)\lambda$. Also setting $x_4' = x_4 ^ {-1}$, it follows that $y \in R_x^S$
since
$$(x_4'y\ov{u})\mu_x = (x_4 ^ {-1} y x_2 ^ {-1})\mu_x = (1\ 2\ 3) \in S_{x_4} =
  S_{x_1x_4} = \sym(\{1,2,3\}).$$



\subsection*{The main algorithm}

We now determine the global structure of the transformation semigroup $T$
defined in \eqref{equation-trans-example}. We will do the same thing for the
partial permutation semigroup $S$ defined in \eqref{equation-pp-example} in the
next subsection.

If $x$ is a transformation of degree $n\in \N$, then the kernel $\ker(x)$ of $x$ is a
partition of $\{1,\ldots,n\}$. If the classes of $\ker(x)$ are $A_1, A_2,
\ldots, A_r$, for some $r$, then to avoid writing too many brackets we write 
$\ker(x)=\{A_1|\cdots|A_r\}$.

Applying Algorithm~\ref{algorithm-enumerate} to $T$ defined in
\eqref{equation-trans-example}, we find that
the $\R$-class representatives of $S$ are:
\begin{equation*}
  \begin{array}{rlcllcllcllcllcllclr}
    & y_1    & = & 1_T,               & 
      y_2    & = & x_3,               &     
      y_3    & = & x_2x_3,            &
      y_4    & = & x_3 ^ 2,           & \\
    & y_5    & = & x_1x_2x_3,         &  
      y_6    & = & x_3x_2x_3,         & 
      y_7    & = & x_2x_3 ^ 2,        &
      y_8    & = & x_3x_1x_2x_3,      & \\
    & y_9    & = & (x_2x_3) ^ 2,      &
      y_{10} & = & x_1x_2x_3 ^ 2,     &
      y_{11} & = & x_3 ^ 2 x_1x_2x_3, &
      y_{12} & = & x_1(x_2x_3) ^ 2,   &
  \end{array}
\end{equation*}
and that
 \begin{equation*}
   \begin{array}{rlcllcllcllclr}
     (T)\rho = \{ & (y_{1} )\rho & = &  \{ 1 | 2 | 3 | 4 | 5 \}, &
                    (y_{2} )\rho & = &  \{ 1 | 2, 3 | 4, 5 \} ,   &
                    (y_{3} )\rho & = &  \{ 1, 2 |3 | 4, 5 \} ,    & \\
                  & (y_{4} )\rho & = &  \{ 1 | 2, 3, 4, 5 \},    & 
                    (y_{5} )\rho & = &  \{ 1, 3| 2 |4, 5 \} ,     &
                    (y_{6} )\rho & = &  \{ 1, 4, 5 | 2, 3 \},    & \\
                  & (y_{7} )\rho & = &  \{ 1, 2, 4, 5 | 3 \},    & 
                    (y_{8} )\rho & = &  \{ 1, 2, 3 | 4, 5 \},    &
                    (y_{9} )\rho & = &  \{ 1, 2 | 3, 4, 5 \},    & \\
                  & (y_{10})\rho & = &  \{ 1, 3, 4, 5 | 2 \},    &
                    (y_{11})\rho & = &  \{ 1, 2, 3, 4, 5 \} ,     &
                    (y_{12})\rho & = &  \{ 1, 3 | 2, 4, 5 \}     &
              \}.
   \end{array}
\end{equation*}
The Schreier tree of the orbit graphs of $\mathfrak{R}$ and $(T)\rho$ are both
equal:
\begin{equation*}
  \begin{array}{c|ccccccccccc}
    i   & 2 & 3 & 4 & 5 & 6 & 7 & 8 & 9 & 10 & 11 & 12 \\\hline
    v_i & 1 & 2 & 2 & 3 & 3 & 4 & 5 & 6 & 7  & 8  & 9  \\
    w_i & 3 & 2 & 3 & 1 & 3 & 2 & 3 & 2 & 1  & 3  & 1  
  \end{array}
\end{equation*}
and the orbit graph is:
\begin{equation*}
  \begin{array}{c|cccccccccccc}
    i        & 1 & 2 & 3 & 4 & 5 & 6 & 7  & 8  & 9  & 10 & 11 & 12 \\\hline
    g_{i, 1} & 1 & 2 & 5 & 4 & 3 & 6 & 10 & 8  & 12 & 7  & 11 & 9  \\     
    g_{i, 2} & 1 & 3 & 5 & 7 & 2 & 9 & 10 & 8  & 12 & 4  & 11 & 6  \\
    g_{i, 3} & 2 & 4 & 6 & 4 & 8 & 4 & 6  & 11 & 6  & 8  & 11 & 8
  \end{array}.
\end{equation*}
A diagrams of the Schreier tree and orbit graph can be found
in Figure~\ref{3-ex-trans-rho-orb}. 

\renewcommand{\vertex}[3]{\node [vertex] (#1) at (#2, #3 * 1.7) {};
\node at (#2, #3 * 1.7) {$y_{#1}$};}


  \begin{figure}
    \centering
    \begin{tikzpicture}[
      vertex/.style={circle, draw, fill=white, minimum size=0.65cm}, 
      edge/.style={arrows={-angle 90},thick}]

      \vertex{1}{0}{4} 
      \vertex{2}{0}{3} 
      \vertex{3}{2}{3} 
      \vertex{4}{6}{3} 
      \vertex{5}{1}{2} 
      \vertex{6}{4}{3} 
      \vertex{7}{7}{2} 
      \vertex{8}{4}{1} 
      \vertex{9}{3}{2} 
      \vertex{10}{8}{3} 
      \vertex{11}{4}{0} 
      \vertex{12}{5}{2} 
      
      \edgeone{3}{5}{out=-80, in=30, line width=0.35mm}
      \edgeone{5}{3}{out=100, in=210, dashed}
      \edgeone{7}{10}{out=100, in=210, line width=0.35mm}
      \edgeone{9}{12}{bend left, line width=0.35mm}
      \edgeone{10}{7}{out=-80, in=30, dashed}
      \edgeone{12}{9}{bend left, dashed}
  
      \edgetwo{2}{3}{line width=0.35mm}
      \edgetwo{3}{5}{dashed}
      \edgetwo{4}{7}{line width=0.35mm}
      \edgetwo{5}{2}{dashed}
      \edgetwo{6}{9}{line width=0.35mm}
      \edgetwo{7}{10}{dashed}
      \edgetwo{9}{12}{dashed}
      \edgetwo{10}{4}{dashed}
      \edgetwo{12}{6}{dashed}

      \edgethr{1}{2}{line width=0.35mm}
      \edgethr{2}{4}{out=30, in=120, line width=0.35mm}
      \edgethr{3}{6}{line width=0.35mm}
      \edgethr{5}{8}{line width=0.35mm}
      \edgethr{6}{4}{dashed}
      \edgethr{7}{6}{dashed}
      \edgethr{8}{11}{line width=0.35mm}
      \edgethr{9}{6}{out=100, in=210, dashed}
      \edgethr{10}{8}{out=-60, in=0, dashed}
      \edgethr{12}{8}{dashed}

    \end{tikzpicture}  
    \caption{The orbit graph of $(T)\rho$ and $\mathfrak{R}$ with loops omitted,
    and the Schreier tree indicated by solid edges.}
    \label{3-ex-trans-rho-orb}
  \end{figure}

It is coincidentally the case that the $\R$-class representatives of $T$ are
obtained by left multiplying previous $\R$-class representatives by a
generator. In other words, the $u$ and $\ov{u}$ in line 8 and 9 of
Algorithm~\ref{algorithm-enumerate} are just the identity of $T$ in this
example.  Hence the additional parameters returned by
Algorithm~\ref{algorithm-enumerate} in this case are $(1, 2, 2, 3, 2, 3, 3, 3,
3, 3, 6, 3)$.

Recall that the strongly connected components of $(T)\lambda$ have
representatives:
$\alpha_1 = (x_1)\lambda               = \{1,\ldots, 5\}$, 
$\alpha_2 = (x_1x_3)\lambda            = \{1,2,3\}$,
$\alpha_3 = (x_1x_3^2)\lambda          = \{1,3\}$, 
$\alpha_6 = (x_1x_3^2x_1x_2x_3)\lambda = \{3\}$, 
and sizes: $1$, $1$, $3$, and $3$, respectively. We saw above that 
$|T_{x_1}| = 12$, $|T_{x_1x_3}| = 6$, $|T_{x_1x_3^2}| = 2$, and
$|T_{x_1x_3^2x_1x_2x_3}| = 1$.
It follows from Corollary~\ref{cor-collect}(b) and (c) that
\begin{eqnarray*}
  |T| & = & |R_{x_1} ^ T| + 3 | R_{x_1x_3} ^ T| + 7 | R_{x_1x_3 ^ 2} ^ T | +
            |R_{x_1x_3 ^ 2 x_1x_2x_3} ^ T| \\
      & = & (1\cdot 1\cdot 12) + (3\cdot 1 \cdot 6) + 
            (7 \cdot 3 \cdot 2) + (1 \cdot 3 \cdot 1) = 75.
\end{eqnarray*}
The orbit graph of $\mathfrak{R}$ has 5 strongly connected components:
\begin{equation*}
  \begin{array}{l}
  \{y_1  =  x_1\},\qquad \{y_2  =  x_3, \quad y_3 = x_2x_3, y_5  =  x_1x_2x_3\},\\
  \{y_4  =  x_3^2, y_6  =  x_3x_2x_3, y_7  =  x_2x_3^2, y_9  =  (x_2x_3) ^ 2, 
      y_{10}  =  x_1x_2x_3 ^ 2, y_{12}  =  x_1(x_2x_3) ^ 2\},\\
    \{y_8  =  x_3x_1x_2x_3\},\qquad \{y_{11}  =  x^3x_1x_2x_3\} 
  \end{array}
\end{equation*}
and so there are five $\D$-classes in $T$.



\subsection*{Optimizations for inverse semigroups}

The semigroup $S$ defined in \eqref{equation-pp-example} is an inverse
semigroup, since the inverses of the generators can be obtained by taking
powers. From Section~\ref{section-algorithms-regular}, it follows that the
$\R$-class representatives of $S$ are in 1-1 correspondence with the values in
$(S)\lambda$ and that $(S)\rho = (S)\lambda$. Hence the number of $\R$-classes
in $S$ is $16$, and by tracing the Schreier tree of $(S)\lambda$, the $\R$-class
representatives are:
\begin{equation*}
  \begin{array}{rlcllcllcllcllcllclr}
    & y_1    & = & x_1,               & 
      y_2    & = & x_1x_3,            &     
      y_3    & = & x_1x_4,            &
      y_4    & = & x_1x_3x_2,         & \\
    & y_5    & = & x_1x_3x_4,         &  
      y_6    & = & x_1x_4x_1,         & 
      y_7    & = & x_1x_4x_2,         &
      y_8    & = & x_1x_4x_3,         & \\
    & y_9    & = & x_1x_3x_2x_3,      &
      y_{10} & = & x_1x_3x_2x_4,      &
      y_{11} & = & x_1x_3x_4x_1,      &
      y_{12} & = & x_1x_4x_1x_3,      & \\
    & y_{13} & = & x_1x_4x_2x_3,      &
      y_{14} & = & x_1x_3x_2x_4x_1,   &
      y_{15} & = & x_1x_3x_2x_4x_2,   &
      y_{16} & = & x_1x_4x_1x_3x_2.   
  \end{array}
\end{equation*}
The strongly connected components of $(S)\lambda$ are in 1-1 correspondence
with the $\D$-classes of $S$, and so $S$ has five $\D$-classes. Representatives
of $\L$-classes can be obtained by taking the inverses of the $\R$-class
representatives $\mathfrak{R}$. 

It follows from Corollary~\ref{cor-collect}(c) that
$$|S| = (1 ^ 2 \cdot 12) + (2 ^ 2 \cdot 6) + (3 ^ 2 \cdot 6) + (9 ^ 2 \cdot 1)
+ 1 = 172.$$


\subsection*{Testing membership}

In this subsection, we will use Algorithm~\ref{algorithm-membership} to test if
the following transformations belong to the semigroup $T$ defined in
\eqref{equation-trans-example}:
\begin{equation*}
  x = 
  \begin{pmatrix}
    1 & 2 & 3 & 4 & 5 \\
    1 & 2 & 3 & 3 & 1
  \end{pmatrix}\qquad \text{and}\qquad
  y = 
  \begin{pmatrix}
    1 & 2 & 3 & 4 & 5 \\
    2 & 3 & 3 & 2 & 2
  \end{pmatrix}.
  \quad
\end{equation*}
Although $(x)\lambda = \{1,2,3\} = \alpha_2 \in (T)\lambda$, 
$(x)\rho = \{1,5|2|3,4\} \not\in (T)\rho$, and so $x\not\in T$. 

Firstly, 
$$(y)\lambda = \{2,3\} = \alpha_5 \in (T)\lambda$$
and so the representative of the s.c.c.~of $(y)\lambda$, which we chose above,
is $\alpha_3$. Tracing the Schreier tree for $(T)\lambda$ from $\alpha_5$ back
to $\alpha_3$, using Algorithm~\ref{algorithm-trace-schreier}, we find 
$u = x_1x_2$ such that $\alpha_3 \cdot u = \alpha_5$. Since $x_1$, $x_2$ are
permutations, $\ov{u} = x_2 ^ {-1} x_1 ^ {-1}$ has the property that 
$\alpha_5 \cdot \ov{u} = \alpha_3$ and $yu\ov{u} = y$.
The $\R$-class representative $y_6 \in \mathfrak{R}$ is the only one such that
$(y_6)\lambda = \alpha_3 = (y)\lambda \cdot \ov{u}$ and 
$(y_6)\rho = \{1,4,5|2,3\} = (y)\rho$. Thus to check that $y\in T$, it suffices
to show that the permutation $(y_6'y\ov{u})\mu_x$ belongs to the group
$T_{y_6} = T_{x_1x_3^2} = \sym(\{1,3\})$. We know that $(y_6'y\ov{u})\mu_x$ is
a permutation on $\{1,3\}$, and so it must belong to $T_{y_6}$, and so $y \in
S$. 


\subsection*{Factorization}

In the previous subsection we showed that 
\begin{equation*}
  y = 
  \begin{pmatrix}
    1 & 2 & 3 & 4 & 5 \\
    2 & 3 & 3 & 2 & 2
  \end{pmatrix}
\end{equation*}
is an element of $T$ defined in \eqref{equation-trans-example}. In this
subsection, we will show how to use Algorithm~\ref{algorithm-factorization} to
factorize $y$ as a product of the generators of $T$. Recall that we will write 
$y = xsu$, where $x\in \mathfrak{R}$, $u\in S$ is such that $(x)\lambda\cdot u
= (y)\lambda$, and $s = x'y\ov{u}$, and that we factorise each of $x, s, u$
separately. From the previous subsection, the chosen $\R$-class representative
for $y$ is: 
\begin{equation*}
  x = y_6 = x_3x_2x_3 = 
  \begin{pmatrix}
    1 & 2 & 3 & 4 & 5 \\
    3 & 1 & 1 & 3 & 3
  \end{pmatrix}, 
\end{equation*}
and one choice for $x'\in T_5$ is:
\begin{equation*}
  x' =
  \begin{pmatrix}
    1 & 2 & 3 & 4 & 5 \\
    2 & 2 & 1 & 4 & 5
  \end{pmatrix}.
\end{equation*}
From the previous subsection, $u = x_1x_2$ and $\ov{u} = x_2 ^ {-1} x_1 ^
{-1}$. It follows that 
\begin{equation*}
  s = x'y\ov{u} = 
  \begin{pmatrix}
    1 & 2 & 3 & 4 & 5 \\
    2 & 2 & 1 & 4 & 5
  \end{pmatrix}
  \begin{pmatrix}
    1 & 2 & 3 & 4 & 5 \\
    2 & 3 & 3 & 2 & 2
  \end{pmatrix}
  \begin{pmatrix}
    1 & 2 & 3 & 4 & 5 \\
    2 & 1 & 3 & 4 & 5
  \end{pmatrix}
  = 
  \begin{pmatrix}
    1 & 2 & 3 & 4 & 5 \\
    3 & 3 & 1 & 1 & 1
  \end{pmatrix}
\end{equation*}
and so $(s)\mu_x = s|_{\im(x)} = s|_{\{1,3\}} = (1\ 3)$, which is the only
generator of $T_x$. From Algorithm~\ref{algorithm-schreier-generators-special},
one choice for $s$ such that $s|_{\im(x)} = (1\ 3)$ is $x_3^2 x_1 x_2^2$. Hence
$$y = xsu = x_3x_2x_3\cdot x_3^2 x_1 x_2^2 \cdot x_1x_2 = x_3 x_2 x_3^3 x_1 x_2
^2 x_1 x_2.$$

Note that $x_3x_2x_3x_2^2$ is a minimal length word in the generators that is
equal to $y$. 


\subsection*{The $\D$-class structure}

We showed above that the partial permutation semigroup $S$ defined in
\eqref{equation-pp-example} has five $\D$-classes $D_1$, $D_2$, $D_3$, $D_4$,
and $D_5$ with representatives $x_1$, $x_1x_3$, $x_1x_4$, $x_1x_3x_4$, and
$x_1x_3x_2x_3$, respectively. 
The $\D$-class $D_1$ has only one $\R$-class and one $\L$-class.  If we left
multiply the unique $\R$-class representative $x_1$ of $D_1$ by the generators
of $S$, then we obtain the $\R$-class representatives: 
$$x_1^2 \D^S x_1, \quad x_2x_1 \D^S x_1, \quad x_3x_1 = (2\ 5)(6) \D^S x_1x_3,
  \quad x_4x_1 = [1\ 8][2\ 4][3\ 6] \D^S x_1x_4$$
and so $D_2, D_3\leq_{\J} D_1$.  Note that, since $S$ is inverse, we have not
performed Algorithm~\ref{algorithm-enumerate}, and so we have not previously
left multiplied the $\R$-representatives of $S$ by its generators. 

Right multiplying the unique $\L$-class representative $x_1$ of $D_1$ by the
generators of $S$ we obtain:
$$
x_1^2 \D^S x_1, \quad x_1x_2 \D^S x_1, \quad x_1x_3 = (2)(5\ 6) \D^S x_1x_3,
  \quad x_1x_4 = [4\ 3][6\ 1][8\ 2] \D^S x_1x_4,
$$
which yields no additional information.

Continuing in this way, we obtain the partial order of the $\D$-classes of $S$.
A picture of the egg-box diagrams of the $\D$-classes of $S$ and the partial
order of $\D$-classes of $S$ can be seen in Figure~\ref{figure-D-order}. 
An analogous computation can be used to find the partial order of the
$\D$-classes of the transformation semigroup $T$ and this is
included in Figure~\ref{figure-D-order}.

\begin{figure}
  \begin{center}
    \includegraphics[width=150px]{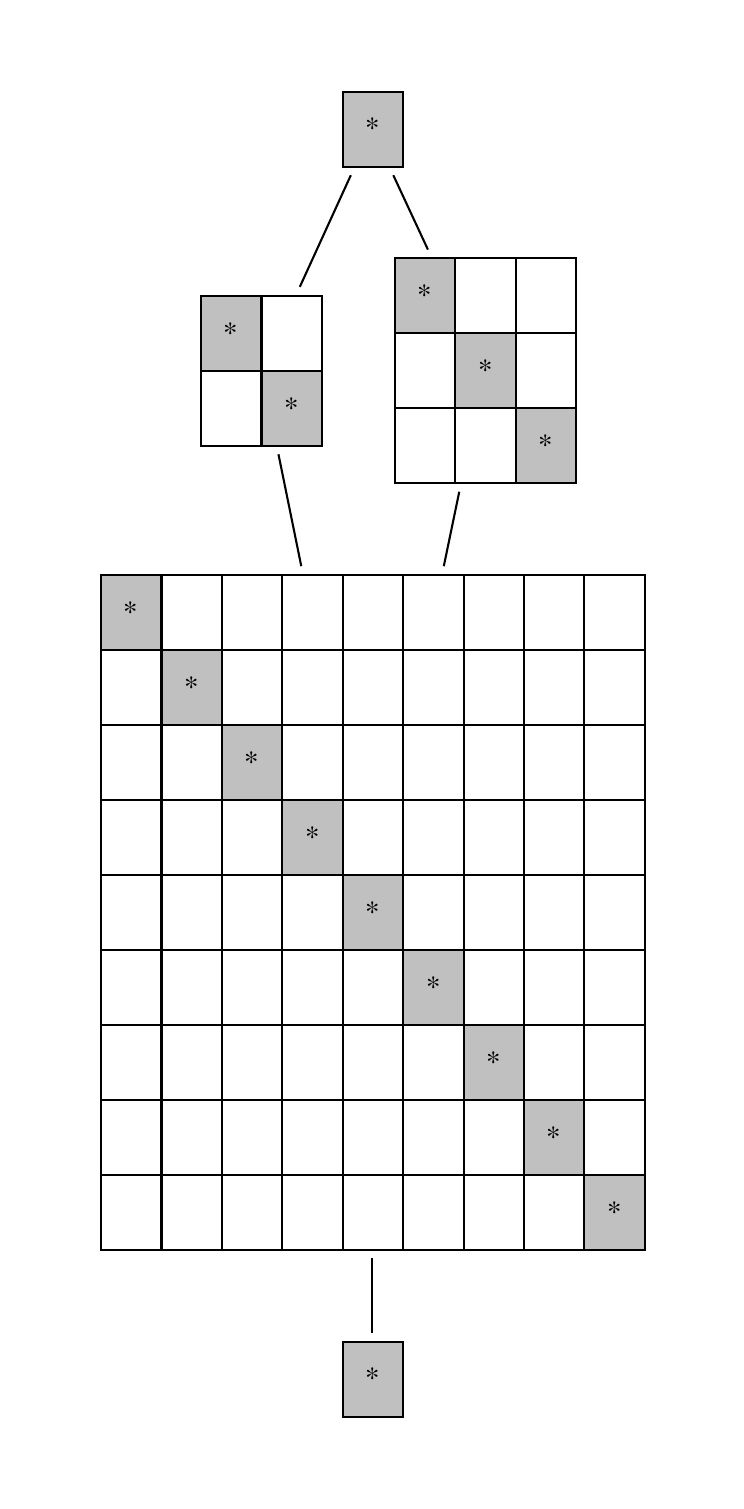}
    \includegraphics[width=95px]{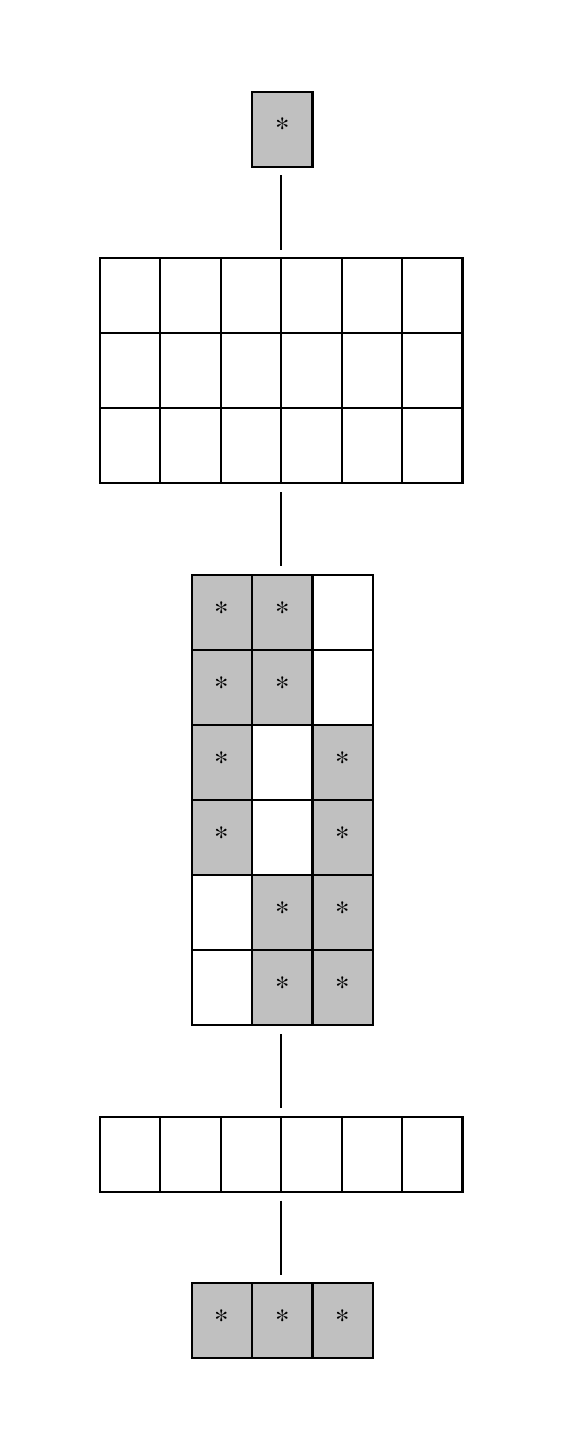}
  \end{center}
  \caption{The partial order of the $\D$-classes of $S$ (left) and $T$ (right),
  group $\H$-classes indicated by shaded boxes.}
  \label{figure-D-order}
\end{figure}


\section{Experimental results}\label{section-benchmarks}
In this section we compare
Algorithm~\ref{algorithm-enumerate}, the optimized variants of
Algorithm~\ref{algorithm-enumerate} for regular and inverse semigroups
(described in Section~\ref{section-algorithms-regular}) when they apply, and the
Froidure-Pin algorithm~\cite{Froidure1997aa} when computing the size of various
standard examples; see Figures~\ref{fig-bench-partition-monoid},
\ref{fig-bench-full-trans}, \ref{fig-bench-symmetric-inv},
\ref{fig-bench-order}, and~\ref{fig-bench-unitriangle}.  
The three variants of Algorithm~\ref{algorithm-enumerate}
used for this comparison are implemented in the \GAP~\cite{GAP4} package
\Semigroups~\cite{Mitchell2016aa}, and the Froidure-Pin algorithm is
implemented in \libsemigroups~\cite{Mitchell2016ab}. The latter is a highly
optimized C++ library, whereas the former is written largely in the high-level
\GAP programming language. If the three variants of
Algorithm~\ref{algorithm-enumerate} were also implemented in C/C++, then a
further improvement in performance could be expected.

For the partition monoid, full transformation monoid, and symmetric
inverse monoid, each of which contain large subgroups, the performance of
Algorithm~\ref{algorithm-enumerate} is considerably better than that of the
Froidure-Pin algorithm. The monoid of order-preserving transformations on
$\{1, \ldots, n\}$ is $\H$-trivial, but far from $\J$-trivial, and
Algorithm~\ref{algorithm-enumerate} is still faster than the Froidure-Pin
algorithm in this case, albeit by a lesser margin. The monoid of unitriangular
boolean matrices is $\J$-trivial, and in Figure~\ref{fig-bench-unitriangle}, we
see that, as expected, the Froidure-Pin algorithm outperforms our algorithms.

All of the computations in this section were performed using a 3.1 GHz Intel
Core i7 processor with 16GB of RAM. 

\begin{figure}
  \centering
  \begin{tabular}{l|r|r|r|r}
    $n$ & $|P_n|$ & Algorithm~\ref{algorithm-enumerate} &
    Algorithm~\ref{algorithm-enumerate} (regular) & Froidure-Pin \\\hline
  4  &         4\ 140 &      11&      7&      6 \\
  5  &       115\ 975 &      30&     21&    174 \\
  6  &      4\ 213\ 597 &     135&     97&   7\ 621 \\
  7  &    190\ 899\ 322 &     817&    576 &   -    \\
  8  &  10\ 480\ 142\ 147 &    5\ 985&   4\ 102 &   -     \\
    9  & 682\ 076\ 806\ 159 &   73\ 390&  29\ 852 & -
  \end{tabular}
    \caption{Time in milliseconds to find the size of the partition monoid $P_n$.}
    \label{fig-bench-partition-monoid}
\end{figure}

\begin{figure}
  \centering
  \begin{tabular}{l|r|r|r|r}
    $n$ & $|T_n|$ & Algorithm~\ref{algorithm-enumerate} &
    Algorithm~\ref{algorithm-enumerate} (regular) & Froidure-Pin \\\hline
    5  &          3\ 125   &          6&      5&      2 \\
    6  &         46\ 656   &         13&      9&     62 \\
    7  &        823\ 543   &         35&     20&    989 \\
    8  &      16\ 777\ 216   &        130&     79&  21\ 430 \\
    9  &     387\ 420\ 489   &        691&    374 &  -    \\
    10 &   10\ 000\ 000\ 000   &        4\ 135&   2\ 359 & -    \\
    11 & 285\ 311\ 670\ 611 & 31\ 699 & 15\ 227 & -
  \end{tabular}
    \caption{Time in milliseconds to find the size of the full transformation
    monoid $T_n$.}
    \label{fig-bench-full-trans}
\end{figure}

\begin{figure}
  \centering
  \begin{tabular}{l|r|r|r|r}
    $n$ & $|I_n|$ & Algorithm~\ref{algorithm-enumerate} &
    Algorithm~\ref{algorithm-enumerate} (inverse) & Froidure-Pin \\\hline
        6&            13\ 327 &   10 &    7 &     10  \\
        7&           130\ 922 &   14 &   10 &    151  \\
        8&          1\ 441\ 729 &   24 &   14 &   2\ 137  \\
        9&         17\ 572\ 114 &   33 &   20 &  28\ 151  \\
       10&        234\ 662\ 231 &   57 &   33  & -      \\
       11&       3\ 405\ 357\ 682 &  110 &   54  & -      \\
       12&      53\ 334\ 454\ 417 &  210 &   97  & -      \\
       13&     896\ 324\ 308\ 634 &  418 &  180  & -      \\
       14&   16\ 083\ 557\ 845\ 279 &  858 &  351  & -      \\
       15&  306\ 827\ 170\ 866\ 106 & 1\ 748 &  710  & - 
  \end{tabular}
    \caption{Time in milliseconds to find the size of the symmetric inverse
    monoid $I_n$.}
    \label{fig-bench-symmetric-inv}
\end{figure}

\begin{figure}
  \centering
  \begin{tabular}{l|r|r|r|r}
    $n$ & $|O_n|$ & Algorithm~\ref{algorithm-enumerate} &
    Algorithm~\ref{algorithm-enumerate} (regular) & Froidure-Pin \\\hline
    6&        462 &      8&      7&      1\\
  7&       1\ 716   &     14&     14&      2\\ 
  8&       6\ 435   &     29&     26&     10\\ 
  9&      24\ 310   &     61&     59&     28\\ 
 10&      92\ 378   &    141&    109&    145\\ 
 11&     352\ 716   &    291&    242&    611\\ 
 12&    1\ 352\ 078   &    631&    523&   2\ 639\\ 
 13&    5\ 200\ 300   &   1\ 380&   1\ 116&  11\ 074\\ 
 14&   20\ 058\ 300   &   3\ 120&   2\ 542&  45\ 532\\ 
    15&   77\ 558\ 760   &   7\ 246&   5\ 649& -    
  \end{tabular}
    \caption{Time in milliseconds to find the size of the monoid $O_n$ of
    order-preserving transformations on $\{1, \ldots, n\}$.}
    \label{fig-bench-order}
\end{figure}

\begin{figure}
  \centering
  \begin{tabular}{l|r|r|r|r}
    $n$ & $|U_n|$ & Algorithm~\ref{algorithm-enumerate} &
   Froidure-Pin \\\hline 
    3 &        8 &        32& 0  \\
 4 &       64    &        593& 1  \\
    5 &     1\ 024    &      -&  215  \\
 6 &    32\ 768    &        - & 7\ 820  
  \end{tabular}
    \caption{Time in milliseconds to find the size of a transformation
    semigroup $U_n$ isomorphic to the monoid of $n\times n$ unitriangular boolean
    matrices.}
    \label{fig-bench-unitriangle}
\end{figure}


\section*{Acknowledgements}
\addcontentsline{toc}{section}{Acknowledgements}
The authors would like to thank Wilf Wilson for his careful reading of
several early drafts of this paper, and for the many improvements he suggested.
We would also like to thank the two anonymous referees for their comments, in
particular, for pointing out the ``identity crisis'' in the submitted version of
the paper. 


\bibliography{citrus}{}
\bibliographystyle{plain}

\end{document}